\newcommand\redout{\bgroup\markoverwith
{\textcolor{red}{\rule[.5ex]{2pt}{0.4pt}}}\ULon}
\tikzset{%
  highlight/.style={rectangle,rounded corners,fill=red!15,draw,
    fill opacity=0.5,thick,inner sep=0pt}
}
\newcolumntype{x}[1]{>{\centering\arraybackslash\hspace{0pt}}p{#1}}
\theoremstyle{definition}
\newtheorem{theorem}{Theorem}[section]
\newtheorem{definition}[theorem]{{{Definition}}}
\newtheorem{example}[theorem]{{{Example}}}
\newtheorem{notation}[theorem]{{{Notation}}}
\newtheorem{remark}[theorem]{{{Remark}}}
\newtheorem{corollary}[theorem]{{{Corollary}}}
\newtheorem{proposition}[theorem]{{{Proposition}}}
\newtheorem{lemma}[theorem]{{{Lemma}}}
\newcommand{\C}{\mathcal C}
\newcommand{\mD}{\mathcal{D}}
\newcommand{\mM}{\mathcal{M}}
\newcommand{\mL}{\mathcal{L}}
\newcommand{\mU}{\mathcal{U}}
\newcommand{\mF}{\mathcal{F}}
\newcommand{\mO}{\mathcal{O}}
\newcommand{\mI}{\mathcal{I}}
\newcommand{\mZ}{\mathcal{Z}}
\newcommand{\mS}{\mathcal{S}}
\newcommand{\mN}{\mathcal{N}}
\newcommand{\mH}{\mathcal{H}}
\newcommand{\mP}{\mathcal{P}}
\newcommand{\F}{\mathbb F}
\newcommand{\R}{\mathbb R}
\newcommand{\la}{\langle}
\newcommand{\ra}{\rangle}
\newcommand{\rRP}{\bar{\mathcal{P}}_q^n}
\newcommand{\RP}{\mathcal{P}_q^n}
\newcommand\qqbin[2]{\left[\begin{matrix} #1 \\ #2 \end{matrix} \right]}
\DeclareMathOperator{\rk}{rk}
\DeclareMathOperator{\Cl}{Cl}
\DeclareMathOperator{\cl}{cl}
\DeclareMathOperator{\conv}{conv}
\DeclareMathOperator{\ver}{Vert}
\title{The polytope of all $q$-rank functions}
\author[G. N. Alfarano]{Gianira N. Alfarano$^{1}$}
\address{$^1$\textnormal{Université de Rennes, IRMAR, Rennes, France.}}
 \email{gianira-nicoletta.alfarano@univ-rennes.fr}
 \author[S. Degen]{Sebastian Degen$^2$}
\address{$^2$\textnormal{Universit\"{a}t Bielefeld, Fakult\"{a}t f\"{u}r Mathematik, Bielefeld, Germany.}}
 \email{sdegen@math.uni-bielefeld.de}
\begin{document}
\begin{abstract}
    A $q$-rank function is a real-valued function defined on the subspace lattice that is non-negative, upper bounded by the dimension function, non-decreasing, and satisfies the submodularity law. Each such function corresponds to the rank function of a $q$-polymatroid. In this paper, we identify these functions with points in a polytope. We show that this polytope contains no interior lattice points, implying that the points corresponding to $q$-matroids are among its vertices. We investigate several properties of convex combinations of two lattice points in this polytope, particularly in terms of independence, flats, and cyclic flats. Special attention is given to convex combinations of paving and uniform $q$-matroids.
\end{abstract}

\maketitle
\noindent {\bf Keywords.} polytopes, $q$-rank function, $q$-polymatroids, rank-metric codes.

\section{Introduction}
The concept of matroid abstracts the notion of independence in vector spaces and graphs. Matroids have been extended in various directions, including polymatroids~\cite{edmonds2003submodular}, valuated matroids~\cite{dress1992valuated}, and, more recently, latroids~\cite{vertigan2004latroids, gorla2025latroids} and $q$-analogues such as $q$-matroids~\cite{jurrius2018defining}, $q$-polymatroids~\cite{gorla2019rank, shiromoto2019codes}, and $\mL$-polymatroid~\cite{alfarano2023critical, byrne2023cyclic}.

In the classical setting, let $S$ be a finite set of cardinality $n$. The rank function of a polymatroid is a real-valued function defined on all subsets of $S$ that is non-decreasing, submodular, and upper bounded by the cardinality function. The cone of all such functions has been studied in \cite{nguyen1978semimodular}, particularly to characterize its extremal rays and extremal polymatroids. Further developments appear in \cite{girlich1995cone, kashiwabara2000extremality}, and analogous investigations have been conducted for supermodular functions; see \cite{grabisch2019cone}.

In this paper, we focus on \emph{$q$-rank functions}; i.e., rank functions of $q$-polymatroids. Let $\mathbb{F}_q$ be the finite field of order $q$, let $E$ be an $n$-dimensional vector space over $\mathbb{F}_q$, and let $\mathcal{L}(E)$ be the lattice of subspaces of $E$ ordered by inclusion. A $q$-rank function is defined on $\mathcal{L}(E)$ and is real-valued, non-negative, upper-bounded by the dimension function, non-decreasing, and submodular. If, in particular, such a function is integer-valued, it is the rank function of a $q$-matroid. We define the \emph{polytope of all $q$-rank functions}, $\mathcal{P}_q^n \subseteq \mathbb{R}^{|\mathcal{L}(E)|}$, as the polytope defined by the linear inequalities given by the above properties. More specifically, by ordering the spaces in $\mathcal{L}(E)$, we see that a point $(p_X)_{X \in \mathcal{L}(E)} \in \mathcal{P}_q^n$ has coordinates indexed by the spaces in $\mathcal{L}(E)$, and the entries are precisely the values that a $q$-rank function takes on each $X \in \mathcal{L}(E)$.
We show that $\mathcal{P}_q^n$ has dimension $|\mL(E)| - 1$; see \Cref{prop:dimension}. However, as this dimension grows rapidly with $n$, analyzing its geometry becomes challenging. One of our main results in this direction is that the integer points (lattice points) are vertices of $\mathcal{P}_q^n$; see \Cref{thm:integer_vertices}. 
In particular, this implies that $\mathcal{P}_q^n$ contains no integer points in its interior. Moreover, $q$-matroids correspond precisely to points with integer coordinates. It should be noted, however, that $\mathcal{P}_q^n$ is not an integral polytope; besides these integer points, there exist other vertices whose coordinates are non-integer. Moreover, as soon as $n$ grows, the number of vertices also increases. Since this polytope is described by constraints with rational coefficients, it is a rational polytope, that is, all its vertices have rational coordinates. 

We introduce convex combinations of points in $\mathcal{P}_q^n$ as a natural operation on $q$-polymatroids. In addition to establishing general results on the flats of such combinations (see \Cref{subsec: flats}), we focus specifically on convex combinations of $q$-matroids with rational coefficients. We investigate how key structural elements, such as cyclic spaces and $\mu$-independent spaces (where $\mu$ is a denominator of the resulting $q$-polymatroid), behave under this operation. Specifically, we express these features of the resulting $q$-polymatroid in terms of the cyclic and independent spaces of the original $q$-matroids. We specialize our analysis to convex combinations involving two important families of $q$-matroids: paving $q$-matroids and uniform $q$-matroids. For paving $q$-matroids, we focus on a construction introduced in \cite{gluesing2022q}. In this context, we introduce the notion of \emph{$\mu$-pavingness} for $q$-polymatroids and show that the convex combination of two paving $q$-matroids, built using this special construction, yields a $\mu$-paving $q$-polymatroid, where the denominator $\mu$ is not necessarily principal; see \Cref{thm: mu_paving_conv_comb}. We further characterize important structural features of the resulting $q$-polymatroid: its flats (\Cref{thm: flats_paving_conv_comb}), cyclic spaces (\Cref{thm: cyclic_paving_conv_comb}), and cyclic flats (\Cref{coro: cyclic_flats_paving_conv_comb}). For the convex combination of two uniform $q$-matroids, we provide characterizations of $\mu$-independent spaces, under specific constraints (see \Cref{thm:mu-indep_uniform} and \Cref{thm: mu_indep_uniform2}), and cyclic flats (see \Cref{prop:cyc_flats_uniform}). These ideas are then extended to the convex combination of $n-2$ uniform $q$-matroids, yielding further results on $\mu$-independence (see \Cref{thm:mu_indep_flag_uniform}).

Another well-studied concept in matroid theory is the characteristic polynomial. Extensions of this invariant have been proposed for $q$-matroids, integer $q$-polymatroids, and $\mL$-matroids; see~\cite{jany2023proj_matroid, byrne2024weighted, alfarano2023critical}. In this work, we introduce a broader notion of the characteristic polynomial, tailored for $q$-polymatroids whose $q$-rank functions take rational values. Specifically, we define an invariant, the \emph{characteristic Puiseux polynomial}, associated with such $q$-polymatroids. This invariant is not a polynomial in the classical sense: it is a truncated Puiseux series, meaning a finite linear combination of monomials whose exponents may be rational (and possibly negative). Despite this departure from classical polynomials, we adopt the name characteristic Puiseux polynomial to emphasize its structural role and its connection to existing generalizations in matroid theory.
We show that the characteristic Puiseux polynomial of the convex combination with rational coefficients of two paving $q$-matroids, derived from the construction in \cite{gluesing2022q} can be expressed in terms of the characteristic polynomials of the constituent $q$-matroids. Notably, we show that the polynomial of the resulting $q$-polymatroid is determined by the characteristic polynomial of only one of the original $q$-matroids; see \Cref{prop: char_Puiseux_series_paving_conv_comb}. 

One of the drawbacks of the polytope of all $q$-rank functions is that we are not able to distinguish representable $q$-polymatroids. This difficulty is due to the lack of an axiomatic definition that applies only  for the representable case. However, we illustrate some examples of convex combinations of special representable $q$-polymatroids arising from rank-metric codes; see \Cref{thm: mu_indep_poly}. 

The rest of the paper is organized as follows. In \Cref{sec: prelims}, we provide the basic notions on $q$-polymatroids. In \Cref{sec: polytope}, we introduce the polytope of all $q$-rank functions and initiate the study of its geometric properties. In \Cref{sec: conv_combi}, we define the convex combination of $q$-polymatroids and provide its first properties. In \Cref{sec: paving_conx_comb}, we characterize the convex combination with rational coefficients of some paving $q$-matroids. In \Cref{sec: uniform_conv_comb}, we consider the convex combination of uniform $q$-matroids. In \Cref{sec: Puiseux_polyn}, we introduce the characteristic Puiseux  polynomial for rational $q$-rank functions and we study the characteristic Puiseux polynomial of the convex
combination of paving $q$-matroids. In \Cref{sec: rep_q_polymats}, we provide some examples of convex combinations of representable $q$-polymatroids. We conclude with some open questions and future research directions in \Cref{sec: final_remarks}.

\section{Preliminaries}\label{sec: prelims}

In this section, we outline some basic facts and definitions of polytopes and $q$-polymatroids.

{\subsection{Polytopes} In this subsection, we collect some background material about polytopes and their properties.
More details can be found in \cite{grunbaum1967convex,ziegler2012lectures}.

We start by recalling that a \textbf{polyhedron} is a set of the form $\mP=\{x \in \R^m \mid A  x^\top \le b^\top \}$, where $A \in \R^{\ell \times m}$, $\ell,m \ge 1$, $b \in \R^\ell$, and $\le$ is applied componentwise. In other words, a polyhedron~$\mP$ is a region defined by finitely many linear inequalities. The region $\mP$ is called a \textbf{polytope} if it is bounded. 
Since this definition implies that a polytope is the intersection of a finite number of half-spaces, such a description is called \textbf{half-space representation}, \textbf{$\mathcal{H}$-representation} or \textbf{$\mathcal{H}$-description}.
A fundamental result states that every polytope is the convex hull of a finite set of points. For a set $S \subseteq \R^m$, we let $\conv(S)$ denote its convex hull, where $\conv(\emptyset)=\emptyset$.

\begin{theorem}[see e.g.~\cite{grunbaum1967convex}]\label{thm:fundamental_poly}
Let $\mP \subseteq \R^m$ be a polytope. Then there exists a finite set $S \subseteq \R^m$ such that $\mP=\conv(S)$.
\end{theorem}
In particular, a polytope is a convex region in $\R^m$. Note that we follow the notation of standard texts like \cite{grunbaum1967convex, ziegler2012lectures}, which use the term ``polytope" for a bounded convex polyhedron, and the word ``polyhedron" for the more general, possibly unbounded object.

A \textbf{supporting hyperplane} of $\mP$ is a hyperplane $H = \{x \in \R^m \mid a x^\top = b\}$, where $a \in \R^m \setminus \{0\}$ and $b \in \R$, such that $\mP$ is entirely contained in one of the two closed half-spaces determined by~$H$ and $H \cap \mP \neq \emptyset$. A \textbf{face} of $\mP$ is any set of the form $F = \mP \cap H$, where $H$ is a supporting hyperplane of $\mP$. The \textbf{dimension} of a face $F = \mathcal{P} \cap H$ is its dimension as an affine subspace.

A \textbf{vertex} of a polytope $\mP \subseteq \R^m$ is an element $v \in \mP$ with $v \notin \conv(\mP \setminus \{v\})$. It is a standard result that a point $v$ is a vertex if and only if it is a $0$-dimensional face, meaning that it is the unique intersection of $\mathcal{P}$ with some supporting hyperplane.
The set of vertices of $\mP$ is denoted by $\ver(\mP)$. Note that if $\mP=\conv(S)$ is a polytope then $\ver(\mP) \subseteq S$.
Thus $\mP=\conv(\ver(\mP))$. Moreover, a nonempty polytope has at least one vertex. 
The description of a polytope as the convex hull of a finite set  is called \textbf{$\mathcal{V}$-representation} or \textbf{$\mathcal{V}$-description}.

\subsection{\emph{q}-Polymatroids} We recall the necessary background on $q$-polymatroids.
Throughout this paper we will make use of the following notation.

\begin{notation}\label{not:standing_notation}
Let $q$ be a prime power and let $\F_q$ be the finite field with $q$ elements. Let $n\geq 2$ be a positive integer and $E=\F_q^n$. For a space $X$, we denote by $\mL(X)$ the collection of subspaces of $X$. For $0\leq i\leq \dim(X)$, we denote by $\mL(X)_{\leq i}$ and $\mL(X)_{\geq i}$ the collections of all subspaces of~$X$ with dimension at most $i$ and at least $i$, respectively, while we denote by $\mL(X)_{i}$ the collection of all $i$-dimensional subspaces of $X$. Further, we denote by $\mL(X)^\ast$ the collection $\mL(X)\setminus\langle 0 \rangle$. For ease of notation, given a space $X$, we denote by $\mH(X)$ the set of codimension-one subspaces of $X$.
We write $X\leq Y$ to indicate that $X$ is a subspace of $Y$. If a subspace is to be understood as being one-dimensional, we represent it by a lowercase letter, so for instance, $x \leq X$ means that $x$ is a one-dimensional subspace of $X$. 
Finally, we set $T:=|\mL(E)|$. 
\end{notation}

The first results on $q$-polymatroids and their relation to coding theory can be read in \cite{gorla2019rank} and~\cite{shiromoto2019codes}. The following definition is from \cite[Def. 4.1]{gorla2019rank}.

\begin{definition}\label{def:rank_axioms}
A function $\rho: \mL(E)\longrightarrow \mathbb{R}$ is a \textbf{$q$-rank function} if it satisfies the following axioms.
\begin{itemize}
	\item[(R1)] \emph{Boundedness}: $0\leq \rho(A) \leq \dim(A)$, for all $A \in \mL(E)$.
	\item[(R2)] \emph{Monotonicity}: $A\leq B \Rightarrow \rho(A)\leq \rho(B)$,  for all $A,B \in \mL(E)$.
	\item[(R3)] \emph{Submodularity}: $\rho(A \cap B)+\rho(A+B)\leq \rho(A) +\rho(B)$, for all $A,B \in \mL(E)$.
\end{itemize}
A \textbf{$q$-polymatroid} is a pair $\mathcal{M}=(\mL(E),\rho)$ for which $\rho$ is a $q$-rank function.
\end{definition}
The value $\rho(E)$ is called the \textbf{rank of $\mathcal{M}$} and $E$ is called the \textbf{ground space} of $\mM$.

The definition of $q$-polymatroid appeared in different forms in the literature. In \cite[Def. 2.1]{gluesing2022independent} the rank function is defined as in \Cref{def:rank_axioms}, with the sole difference that it takes only rational values; in this case we talk about \textbf{rational $q$-polymatroids}. Fundamental concepts regarding rational $q$-polymatroids have been investigated in \cite{gluesing2022q} and \cite{gluesing2022independent}.
A number $\mu\in\mathbb{Q}_{\geq 0}$ is called a \textbf{denominator of $\rho$}  (and $\mathcal{M}$) if $\mu \rho(X) \in \mathbb{N}_0$ for all $X\in\mL(E)$. The smallest denominator is called the \textbf{principal denominator}. A $q$-polymatroid with
principal denominator $1$ (i.e., $\rho(X)\in\mathbb{N}_0$ for all $X$) is a $q$-matroid.  

Next, for any $r\in\mathbb{N}$ a
$(q, r)$-polymatroid as in \cite[Def. 2]{shiromoto2019codes} by Shiromoto, \cite[Def. 1]{ghorpade2020polymatroid} 
and \cite[Def. 1]{byrne2024weighted} can be turned into a rational $q$-polymatroid with denominator $r$ by
dividing the rank function by $r$. Conversely, given a rational $q$-polymatroid with denominator $\mu$,
then $(\mL(E), \mu\rho)$ is a $(q,\lceil \mu\rceil)$-polymatroid. Finally, in \cite[Def. 2.12]{alfarano2023critical} and \cite[Def. 2.2]{byrne2023cyclic} an $\mL$-polymatroid is defined as a direct extension of the previous definitions to arbitrary finite complemented modular lattices.

We will use the following definition of independent spaces proposed in \cite[Def. 3.1]{gluesing2022independent}. 

\begin{definition}\label{def:indep_spaces}
Let $\mM = (\mL(E), \rho)$ be a rational $q$-polymatroid with denominator $\mu$, not necessarily principal.  A space $I\in\mL(E)$ is called \textbf{$\mu$-independent} if
$$\rho(J)\geq \frac{\dim(J)}{\mu},$$ 
for all subspaces $J\leq I$.
$I$ is called \textbf{$\mu$-dependent} if it is not $\mu$-independent. A \textbf{$\mu$-circuit} is a $\mu$-dependent space for which all proper subspaces are $\mu$-independent. A $1$-dimensional $\mu$-dependent space is called a \textbf{$\mu$-loop}. We define $\mathcal{I}_\mu=\mathcal{I}_\mu(\mathcal{M}) = \{I \in\mL(E) : I \textnormal{ is } \mu\textnormal{-independent}\}$. 
A $\mu$-independent space $I$ for which $\rho(I)=\dim(I)$ is called a \textbf{strong independent} space.
\end{definition}

In \cite[Def. 3.8]{gluesing2022independent}, the authors define the $\mu$-bases of a space $V\in\mL(E)$ as the inclusion-maximal $\mu$-independent subspaces of $V$. Furthermore, they show that all the $\mu$-bases of $V$ have the same rank value, which is equal to the rank of $V$.

In case of $\mM$ being a $q$-matroid, the notion of strong independence coincides with the classical one. Specifically, a subspace $A \in \mathcal{L}(E)$ is called an independent space of the $q$-matroid $\mathcal{M}$ if $\rho(A) = \dim A$. If $X \in \mL(E)$ and $\rho(X) = 0$, then $X$ is called a \textbf{loop space}. A $1$-dimensional loop space is called a \textbf{loop} of $\mathcal{M}$. By submodularity, it follows that if $x, y \in\mL(E)$ are loops, their vector space sum $x+y$ must also be a loop space. Consequently, there exists a unique maximal space containing all loops; we call this \textbf{the loop space} of $\mathcal{M}$.
A subspace of $E$ that is not independent is called {\bf dependent} of $\mM$. We call $C \in \mL(E)$ a {\bf circuit} if it is itself a dependent space and every proper subspace of $C$ is independent. 

We finally define flats and cyclic spaces of a $q$-polymatroid $\mM=(\mL(E),\rho)$. 

\begin{definition}\label{def:flats}
    Let $X\in\mL(E)$. We say that $X$ is a \textbf{flat} of $\mM$ if $\rho(X)<\rho(X+ v)$ for all one-dimensional spaces $ v\in\mL(E)_1\setminus \mL(X)_1$.
\end{definition}

The following definition comes from \cite[Def. 4.2]{byrne2023cyclic}.

\begin{definition}\label{def:cyc_spaces}
Let $X\in\mL(E)$. We say that $X$ is \textbf{cyclic} if for all $H\in\mH(X)$ one of the following is satisfied:
\begin{enumerate}
    \item $\rho(X)=\rho(H)$ or
    \item $0 < \rho(X) - \rho(H)$ and there exists $ a\in \mL(X)_1\setminus \mL(H)_1$ such that $\rho(X) - \rho(H) < \rho(a)$.
\end{enumerate}
\end{definition}

Furthermore, we call a space $X\in\mL(E)$ a \textbf{cyclic flat} of $\mM$ if it is both a flat and a cyclic space of $\mM$.  We denote by $\mF(\mM), \; \mO(\mM)$ and $\mZ(\mM)$ the collections of flats, cyclic spaces and cyclic flats of $\mM$. Note that for $q$-matroids the notion of flats is identical, while, for cyclic spaces, the condition~(2) is superfluous.

Next we recall two well-known families of $q$-matroids, namely \emph{uniform $q$-matroids} (\cite{jurrius2018defining}) and \emph{paving $q$-matroids} (\cite{luerssen2023representabilitydirectsum}), which we will use in the next sections.

\begin{definition}\label{def:uniform}
Let $0\leq k\leq n$. For $V\in\mL(E)$, let $\rho(V) :=\min\{k,\dim(V)\}$. Then $(\mL(E),\rho)$ is a $q$-matroid, called the \textbf{uniform $q$-matroid on $E$ of rank $k$}, and denoted by~$\mU_{k,n}(q)$.
\end{definition}

\begin{definition}
    A $q$-matroid $(\mL(E),\rho)$ is said to be \textbf{paving} if for every  circuit $C$ we have $\dim(C)\geq\rho(E)$.
\end{definition}


\section{The polytope of all \emph{q}-rank functions}\label{sec: polytope}

In this section, we introduce the polytope of all $q$-rank functions and we describe some of its properties. In order to do this, we endow $\mL(E)$ with a linear order in the following way
\[
   \la 0\ra \preceq \mL(E)_1 \preceq \mL(E)_2 \preceq \ldots \preceq \mL(E)_{n-1} \preceq E,  
\]
where $\mL(E)_i$ is also endowed with a linear order for each $1\leq i\leq n-1 $. 
For every $0\leq\ell\leq n$, the number of $\ell$-dimensional subspaces of $E$ is denoted by the Gaussian binomial coefficient
$$\qqbin{n}{\ell}_q=\frac{(q^n-1)(q^{n-1}-1)\cdots(q^{n-\ell+1}-1)}{(q-1)(q^2-1)\cdots(q^\ell-1)}.$$
Hence, the size of $\mL(E)$ is $T=\sum\limits_{i=0}^n\qqbin{n}{i}_q$.

Now we consider the vector space $\mathbb{R}^T$, which contains vectors of the form $v=(v_X)_{X\in\mL(E)}$. 

\begin{definition}\label{def:rank_poly}
The \textbf{$q$-rank polyhedron on $E$} is the region
\begin{align*}
    \mathcal{P}_q^n:=&\{v=(v_X)_{X\in\mL(E)}\in\mathbb{R}_{\geq 0}^T\;\mid\; v \text{ satisfies the constraints } (*)\},
\end{align*}
where the constraints $(*)$ are the following:
\begin{enumerate}
    \item $v_X \leq \dim(X)$, for all $X\in\mL(E)$.
    \item $v_X-v_Y \leq 0$, for all $X,Y\in\mL(E)$ such that $X\leq Y$.
    \item $v_{X+Y}+v_{X\cap Y}-v_{X}-v_{Y} \leq 0$, for all $X,Y\in\mL(E)$.
 \end{enumerate}
We refer to these inequalities as of type 1, type 2 or type 3. 
\end{definition}
Note that the inequalities of type 1, 2 and 3 are exactly the axioms (R1), (R2), (R3) of a $q$-rank function. 
Hence, every $v\in\mP_q^n$ is the image vector of a $q$-rank function.
The first rank axiom ensures that every coordinate of $v$ is non-negative and bounded from above. This implies that the polyhedron $\mathcal{P}_q^n$  from  \Cref{def:rank_poly} is a polytope  in $\mathbb{R}^T$ that lies completely in $\mathbb{R}_{\geq 0}^T$. We call it the \textbf{polytope of all $q$-rank functions on $E$} or \textbf{$q$-rank polytope on $E$}.

\begin{remark}
The $q$-rank polytope $\mathcal{P}_q^n$ contains all the possible $q$-rank functions. The points of~$\mathcal{P}_q^n$ with integer coordinates (lattice points) correspond to $q$-matroid rank functions. Note that the term ``lattice points" here refers to integer-coordinate points only and should not be confused with covers of the minimal elements of a lattice in the order-theoretic sense.
\end{remark}

\begin{example}\label{ex:first_ex}
Consider $E=\F_2^2$ and $\mL(E)=\{\la 0 \ra \preceq \la 0 1\ra \preceq \la 1 0\ra \preceq \la 1 1\ra \preceq E\}$. Then the constraints describing $\mathcal{P}_2^2$ are given as follows:
    
\begin{center}
    \begin{tabular}{c|c|c}
        \textbf{Type $1$} & \textbf{Type $2$} & \textbf{Type $3$}\\ \hline
             $v_1=0$ & $v_2-v_5 \leq 0$  & $v_5-v_2-v_3 \leq 0$ \\
             $v_2 \leq 1$ & $v_3-v_5 \leq 0$ & $v_5-v_2-v_4 \leq 0$ \\
             $v_3 \leq 1$ & $v_4-v_5 \leq 0$ & $v_5-v_3-v_4 \leq 0$ \\
             $v_4 \leq 1$ &  & \\
             $v_5\leq 2$ &  &
    \end{tabular}
\end{center}

Note that we omitted all inequalities of type $2$ and $3$ involving pairs of subspaces $(\la0 \ra, X)$ and also all inequalities of type $3$ involving pairs of subspaces $(X,E)$, since they are redundant. Using these inequalities we can determine the $\mathcal{H}$-representation of $\mathcal{P}_2^2\subseteq\mathbb{R}^5$, given by $Av^\top\leq b^\top$, for
    \[
        A = \begin{pmatrix}
             1&0&0&0&0\\
             0&1&0&0&0\\
             0&0&1&0&0\\
             0&0&0&1&0\\
             0&0&0&0&1\\
             0&1&0&0&-1\\
             0&0&1&0&-1\\
             0&0&0&1&-1\\
             0&-1&-1&0&1\\
             0&-1&0&-1&1\\
             0&0&-1&-1&1
        \end{pmatrix}\in\mathbb{R}^{10\times 5}\quad\text{and}\quad
        b^\top = \begin{pmatrix}
            0\\
            1\\
            1\\
            1\\
            2\\
            0\\
            0\\
            0\\
            0\\
            0\\
            0
        \end{pmatrix}\in\mathbb{R}^{10}.
    \]

    By making use of the computer algebra system \textsc{OSCAR} \cite{OSCAR}, we can compute all the lattice points of $\mathcal{P}_2^2$ and obtain the following six:
    \[
        (0,0,0,0,0), (0,1,1,1,1), (0,1,1,1,2), (0,1,1,0,1), (0,1,0,1,1), (0,0,1,1,1). 
    \]

    Each of these points corresponds to one of the six $q$-matroids on $\F_2^2$. Specifically, the first three correspond to the uniform $q$-matroids $\mU_{0,2}(2)$, $\mU_{1,2}(2)$, and $\mU_{2,2}(2)$, respectively. The last three correspond to the $q$-matroids of rank $1$ possessing exactly two bases. Furthermore, we find that these six lattice points constitute the entire vertex set of $\mP_2^2$.
\end{example}

\begin{remark} As \Cref{ex:first_ex} illustrates, some inequalities describing $\mathcal{P}_q^n$ are redundant. In particular, we can make the following observations. 
\begin{enumerate}
    \item[(a)] For inequalities of type $2$, it is sufficient to consider pairs of subspaces $X,Y\in\mL(E)^\ast$ such that $X \lessdot Y$ in the lattice $\mL(E)$, i.e. $Y$ covers $X$. This is because for $X\lessdot Z\lessdot Y$ we have inequalities $v_X\leq v_Z$ and $v_Z\leq v_Y$ that already imply $v_X\leq v_Y$.
    \item[(b)] For inequalities of type $3$, we do not need to consider pairs $X,Y\in\mL(E)$ such that $X\leq Y$ (or $Y\leq X$). This is because we would get $v_{X+Y}+v_{X\cap Y}=v_{X}+v_{Y}$, since $X+Y = Y$ and $X\cap Y=X$ (or $X+Y = X$ and $X\cap Y=Y$).  
\end{enumerate}
\end{remark}

Another important observation is that we always have the equality $v_{\la0\ra}=0$ by the first rank axiom. 
Consequently, the $q$-rank polytope is contained in a hyperplane of $\mathbb{R}^T$ and can be naturally embedded in $\mathbb{R}^{T-1}$. We denote the polytope obtained by removing the coordinate corresponding to $\langle 0 \rangle$ as the \textbf{reduced $q$-rank polytope}, $\rRP$. Note that $\rRP$ preserves all algebraic and geometric properties of $\RP$. Throughout this paper, we will explicitly indicate whether we are working with $\RP$ or $\rRP$. We now proceed to determine the geometric properties of $\rRP$, specifically focusing on its dimension and its set of lattice points.

Before determining the dimension of the reduced $q$-rank polytope, we recall that the dimension of a polytope $\mathcal{P}$, $\dim(\mathcal{P})$, is defined as the dimension of its \textbf{affine hull}, which is the smallest affine subspace containing $\mathcal{P}$.
If $\mathcal{P} \subseteq \mathbb{R}^m$ is defined by a system of linear inequalities, its dimension is $m - d$, where $d$ is the number of linearly independent \textbf{implicit equalities} (i.e., defining inequalities that are satisfied as equalities by all points in $\mathcal{P}$). A crucial property is that $\mathcal{P}$ has a non-empty interior relative to its affine hull; furthermore, the existence of a point satisfying all defining inequalities strictly (except for the implicit equalities) implies that no other implicit equalities exist. For further details, we refer to \cite{ziegler2012lectures, schrijver1998theory}.

\begin{proposition}\label{prop:dimension}
The reduced $q$-rank polytope $\rRP$ has dimension $T-1$.
\end{proposition}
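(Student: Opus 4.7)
The plan is to show that $\rRP$ contains an interior point of $\mathbb{R}^{T-1}$. Since the only forced equality $v_{\la 0 \ra}=0$ has already been quotiented out, the defining system of $\rRP$ in the variables $(v_X)_{X\in\mL(E)^\ast}$ consists of honest inequalities (plus inequalities that reduce to $0\le 0$ on every rank function). Exhibiting a point that satisfies each non-trivial defining inequality \emph{strictly} therefore forces $\dim(\rRP)=T-1$.

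The interior point I would use is a rank function depending only on dimension: set $\rho(X):=f(\dim X)$ for a strictly increasing, strictly concave function $f\colon\{0,1,\ldots,n\}\to\mathbb{R}_{\ge 0}$ with $f(0)=0$ and $f(k)<k$ for all $k\ge 1$. A convenient explicit choice is
\[
f(k)=\frac{k}{2}-\frac{k^{2}}{4n},\qquad 0\le k\le n.
\]
Every verification then reduces to a one-variable inequality about $f$ together with the modular identity $\dim(X)+\dim(Y)=\dim(X\cap Y)+\dim(X+Y)$.

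Concretely, I would check the three families of constraints in turn. Boundedness: $0<f(k)<k$ for $1\le k\le n$ is immediate from the formula. Monotonicity: it is enough to treat covering pairs $X\lessdot Y$, and $f(k+1)-f(k)=\tfrac{1}{2}-\tfrac{2k+1}{4n}>0$ for $0\le k\le n-1$. Submodularity: for comparable $X,Y$ the inequality collapses to $0\le 0$; for incomparable $X,Y$, write $a=\dim X$, $b=\dim Y$, $c=\dim(X\cap Y)$, $d=\dim(X+Y)$, so that $a+b=c+d$ and $c<a,b<d$. A direct computation using $a^{2}+b^{2}-c^{2}-d^{2}=-2(a-c)(b-c)$ gives
\[
\rho(X)+\rho(Y)-\rho(X\cap Y)-\rho(X+Y)=\frac{(a-c)(b-c)}{2n}>0,
\]
which is strict submodularity.

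Thus $\rho$ lies in the interior of $\rRP$ inside $\mathbb{R}^{T-1}$, so $\rRP$ is full-dimensional and $\dim(\rRP)=T-1$. I do not anticipate a real obstacle: the only substantive point is that strict submodularity must hold at every incomparable pair, which is exactly what strict concavity of $f$, combined with the dimension formula, delivers uniformly.
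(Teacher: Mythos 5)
Your proof is correct and follows essentially the same strategy as the paper: exhibit an interior point of $\rRP$ given by a strictly increasing, strictly concave function of dimension and verify that all three families of constraints hold strictly (the paper uses $f(k)=\tfrac{k}{k+1}$ where you use $f(k)=\tfrac{k}{2}-\tfrac{k^2}{4n}$, but the submodularity check reduces to the same sign computation via $\dim(X)+\dim(Y)=\dim(X\cap Y)+\dim(X+Y)$).
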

\begin{proof}
By definition, $\rRP \subset \mathbb{R}^{T-1}$, which immediately implies that $\dim(\rRP) \le T-1$. To prove the equality, it suffices to show that $\rRP$ has a non-empty interior in $\mathbb{R}^{T-1}$. This is guaranteed if there exists a point $P_\rho \in \rRP$ that satisfies all defining linear inequalities of the reduced $q$-rank polytope strictly.
Let $\rho$ be a real-valued function on $\mL(E)$ defined by $\rho(X)=\frac{\dim(X)}{\dim(X)+1}$ for every $X\in\mL(E)$. Then $\rho$ is a $q$-rank function. In fact, (R1) and (R2) are clearly satisfied with strict inequalities. In order to show (R3), consider two spaces $X,Y\in\mL(E)$, such that $X\nleq Y$ or $Y\nleq X$. For ease of notation, let $x=\dim(X)$, $y=\dim(Y)$, $z=\dim(X\cap Y)$ and $t=\dim(X+Y) = x+y-z$. In particular, $0< z< \min\{x,y\}$ and $x,y<t$. We show that 
$\frac{t}{t+1} + \frac{z}{z+1} < \frac{x}{x+1} + \frac{y}{y+1}.$
We then have the following 
$$\begin{array}{cccc}
    & \frac{t}{t+1} + \frac{z}{z+1} & < & \frac{x}{x+1} + \frac{y}{y+1} \\
    \Leftrightarrow & \frac{1}{t+1} + \frac{1}{z+1} & > &\frac{1}{x+1} + \frac{1}{y+1}\\
    \Leftrightarrow & \frac{x+y+2}{(x+y-z+1)(z+1)} &> & \frac{x+y+2}{(x+1)(y+1)}\\
    \Leftrightarrow & (x+y-z+1)(z+1) &< &(x+1)(y+1)\\
    \Leftrightarrow & (x-z)(z-y) &<& 0.\\
\end{array}$$
The last inequality is always satisfied, so also (R3) is satisfied with strict inequality. 
Because the point $P_\rho$ satisfies all defining inequalities strictly, it lies in the interior of $\rRP$ relative to $\mathbb{R}^{T-1}$. Consequently, the affine hull of $\rRP$ is $\mathbb{R}^{T-1}$, and $\dim(\rRP) = T-1$.
\end{proof}

Determining in general the faces of $\rRP$ is not an easy task. In the following example, we examine the case of $\bar{\mathcal{P}}_2^2$, which is the smallest one we can deal with.

\begin{example}
Consider the $q$-rank polytope $\mP_2^2$ in \Cref{ex:first_ex}. First of all, notice that $\bar{\mathcal{P}}_2^2\subset\mathbb{R}^4$ and, by using the lexicographic linear order, we have $\la 01\ra\preceq\la 10\ra\preceq\la 11\ra\prec E$. As already established in \Cref{prop:dimension}, the dimension of $\bar{\mathcal{P}}_2^2$ is $4$.     A face of a polytope is any intersection of the polytope with a supporting hyperplane. By using the computer algebra system \textsc{OSCAR} \cite{OSCAR} we compute that there exist $6$ many $0$-dimensional faces, $15$ many $1$-dimensional faces, $19$ many $2$-dimensional faces and $9$ many $3$-dimensional faces of $\bar{\mathcal{P}}_2^2$. We illustrate some of them.

\noindent \underline{\textbf{$0$-faces.}} For all $v\in\bar{\mathcal{P}}_2^2$ we have that $v_1+v_2+v_3+v_4\leq 5$. Then 
        \[
            F_0 = \bar{\mathcal{P}}_2^2\cap\{v\in\mathbb{R}_{\geq 0}^4\;\mid\;v_1+v_2+v_3+v_4=5\}=\{(1,1,1,2)\}
        \]
        is $0$-dimensional and yields a vertex of $\bar{\mathcal{P}}_2^2$. 
        Moreover, if we consider the intersections of  $\bar{\mathcal{P}}_2^2$ with the hyperplanes $\{v\in\mathbb{R}_{\geq 0}^4\;\mid\;v_2+v_3+v_4=4\}$ and $\{v\in\mathbb{R}_{\geq 0}^4\;\mid\;v_1+v_3+v_4=4\}$
        we get different descriptions of $F_0$.
        We also have that $v_i+v_4\leq 3$, for $i=1,2,3$. 
        The intersections of $\bar{\mathcal{P}}_2^2$ with the hyperplanes $H_i=\{v\in\mathbb{R}_{\geq 0}^4\;\mid\;v_i+v_4=3\}$, give $F_0$, for each $i=1,2,3$.
        Finally, we have that $v_4\leq 2$. If we consider the intersection of $\bar{\mathcal{P}}_2^2$ with the hyperplane $\{v\in\mathbb{R}_{\geq 0}^4\;\mid\;v_4=2\}$, we get again the same face $F_0$.
        
\noindent \underline{\textbf{$1$-faces.}} For all $v\in\bar{\mathcal{P}}_2^2$ we have that $v_1+v_2+v_3\leq 3$. Then
    \begin{align*}
        F_1 &= \bar{\mathcal{P}}_2^2\cap\{v\in\mathbb{R}_{\geq 0}^4\;\mid\;v_1+v_2+v_3=3\}\\
        &=\{(1,1,1,v_4)\;\mid\; 1\leq v_4\leq 2\}
    \end{align*}
is $1$-dimensional and contains only two $q$-matroids, namely $\mU_{2,2}(2)$ and $\mU_{1,2}(2)$. 

    \noindent \underline{\textbf{$2$-faces.}} For all $v\in\bar{\mathcal{P}}_2^2$ we have that $v_1+v_2\leq 2$.
        Then
        \begin{align*}
            F_{2,1} &= \bar{\mathcal{P}}_2^2\cap\{v\in\mathbb{R}_{\geq 0}^4\;\mid\;v_1+v_2=2\}\\
            &=\{(1,1,v_3,v_4)\;\mid\; 0\leq v_3\leq 1,\;1\leq v_4\leq 2,\;v_4-v_3\leq 1\}
        \end{align*}
        is $2$-dimensional and contains the lattice points $(1,1,1,2),(1,1,1,1),(1,1,0,1)$, which correspond to $\mU_{2,2}(2)$, $\mU_{1,2}(2)$ and the $q$-matroid, having loop space $\la 11 \ra$, respectively.
        Similarly, we can consider 
        \begin{align*}
            F_{2,2} &= \bar{\mathcal{P}}_2^2\cap\{v\in\mathbb{R}_{\geq 0}^4\;\mid\;v_1+v_3=2\}\\
            &=\{(1,v_2,1,v_4)\;\mid\; 0\leq v_2\leq 1,\;1\leq v_4\leq 2,\;v_4-v_2\leq 1\}
        \end{align*}
        and 
        \begin{align*}
            F_{2,3} &= \bar{\mathcal{P}}_2^2\cap\{v\in\mathbb{R}_{\geq 0}^4\;\mid\;v_2+v_3=2\}\\
            &=\{(v_1,1,1,v_4)\;\mid \; 0\leq v_1\leq 1,\;1\leq v_4\leq 2,\;v_4-v_1\leq 1\}.
        \end{align*}
        We get that $F_{2,2}$ and $F_{2,3}$ are also $2$-faces containing both $\mU_{1,2}(2)$ and $\mU_{2,2}(2)$. 
        $F_{2,2}$ also contains the $q$-matroid with loop space $\la 01 \ra$ and $F_{2,3}$ contains the $q$-matroid with loop space $\la 10 \ra$ respectively.

    \noindent \underline{\textbf{$3$-faces.}} For all $v\in\bar{\mathcal{P}}_2^2$ we have that $v_i\leq 1$, for $i=1,2,3$.
        Then for each $i=1,2,3$ we have that
        \[
            F_{3,i} = \bar{\mathcal{P}}_2^2\cap\{v\in\mathbb{R}_{\geq 0}^4\;\mid\;v_i=1\}
        \]
        is $3$-dimensional. $F_{3,1}$ contains the four lattice points $(1,1,1,2)$, $(1,1,1,1)$, $(1,1,0,1)$, $(1,0,1,1)$. $F_{3,2}$ contains the lattice points $(1,1,1,2), (0,1,1,1), (1,1,0,1), (1,1,1,1)$ and $F_{3,3}$ contains the lattice points $(1,1,1,2), (0,1,1,1), (1,0,1,1), (1,1,1,1)$. 
        
    Finally recall that $\bar{\mathcal{P}}_2^2$ has only six lattice points, which correspond to all the different $q$-matroids on $\F_2^2$ and these points are the vertices of $\bar{\mathcal{P}}_2^2$. In particular, this means that there are no interior lattice points.  
\end{example}

\begin{example}
    In this example, we collect some computational results obtained with \textsc{OSCAR} for $\bar{\mathcal{P}}_3^2$ and $\bar{\mathcal{P}}_2^3$. 

    \begin{center}
        \begin{tabular}{c|c|c}
             & $\bar{\mathcal{P}}_3^2$ & $\bar{\mathcal{P}}_2^3$\\ \hline
             \textbf{Dimension} & $5$  & $15$  \\
             \textbf{Number of lattice points} & $7$  & $32$ \\
             \textbf{Number of interior lattice points} & $0$ & $0$  \\
             \textbf{Number of integer vertices} & 7 & 32\\ 
              \textbf{Number of vertices} & $11$ & $3483$ \\
        \end{tabular}
    \end{center}
\end{example}

We note that also for $\bar{\mathcal{P}}_3^2$ and $\bar{\mathcal{P}}_2^3$ we have that all nonzero lattice points are vertices. However,  we have, in addition, other rational vertices. This is not just a coincidence, but a property that holds in general. 
The next result is the main theorem of this section, in which we show that $\rRP$ has no interior lattice points. 
We denote by $\text{Vert}(\rRP)$ the set of vertices of $\rRP$.

\begin{theorem}\label{thm:integer_vertices}
    Let $\mM=(\mL(E),\rho)$ be a $q$-matroid and let $p_\mM\in\rRP$ be its corresponding point. Then $p_\mM\in\text{Vert}(\rRP)$. 
\end{theorem}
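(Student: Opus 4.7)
The strategy is to verify that $p_\mM$ is extremal, using the standard criterion that a point $v$ of a convex polytope $P$ is a vertex if and only if every decomposition $v = \tfrac{1}{2}(p_1 + p_2)$ with $p_1, p_2 \in P$ forces $p_1 = p_2 = v$. I therefore start with an arbitrary such decomposition $p_\mM = \tfrac{1}{2}(p_1 + p_2)$ with $p_1, p_2 \in \rRP$, denote by $\rho_1, \rho_2$ the corresponding $q$-rank functions, and aim to show $\rho_1 = \rho_2 = \rho$ as functions on $\mL(E)$, which will give $p_1 = p_2 = p_\mM$.

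The first step exploits the integrality of $\rho$ together with the boundedness axiom (R1). If $I \in \mL(E)$ is independent in $\mM$, then $\rho(I) = \dim(I)$, while (R1) applied to $\rho_1$ and $\rho_2$ gives $\rho_i(I) \leq \dim(I)$ for $i = 1,2$. Combined with the identity $\rho_1(I) + \rho_2(I) = 2\dim(I)$ coming from the decomposition, this forces $\rho_1(I) = \rho_2(I) = \dim(I)$. Hence $\rho_1$ and $\rho_2$ agree with $\rho$ on every independent subspace of $\mM$, and in particular every such $I$ is also independent for the $q$-polymatroids associated to $p_1$ and $p_2$.

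For an arbitrary $X \in \mL(E)$, I invoke the classical fact from $q$-matroid theory that $X$ admits a basis, that is, an independent subspace $B \leq X$ with $\dim(B) = \rho(X)$. Applying the previous step to $B$ yields $\rho_i(B) = \dim(B) = \rho(X)$, and monotonicity (R2) then gives $\rho_i(X) \geq \rho_i(B) = \rho(X)$ for both $i$. Since also $\rho_1(X) + \rho_2(X) = 2\rho(X)$, each of these inequalities must be an equality, so $\rho_1(X) = \rho_2(X) = \rho(X)$ throughout $\mL(E)$.

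The only place where the integrality of $\rho$ really enters is the squeeze at independent spaces through (R1); once that agreement is secured, monotonicity plus the existence of bases propagate the equality to every $X$ automatically. I do not anticipate any serious obstacle: the sole nontrivial input is the classical existence of bases for arbitrary subspaces of a $q$-matroid, after which the argument is short and purely axiomatic and relies on none of the geometry of $\rRP$ beyond the three rank axioms.
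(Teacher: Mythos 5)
Your proof is correct. The midpoint criterion you use (a point of a polytope is a vertex iff it is not the midpoint of two distinct points of the polytope) is a valid characterization of extreme points, and for a bounded polyhedron such as $\rRP$ the extreme points are exactly the vertices. The squeeze at independent spaces via (R1) is sound, and the propagation to arbitrary $X$ via the existence of a basis $B\leq X$ with $\dim(B)=\rho(X)$ together with (R2) and the sum identity $\rho_1(X)+\rho_2(X)=2\rho(X)$ is airtight; loop spaces are handled automatically since there $\rho(X)=0$ and (R1) forces $\rho_i(X)\geq 0$.

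Your route is genuinely different from the paper's. The paper proves vertexhood by exhibiting $T-1$ explicit supporting hyperplanes (one per nonzero subspace, split into the cases independent, dependent-but-not-in-the-loop-space, and contained in the loop space), showing their normal vectors form a triangular hence invertible matrix, and verifying that the unique solution of the resulting linear system is $p_\mM$. Both arguments ultimately rest on the same structural fact --- that the rank of any subspace of a $q$-matroid is the dimension of a maximal independent subspace of it --- but yours replaces the linear-algebraic bookkeeping with a two-line convexity argument and never touches the combinatorics of which inequalities are tight. What the paper's approach buys in exchange is an explicit list of facet-defining inequalities of $\rRP$ that are active at $p_\mM$, which is potentially useful for the further study of the face structure of the polytope (a question the authors leave open); your argument certifies extremality without producing that data. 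Either proof is complete and correct.
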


\begin{proof}
    Let $\mI$ and $\mD$ denote the collections of independent and dependent spaces of $\mM$ respectively. Let $L$ be the loop space of $\mM$. By \Cref{prop:dimension} we need to find $T-1$ independent supporting hyperplanes $H_1,\ldots,H_{T-1}$ such that $\rRP\subset H_i^+$ or $\rRP\subset H_i^-$ for all $i=1,\ldots,T-1$, and such that their intersection is given by the point $p_\mM$.

    So we choose the following hyperplanes:
    \begin{enumerate}
        \item For all $X\in\mI\setminus\{\la0\ra\}$ we consider the hyperplane given by the equation $v_X=\dim(X)$, having normal vector $e_X\in\mathbb{R}^{T-1}$.

        \item For all $X\in\mD\setminus\mL(L)$, let $Y$ be a space in $\mI$ such that~$Y<X$ and $Y$ is inclusion-maximal in $X$ with this property. We consider then the hyperplane given by $v_Y-v_X=0$, having normal vector $e_Y-e_X\in\mathbb{R}^{T-1}$.

        \item Finally for all $X\in\mL(L)$ we consider the hyperplane given by $v_X=0$, having normal vector $e_X\in\mathbb{R}^{T-1}$.
    \end{enumerate}

    Now since $\mI\setminus\{\la0\ra\}\cup\mD\setminus\mL(L)\cup\mL(L)=\mL(E)^\ast$ we have chosen $T-1$ different hyperplanes. Next, we need to show that their normal vectors are linearly independent. Therefore, we write them as rows of a matrix $A$ and, since every $X\in\mL(E)^\ast$ has a corresponding hyperplane, we can order them, from top to bottom, in the same way we ordered our coordinates (i.e. columns of $A$). Then $A$ is a lower triangular matrix with $1$ and $-1$ on the diagonal. Thus $\det(A)\ne 0$ and so $A$ has rank $T-1$. This implies that the linear system given by $Av=b$ has a unique solution, where the coordinate order of $b$ matches the one of the normal vectors and $b$ has entry $\dim(X)$ if we are in the case (1) and entry $0$ anywhere else.

    At this point the only thing left to show is that the solution to $Av=b$ is given by our point $p_\mM$. Using Gaussian elimination for the extended coefficient matrix $(A,b)$ we can eliminate the entries equal to $1$ in rows coming from  case (2) in the following way.

    Let $R$ be such a row, having a $1$ and a $-1$. Then the $1$ corresponds to a space $Y\in\mI\setminus\{\la0\ra\}$ such that $Y<X$ is inclusion-maximal in $X$, where $X$ corresponds to the $-1$. Since $Y\in\mI\setminus\{\la0\ra\}$, there exists a row $\Tilde{R}$ coming from the normal vector of a case (1) hyperplane associated to $Y$. Moreover such a row $\Tilde{R}$ has $b$ entry equal to $\dim(Y)$. So $R-\Tilde{R}$ yields the equation
    \[
        -v_X = -\dim(Y) \Longleftrightarrow v_X = \dim(Y).
    \]
    This means that $v_X = \rho(X)$ by definition of rank in terms of independent spaces. Now since we chose the hyperplanes of case (2) all in this way, we get $v_X =\rho(X)$ for all $X\in\mD\setminus\mL(L)$. Putting it all together, we get that the solution of $Av=b$ is given by the point $p=(v_X)_{X\in\mL(E)^\ast}$ such that
    \[
        v_X =
        \begin{cases}
        \;\dim(X)& \text{if } X\in\mI\setminus\{\la0\ra\},\\
        \;0& \text{if } X\in\mL(L),\\
        \;\max(\dim(Y)\;\mid\;Y<X,Y\in\mI\setminus\{\la0\ra\})& \text{if } X\in\mD\setminus\mL(L).
        \end{cases}
    \]
    Thus
    \[
        p=(\rho(X))_{X\in\mL(E)^\ast}=p_\mM
    \]
    and therefore $p_\mM\in\text{Vert}(\rRP)$, which completes our proof.
\end{proof}

In \Cref{thm:integer_vertices}, we showed that $\rRP$ has no integer interior points. In particular, the points corresponding to $q$-matroids are necessarily vertices in the reduced $q$-rank polytope. We leave as an open question to characterize the rational vertices of $\rRP$.

\begin{remark}
    We want to emphasize once more that the $q$-rank polytope $\RP$ has the same algebraic and geometric properties as the reduced $q$-rank polytope $\rRP$. This especially means that \Cref{prop:dimension} and \Cref{thm:integer_vertices} hold for $\RP$ as well. So, in the remainder of the paper, we apply them also to $\RP$.
\end{remark}

\section{The convex combination of two \emph{q}-(poly)matroids}\label{sec: conv_combi}

In this section, we define a new operation on $q$-polymatroids, by looking at their representation as points in the $q$-rank polytope. We initiate the study of the properties of this operation.

The $q$-rank polytope $\RP$ is, by construction, a convex set. This implies that for any collection of points $p_1, \ldots, p_t \in \RP$, their convex combination $p = \sum\limits_{i=1}^t \lambda_i p_i$, where the coefficients $\lambda_i \in\R_{> 0}$ and $\sum\limits_{i=1}^t  \lambda_i = 1$, is also contained in $\RP$. Given the linear ordering $\preceq$ on $\mL(E)$ as defined in Section~\ref{sec: polytope}, every point in $\RP$ corresponds to a unique $q$-rank function. Since $\RP$ is a convex set, any such combination consistently yields a vector whose coordinates satisfy the $q$-rank axioms, thereby defining another $q$-polymatroid.

\begin{definition}\label{def: conv_combi}
    Let $\mM_1, \ldots, \mM_t$ be $t$ different $q$-polymatroids with ground space $E$. Let $p_{\mM_1}, \ldots, p_{\mM_t}\in \RP$  be their corresponding points in the $q$-rank polytope. For every $\lambda_1,\ldots,\lambda_t\in\R_{> 0}$ such that $\sum\limits_{i=1}^t\lambda_i=1$, the \textbf{convex combination with coefficients} $\lambda_1,\ldots, \lambda_t$ of $\mM_1, \ldots, \mM_t$ is the $q$-polymatroid $\lambda_1\mM_1 + \cdots +\lambda_t\mM_t$, corresponding to the convex combination $\lambda_1 p_{\mM_1}+\cdots+\lambda_tp_{\mM_t}\in\RP$.
    \end{definition}
    In the specific case of two $q$-polymatroids $\mathcal{M}_1$ and $\mathcal{M}_2$, depending on the context, we may also fix $\lambda := \lambda_1$ and write $\lambda\mathcal{M}_1 + (1-\lambda)\mathcal{M}_2$; we refer to this as the \textbf{convex combination with coefficient $\lambda$}.

\begin{remark}
    Let $\rho_1,\ldots,\rho_t$ be the $q$-rank functions of $\mM_1,\ldots\mM_t$. The $q$-rank function of $\sum\limits_{i=1}^t\lambda_i\mM_i$ is given by $\sum\limits_{i=1}^t\lambda_i\rho_i(X)$, for every $X\in\mL(E)$. The fact that this is indeed a $q$-rank function is immediate by the definition of $\RP$. 
\end{remark}

We recall the notion of \emph{duality} for $q$-polymatroids. 
\begin{definition}
    Let $\mathcal{M} = (\mL(E), \rho)$ be a $q$-polymatroid and set 
    \begin{equation}
        \rho^*(V) = \dim(V) + \rho(V^{\perp}) - \rho(E).
    \end{equation}
    Then $\rho^*$ is a $q$-rank function on $E$ and $\mM^* = (\mL(E), \rho^*)$ is a $q$-polymatroid. It is called the \textbf{dual} of $\mM$.
\end{definition}
We have the following result. 
\begin{proposition}\label{prop:duality}
    Let $\mM_1 = (\mL(E), \rho_1)$ and $\mM_2 = (\mL(E), \rho_2)$ be $q$-polymatroids on $E$. Let $\lambda_1, \lambda_2 \in \mathbb{R}_{>0}$ be such that $\lambda_1 + \lambda_2 = 1$. Let $\mM = (\mL(E), \rho)$ be the convex combination of $
    \mM_1$ and $\mM_2$, with coefficients $\lambda_1,\lambda_2$ and $\rho = \lambda_1 \rho_1 + \lambda_2 \rho_2$. Then
    $$ \mM^* = \lambda_1 \mM_1^* + \lambda_2 \mM_2^*.$$
\end{proposition}
\begin{proof}
Let $\rho^*$ denote the $q$-rank function of $\mathcal{M}^*$. By the definition of the dual $q$-polymatroid, for any $V \in \mathcal{L}(E)$, we have
\begin{align*}
    \rho^*(V) &= \dim(V) +\rho(V^\perp)-\rho(E)\\
    &=\dim(V) +(\lambda_1\rho_1 + \lambda_2\rho_2)(V^\perp)-(\lambda_1\rho_1 + \lambda_2\rho_2)(E).
\end{align*}
Using the fact that $\lambda_1 + \lambda_2 = 1$, we can write $\dim(V) = (\lambda_1 + \lambda_2)\dim(V)$. Hence, substituting this in the equation above, we have
\begin{align*}
    \rho^*(V) &=(\lambda_1+\lambda_2)\dim(V) +(\lambda_1\rho_1 + \lambda_2\rho_2)(V^\perp)-(\lambda_1\rho_1 + \lambda_2\rho_2)(E)\\
    &=\lambda_1(\dim(V) + \rho_1(V^\perp) -\rho_1(E)) + \lambda_2(\dim(V) + \rho_2(V^\perp) -\rho_2(E)) \\
    &= \lambda_1\rho_1^*(V) + \lambda_2\rho_2^*(V).
\end{align*}
From this, we conclude that $\mM^* = \lambda_1\mM_1^* + \lambda_2\mM_2^*$.
\end{proof}

In the following, we study some structural properties of the convex combination of two $q$-polymatroids $\mM_1,\mM_2$.
In particular, we investigate the structure of the flats. By restricting to the convex combination of two $q$-matroids, we can also analyze cyclic spaces and $\mu$-independence (in the latter case the coefficients of the convex combination have to be rational).

\subsection{The flats}\label{subsec: flats}

In this subsection, we show that the flats of two $q$-polymatroids are flats of any convex combination, independently from the choice of coefficients.

Recall that we define a closure operator for $q$-polymatroids as a direct extension of the one for $q$-matroids, as follows.

    \begin{definition}\label{def-closure}
        Let $(\mL(E),\rho)$ be a $q$-polymatroid.
        For each $A \in \mL(E)$, define $$\Cl_\rho(A):=\{x \in \mL(E)\mid \rho(A+x)=\rho(A)\}.$$
    	The \textbf{closure function} of $(\mL(E),\rho)$ is the function defined by
    	\[\cl_\rho:\mathcal{L}(E) \to\mathcal{L}(E), \  A \mapsto \cl_\rho(A)=\sum_{x \in \Cl_\rho(A)} x.
    	\]
    \end{definition}
Let $X\in\mL(E)$. It is easy to see that the closure of $X$ is a flat of the $q$-polymatroid $(\mL(E),\rho)$, and that if $X$ is a flat, then $X = \cl(X)$. Clearly, $\Cl_\rho(F)\supseteq \mL(F)_1$.

\begin{proposition}\label{prop:flats_conv_comb}
Let $\mM_1=(\mL(E),\rho_1), \; \mM_2=(\mL(E),\rho_2)$ be $q$-polymatroids. 
Then, $F\in\mL(E)$ is a flat of any convex combination of $\mM_1$ and $\mM_2$  if and only if $\Cl_{\rho_1}(F)\cap\Cl_{\rho_2}(F)=\mL(F)_1$.
\end{proposition}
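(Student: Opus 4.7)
The plan is to translate the flat condition directly through the definition of the closure set and exploit the strict positivity of the coefficients in a convex combination. Write $\mM=\lambda\mM_1+(1-\lambda)\mM_2$ with rank function $\rho=\lambda\rho_1+(1-\lambda)\rho_2$ for some $\lambda\in(0,1)$. I would first note the (essentially tautological) reformulation: a space $F$ is a flat of a $q$-polymatroid $(\mL(E),\sigma)$ exactly when $\Cl_\sigma(F)=\mL(F)_1$. Indeed, the inclusion $\mL(F)_1\subseteq \Cl_\sigma(F)$ is automatic since $F+x=F$ whenever $x\leq F$, and the reverse inclusion is precisely the negation of the existence of a $1$-dimensional $x\in\mL(E)_1\setminus\mL(F)_1$ with $\sigma(F+x)=\sigma(F)$.

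The heart of the argument is then to identify $\Cl_\rho(F)$ explicitly. For any $x\in\mL(E)_1$, monotonicity of $\rho_1$ and $\rho_2$ gives $\rho_i(F+x)-\rho_i(F)\geq 0$ for $i=1,2$. Since $\lambda>0$ and $1-\lambda>0$, the equation
\[
\rho(F+x)-\rho(F)=\lambda\bigl(\rho_1(F+x)-\rho_1(F)\bigr)+(1-\lambda)\bigl(\rho_2(F+x)-\rho_2(F)\bigr)=0
\]
holds if and only if each of the two non-negative summands vanishes. This yields the identity
\[
\Cl_\rho(F)=\Cl_{\rho_1}(F)\cap\Cl_{\rho_2}(F),
\]
valid for every choice of $\lambda\in(0,1)$ and every $F\in\mL(E)$.

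Combining the two steps, $F$ is a flat of $\mM$ precisely when $\Cl_\rho(F)=\mL(F)_1$, which by the identity above is equivalent to $\Cl_{\rho_1}(F)\cap\Cl_{\rho_2}(F)=\mL(F)_1$. Notice that this characterization does not depend on $\lambda$, so $F$ is a flat of one convex combination of $\mM_1$ and $\mM_2$ if and only if it is a flat of every such convex combination, which justifies the phrase "any convex combination" in the statement.

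There is essentially no obstacle here: the only subtle point to check is the convention that the lowercase letter $x$ in $\Cl_\rho(A)$ denotes a one-dimensional subspace (as per \Cref{not:standing_notation}), so that the flat condition, stated only over $\mL(E)_1\setminus\mL(F)_1$, matches exactly the negation of the equality $\Cl_\rho(F)=\mL(F)_1$. Once that is aligned, the proof is a one-line consequence of monotonicity and the strict positivity of the convex combination coefficients.
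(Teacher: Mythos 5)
Your proof is correct and rests on the same two ingredients as the paper's: monotonicity of $\rho_1,\rho_2$ together with the strict positivity of the convex coefficients, which forces $\rho(F+x)=\rho(F)$ to hold exactly when it holds for both summands. The only difference is packaging — you isolate the clean identity $\Cl_\rho(F)=\Cl_{\rho_1}(F)\cap\Cl_{\rho_2}(F)$ and then translate the flat condition, whereas the paper argues the two implications separately via explicit inequality chains; the mathematical content is identical.
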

\begin{proof}
 Let $\lambda_1,\lambda_2\in\R_{>0}$, be such that $\lambda_1+\lambda_2=1$ and  $\rho$ be the $q$-rank function of $\lambda_1\mM_1+\lambda_2\mM_2$.
 
 ({$\Longleftarrow$}) Assume that $\Cl_{\rho_1}(F)\cap\Cl_{\rho_2}(F)=\mL(F)_1$. We will show that $F$ is a flat of $\lambda_1\mM_1+\lambda_2\mM_2$, i.e.  for every $x\in\mL(E)_1\setminus\mL(F)_1$, we have $\rho(F+x)>\rho(F)$. Let $x\in\mL(E)_1\setminus\mL(F)_1$. By our hypothesis, $x$ cannot belong to both $\Cl_{\rho_1}(F)$ and $\Cl_{\rho_2}(F)$. If $x\not\in\Cl_{\rho_1}(F)$, then 
\begin{align*}
    \rho(F+x) &= \lambda_1\rho_1(F+x) + \lambda_2\rho_2(F+x) \\
    &> \lambda_1\rho_1(F) + \lambda_2\rho_2(F+x) \\
    &\geq \lambda_1\rho_1(F) + \lambda_2\rho_2(F)=\rho(F).
\end{align*}
A similar reasoning can be done for every $x\not\in\Cl_{\rho_2}(F)$. Hence, we conclude that $F$ is a flat of $\lambda_1\mM_1 +\lambda_2\mM_2$.

({$\Longrightarrow$}) Assume by contradiction that $\Cl_{\rho_1}(F)\cap\Cl_{\rho_2}(F)\ne\mL(F)_1$. 
Then there exists some $x \in \mL(E)_1 \setminus \mL(F)_1$ such that $x \in \Cl_{\rho_1}(F)$ and $x \in \Cl_{\rho_2}(F)$. This implies $\rho_1(F+x) = \rho_1(F)$ and $\rho_2(F+x) = \rho_2(F)$. Consequently, 
$$\rho(F+x) = \lambda_1\rho_1(F+x) + \lambda_2\rho_2(F+x) =\lambda_1\rho_1(F) + \lambda_2\rho_2(F) = \rho(F),$$
from which we conclude that $F$ is not a flat of $\lambda_1\mM_1+\lambda_2\mM_2$ (see \Cref{def:flats}). 
\end{proof}

As a result, we obtain the following immediate corollary.

\begin{corollary}\label{coro:flats_conv_comb}
   Let $\mM_1=(\mL(E),\rho_1), \; \mM_2=(\mL(E),\rho_2)$ be $q$-polymatroids.  Let $\mF_i$ denote the collection of flats of $\mM_i$, for $i=1,2$. Then every $F$ in $\mF_1\cup\mF_2$ is a flat of any convex combination of $\mM_1$ and $\mM_2$.
\end{corollary}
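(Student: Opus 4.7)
The plan is to derive this as a direct consequence of Proposition \ref{prop:flats_conv_comb}. That proposition characterizes when $F$ is a flat of a convex combination of $\mM_1$ and $\mM_2$ purely in terms of the condition $\Cl_{\rho_1}(F)\cap\Cl_{\rho_2}(F)=\mL(F)_1$. So to establish the corollary it is enough to verify this condition whenever $F$ belongs to $\mF_1\cup\mF_2$.

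By symmetry I will only consider the case $F\in\mF_1$. The key observation, noted right after Definition \ref{def-closure}, is that $\Cl_{\rho_i}(F)\supseteq\mL(F)_1$ holds automatically for $i=1,2$, simply because $F+x=F$ for any one-dimensional $x\leq F$ and hence $\rho_i(F+x)=\rho_i(F)$. Thus the inclusion $\mL(F)_1\subseteq \Cl_{\rho_1}(F)\cap\Cl_{\rho_2}(F)$ is automatic; only the reverse inclusion requires proof.

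For the reverse inclusion, I would use the flat property of $F$ with respect to $\mM_1$: by the definition of flat, $\rho_1(F+x)>\rho_1(F)$ for every $x\in\mL(E)_1\setminus\mL(F)_1$, which means precisely that no such $x$ lies in $\Cl_{\rho_1}(F)$. Therefore $\Cl_{\rho_1}(F)\subseteq \mL(F)_1$, and intersecting with $\Cl_{\rho_2}(F)$ gives $\Cl_{\rho_1}(F)\cap\Cl_{\rho_2}(F)\subseteq\mL(F)_1$. Combined with the automatic inclusion above, this yields equality, so Proposition \ref{prop:flats_conv_comb} applies and $F$ is a flat of every convex combination of $\mM_1$ and $\mM_2$. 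There is no real obstacle here: the work was done in the proposition, and the corollary is a one-line unpacking of the definition of flat in the stronger hypothesis $F\in\mF_i$.
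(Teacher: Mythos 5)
Your proposal is correct and follows exactly the paper's route: the paper also deduces the corollary from Proposition \ref{prop:flats_conv_comb} by noting that $F\in\mF_1\cup\mF_2$ forces $\Cl_{\rho_1}(F)\cap\Cl_{\rho_2}(F)=\mL(F)_1$. You merely spell out the one-line verification (the flat condition gives $\Cl_{\rho_i}(F)\subseteq\mL(F)_1$ for the relevant $i$, and the reverse inclusion is automatic), which the paper leaves implicit.
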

\begin{proof}
    If $F\in\mF_1\cup\mF_2$ then  $\Cl_{\rho_1}(F)\cap\Cl_{\rho_2}(F)=\mL(F)_1$. The rest follows from \Cref{prop:flats_conv_comb}. 
\end{proof}

\subsection{The cyclic spaces}\label{subsec: cyclic}

The study of cyclic spaces of the convex combination of two $q$-polymatroids is not as immediate. Hence, in this subsection, we focus on the cyclic spaces of the convex combination of two $q$-matroids. 

We introduce the following notation.  Let $\mM_1=(\mL(E),\rho_1), \mM_2=(\mL(E),\rho_2)$ be two $q$-matroids. Let $\mO_i$ denote the collection of cyclic spaces of $\mM_i$, for $i=1,2$. Further, for a space $X\in\mL(E)$ we define $\mN_i(X) =\{H\in\mH(X) : \rho_i(H)<\rho_i(X)\}$, for $i=1,2$.

\begin{proposition}\label{prop:cyc_spaces_noloops}
    Let $\mM_1=(\mL(E),\rho_1), \mM_2=(\mL(E),\rho_2)$ be two $q$-matroids {without loops}. $X\in\mL(E)$ is a cyclic space of any convex combination of $\mM_1$ and $\mM_2$ if and only if $\mN_1(X)\cap\mN_2(X)=\emptyset$.
\end{proposition}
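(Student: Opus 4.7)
The plan is to pin down a couple of elementary numerical facts about $q$-matroids without loops, and then show that the definition of cyclic space (\Cref{def:cyc_spaces}) reduces to purely arithmetic conditions on $\lambda_1,\lambda_2$.

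First I would record two preliminary observations that drive the whole argument.
\emph{(i)} Since $\mM_1,\mM_2$ are loop-free $q$-matroids, $\rho_1(a)=\rho_2(a)=1$ for every line $a\in\mL(E)_1$, so for any convex combination $\rho=\lambda_1\rho_1+\lambda_2\rho_2$ we have $\rho(a)=\lambda_1+\lambda_2=1$.
\emph{(ii)} For each $i=1,2$ and each $H\in\mH(X)$, pick any $a\in\mL(X)_1\setminus\mL(H)_1$; then $X=H+a$ and $H\cap a=\langle 0\rangle$, so submodularity gives $\rho_i(X)\leq \rho_i(H)+\rho_i(a)\leq \rho_i(H)+1$. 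Because $\rho_i$ is integer-valued, $\rho_i(X)-\rho_i(H)\in\{0,1\}$.

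For the direction ($\Leftarrow$), fix arbitrary $\lambda_1,\lambda_2>0$ with $\lambda_1+\lambda_2=1$ and an arbitrary $H\in\mH(X)$. By hypothesis $H\notin\mN_1(X)$ or $H\notin\mN_2(X)$; without loss of generality $\rho_1(H)=\rho_1(X)$. If in addition $\rho_2(H)=\rho_2(X)$, then $\rho(X)=\rho(H)$ and condition (1) of \Cref{def:cyc_spaces} holds. Otherwise (ii) forces $\rho_2(X)-\rho_2(H)=1$, hence $\rho(X)-\rho(H)=\lambda_2\in(0,1)$, while by (i) any line $a\in\mL(X)_1\setminus\mL(H)_1$ satisfies $\rho(a)=1>\lambda_2$; this is exactly condition (2). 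So $X$ is cyclic in this (arbitrary) convex combination.

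For ($\Rightarrow$) I would argue by contrapositive. Assume there exists $H\in\mN_1(X)\cap\mN_2(X)$. By (ii) both $\rho_1(X)-\rho_1(H)=1$ and $\rho_2(X)-\rho_2(H)=1$, so for \emph{every} convex combination $\rho(X)-\rho(H)=\lambda_1+\lambda_2=1$. Then condition (1) of \Cref{def:cyc_spaces} fails because $\rho(X)>\rho(H)$, and condition (2) fails as well because (i) gives $\rho(a)=1\leq\rho(X)-\rho(H)$ for every line $a\in\mL(X)_1\setminus\mL(H)_1$. Hence $X$ is not cyclic in any convex combination, finishing the contrapositive.

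The only step that requires real care is observation (ii); it is the integrality of $q$-matroid rank (combined with no-loops for (i)) that makes the numerical gap between $\rho(X)-\rho(H)$ and $\rho(a)$ either obviously beaten by some $\lambda_i<1$ or obviously unbeatable when it equals $1$. Once (i) and (ii) are in place, both implications are a short case-split on whether $H$ drops the rank in $\mM_1$, in $\mM_2$, or in both.
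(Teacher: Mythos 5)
Your proof is correct and follows essentially the same route as the paper: the key arithmetic in both is that the rank gap $\rho(X)-\rho(H)$ equals some $\lambda_i<1=\rho(a)$ when exactly one of the two ranks drops across $H$, and equals $\lambda_1+\lambda_2=1=\rho(a)$ when both drop, which decides condition (2) of \Cref{def:cyc_spaces}. Your per-hyperplane case split (with the submodularity observation (ii) making the unit-increase property explicit) is a slightly cleaner organization than the paper's case analysis on whether $X$ lies in $\mO_1$, $\mO_2$, both, or neither, but it is not a genuinely different argument.
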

\begin{proof}
     Let $\lambda_1,\lambda_2\in\R_{>0}$ be such that $\lambda_1+\lambda_2=1$ and $\rho$ be the $q$-rank function of $\lambda_1\mM_1+\lambda_2\mM_2$.
    
    $(\Longleftarrow)$  
    Let $X\in\mL(E)$, be such that $\mN_1(X)\cap\mN_2(X)=\emptyset$.
    We distinguish different cases. 
    
    If $X\in\mO_1\cap\mO_2$, for every $H\in\mH(X)$ we have $\rho_i(H)=\rho_i(X)$ for $i=1,2$, since $\mM_1$ and $\mM_2$ are $q$-matroids, and we have
    $$\rho(H) = \lambda_1\rho_1(H) + \lambda_2\rho_2(H) = \lambda_1\rho_1(X) + \lambda_2\rho_2(X) = \rho(X).$$
    Hence, $X$ is a cyclic space for $\lambda_1\mM_1+\lambda_2\mM_2$, according to \Cref{def:cyc_spaces}, part (1).
    
    Now assume that $X\in\mO_1$ and $X\notin \mO_2$ (an analogous reasoning can be done for $X\in\mO_2$ and $X\notin \mO_1$). Then, for every $H\in\mH(X)$ we have $\rho_1(H)=\rho_1(X)$ and there exists $H'\in\mH(X)$ such that $\rho_2(H^\prime)<\rho_2(X)$. More specifically, since $\mM_2$ is a $q$-matroid, for every $a\in\mL(X)_1\setminus\mL(H')_1$ we have $\rho_2(X)=\rho_2(H'+a) = \rho_2(H')+1$. Clearly, part (1) of \Cref{def:cyc_spaces} cannot hold for such~$H'$. We need to verify that for some $a\in\mL(X)_1\setminus\mL(H')_1$, we have that $0<\rho(X)-\rho(H')<\rho(a)$. The first inequality is immediate. For the second one we have
    \begin{align*}
        \rho(X)-\rho(H') &= \lambda_1\rho_1(X) + \lambda_2\rho_2(X) -\lambda_1\rho_1(H') - \lambda_2\rho_2(H') \\
        &=\lambda_1(\rho_1(X) - \rho_1(H')) + \lambda_2(\rho_2(X) - \rho_2(H')) \\
        &=\lambda_2 < 1 = \lambda_1+\lambda_2 = \lambda_1\rho_1(a) + \lambda_2\rho_2(a) =\rho(a),
    \end{align*}
    for every $a\in\mL(X)_1\setminus\mL(H')_1$.
    
    Finally, assume that $X\notin\mO_1\cup\mO_2$, but $\mN_1(X)\cap\mN_2(X)=\emptyset$. Then,
    for every $H\in\mH(X)$, we have that either $\rho_1(H)<\rho_1(X)$ or $\rho_2(H)<\rho_2(X)$, but not both. In particular, for each $H\in\mH(X)$ we can argue as in the previous case.

   $(\Longrightarrow)$ Assume by contradiction that there exists some $H'\in\mN_1(X)\cap\mN_2(X)$. Then $X$ cannot be a cyclic space of $\lambda_1\mM_1+\lambda_2\mM_2$, since for all  $a\in\mL(X)_1\setminus\mL(H')_1$ we would have
    \begin{align*}
        \rho(X)-\rho(H') &= \lambda_1\rho_1(X) + \lambda_2\rho_2(X) -\lambda_1\rho_1(H') - \lambda_2\rho_2(H') \\
        &=\lambda_1(\rho_1(X) - \rho_1(H')) + \lambda_2(\rho_2(X) - \rho_2(H')) \\
        &=\lambda_1+\lambda_2 =\lambda_1\rho_1(a) +\lambda_2\rho_2(a) =\rho(a).
        \end{align*}
\end{proof}

As an immediate result we obtain the following corollary. 

\begin{corollary}\label{coro:cyc_spaces_noloops}
     Let $\mM_1, \ \mM_2$ be two $q$-matroids without loops. 
     Then every $X\in\mO_1\cup\mO_2$ is a cyclic space of any convex combination of $\mM_1$ and $\mM_2$.
\end{corollary}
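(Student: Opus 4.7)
The plan is to derive the corollary directly from \Cref{prop:cyc_spaces_noloops} by observing that for $q$-matroids, membership in $\mO_i$ forces the set $\mN_i(X)$ to be empty. More precisely, I would first recall that after \Cref{def:cyc_spaces} the authors explicitly note that for $q$-matroids condition~(2) in the definition of a cyclic space is superfluous. Therefore, if $X\in\mO_i$, then $\rho_i(H)=\rho_i(X)$ for every $H\in\mH(X)$, and so by definition $\mN_i(X)=\emptyset$.

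With this observation in hand, the argument is immediate: assume without loss of generality that $X\in\mO_1$ (the case $X\in\mO_2$ is symmetric). Then $\mN_1(X)=\emptyset$, which trivially gives $\mN_1(X)\cap\mN_2(X)=\emptyset$. Since $\mM_1$ and $\mM_2$ are $q$-matroids without loops, \Cref{prop:cyc_spaces_noloops} applies and yields that $X$ is a cyclic space of every convex combination $\lambda_1\mM_1+\lambda_2\mM_2$ with $\lambda_1,\lambda_2\in\R_{>0}$ and $\lambda_1+\lambda_2=1$.

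There is essentially no obstacle here: the only subtlety to flag is the reliance on the $q$-matroid simplification of the cyclic space definition (the hyperplane rank-drop is always exactly $1$ when it occurs), which is what makes $X\in\mO_i$ equivalent to $\mN_i(X)=\emptyset$. For a genuine $q$-polymatroid this equivalence would fail and the corollary would not follow so cleanly, which is precisely why \Cref{prop:cyc_spaces_noloops} is stated for $q$-matroids rather than general $q$-polymatroids.
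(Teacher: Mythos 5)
Your argument is correct and matches the paper's intended derivation: the corollary is stated as an immediate consequence of Proposition~\ref{prop:cyc_spaces_noloops}, obtained exactly as you do by noting that for a $q$-matroid condition~(2) of Definition~\ref{def:cyc_spaces} never holds, so $X\in\mO_i$ forces $\mN_i(X)=\emptyset$ and hence $\mN_1(X)\cap\mN_2(X)=\emptyset$. Your closing remark about why the loop-free $q$-matroid hypothesis matters is also consistent with the paper's discussion.
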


\begin{remark}
    If $\mM_1=(\mL(E),\rho_1)$ or $\mM_2=(\mL(E),\rho_2)$ have loops we cannot guarantee that all the spaces in $\mO_1\cup\mO_2$ are cyclic in a convex combination of $\mM_1$ and $\mM_2$. Let $\lambda_1,\lambda_2\in\R_{>0}$, be such that $\lambda_1+\lambda_2=1$ and $\rho$ be the $q$-rank function of $\lambda_1\mM_1+\lambda_2\mM_2$.
    Assume that $\mM_2$ has no loops, and $X$ is the loop space of $\mM_1$. Then $X\in\mO_1$, and assume that $X\notin\mO_2$. Let $H\in\mH(X)$ be such that $\rho_2(H)=\rho_2(X)-1$. We have that
    \begin{align*}
        \rho(X)-\rho(H) &= \lambda_1\rho_1(X) + \lambda_2\rho_2(X) -\lambda_1\rho_1(H) - \lambda_2\rho_2(H) \\
        &=\lambda_1(\rho_1(X) - \rho_1(H)) + \lambda_2(\rho_2(X) - \rho_2(H)) \\
        &=\lambda_2 = \lambda_1\rho_1(a) + \lambda_2\rho_2(a) =\rho(a),
    \end{align*}
    for every $a\in\mL(X)_1\setminus\mL(H)_1$. The last equality is because $\rho_1(a)=0$ and $\rho_2(a)=1$. In this case,~$X$ cannot be a cyclic space of $\lambda_1\mM_1+\lambda_2\mM_2$.
\end{remark}

Corollaries \ref{coro:flats_conv_comb} and \ref{coro:cyc_spaces_noloops} immediately yield us the following property of cyclic flats of a convex combination of the $q$-matroids.

\begin{corollary}
    Let $\mM_1, \mM_2$ be two $q$-matroids without loops. Let $\mZ_i$ denote the collection of cyclic flats of $\mM_i$, for $i=1,2$. Then every $X\in\mZ_1\cup\mZ_2$ is a cyclic flat of any convex combination of $\mM_1$ and $\mM_2$.
\end{corollary}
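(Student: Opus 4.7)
The plan is simply to assemble the two immediately preceding corollaries, using the definition that a cyclic flat is by definition a space that is simultaneously a flat and a cyclic space. Let $\lambda_1,\lambda_2\in\R_{>0}$ with $\lambda_1+\lambda_2=1$, and let $X\in\mZ_1\cup\mZ_2$. Without loss of generality assume $X\in\mZ_1$, so that $X$ is both a flat and a cyclic space of $\mM_1$; in particular $X\in\mF_1\cup\mF_2$ and $X\in\mO_1\cup\mO_2$.

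The first step is to invoke Corollary \ref{coro:flats_conv_comb}, which holds for arbitrary $q$-polymatroids with no loop-freeness hypothesis, to conclude that $X$ is a flat of $\lambda_1\mM_1+\lambda_2\mM_2$. The second step is to invoke Corollary \ref{coro:cyc_spaces_noloops}, whose hypothesis that both $\mM_1$ and $\mM_2$ have no loops is exactly the loop-freeness assumption granted in the statement, to conclude that $X$ is a cyclic space of $\lambda_1\mM_1+\lambda_2\mM_2$. Combining the two conclusions yields that $X$ is a cyclic flat of the convex combination, which is the desired claim.

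No real obstacle arises: the proof is a direct concatenation of the two preceding results, and the only subtlety to flag is that the no-loops hypothesis in the statement is precisely what permits the application of Corollary \ref{coro:cyc_spaces_noloops} (whereas Corollary \ref{coro:flats_conv_comb} is used without restriction).
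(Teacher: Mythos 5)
Your proof is correct and is exactly the argument the paper intends: the paper states that the result follows immediately by combining Corollary \ref{coro:flats_conv_comb} and Corollary \ref{coro:cyc_spaces_noloops}, which is precisely your concatenation. Your observation that the loop-freeness hypothesis is needed only for the cyclic-space part is accurate and matches the paper's setup.
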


By following \cite[Def. 7.7]{gluesing2024decompositions}, we introduce the notion of \emph{full} $q$-polymatroid.

\begin{definition}
    Let $\mM$ be a $q$-polymatroid on $E$. We say that $\mM$ is \textbf{full} if $\langle 0\rangle$ is a flat and~$\F_q^n$ is a cyclic space of $\mM$.
\end{definition}

The following result is another direct consequence of \Cref{coro:flats_conv_comb} and \Cref{coro:cyc_spaces_noloops}.

\begin{proposition}
    Let $\mM_1,\mM_2$ be two $q$-matroids with ground space $E$. Assume that $\mM_1$ is full and $\mM_2$ has no loops. Then any convex combination $\mM$ of $\mM_1$ and $\mM_2$ is full. 
\end{proposition}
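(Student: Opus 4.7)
The plan is to verify the two defining conditions of fullness for $\mM$ separately, each following directly from one of the two corollaries of the previous subsections.

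First, I would observe that $\langle 0\rangle$ is a flat of $\mM$. Since $\mM_1$ is full by hypothesis, $\langle 0\rangle \in \mF_1$, and then Corollary \ref{coro:flats_conv_comb} immediately gives that $\langle 0\rangle$ is a flat of every convex combination of $\mM_1$ and $\mM_2$, so in particular of $\mM$.

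Second, I would show that $E$ is a cyclic space of $\mM$. The key preliminary remark here is that the fullness of $\mM_1$ forces $\mM_1$ itself to be loop-free: indeed, $\langle 0\rangle \in \mF_1$ means $\rho_1(v) > \rho_1(\langle 0\rangle) = 0$ for every $v \in \mL(E)_1$, which is exactly the statement that $\mM_1$ has no loops. Combined with the standing hypothesis that $\mM_2$ has no loops, both $\mM_1$ and $\mM_2$ fit the assumptions of Corollary \ref{coro:cyc_spaces_noloops}. Since $E \in \mO_1$ by the fullness of $\mM_1$, that corollary yields $E \in \mO(\mM)$, and we conclude that $\mM$ is full.

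There is essentially no obstacle beyond the small observation that the condition ``$\langle 0\rangle$ is a flat'' appearing in the definition of \emph{full} coincides with ``the $q$-matroid has no loops'', so the two corollaries apply in concert with no additional hypotheses needed.
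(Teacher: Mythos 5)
Your proposal is correct and follows the same route as the paper: apply Corollary \ref{coro:flats_conv_comb} to get that $\langle 0\rangle$ remains a flat, note that fullness of $\mM_1$ forces it to be loop-free so that Corollary \ref{coro:cyc_spaces_noloops} applies, and conclude that $E$ remains cyclic. The observation that ``$\langle 0\rangle$ is a flat'' is equivalent to loop-freeness is exactly the point the paper also relies on.
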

\begin{proof}
    We know that the zero space $\langle 0\rangle$ is a flat of both $\mM_1$ and $\mM_2$, so by \Cref{coro:flats_conv_comb} it is a flat of $\mM$. Moreover $\mM_1$ is full, which means in particular that it has no loops and $E$ is a cyclic space of this $q$-matroid. By \Cref{coro:cyc_spaces_noloops} we then get that $E$ is cyclic in $\mM$.
\end{proof}

\subsection{The $\mu$-independent spaces}\label{subsec: mu_indeps}

In this subsection,  we turn to the investigation of the $\mu$-independent spaces of rational $q$-polymatroids arising as convex combination of two $q$-matroids. For this subsection, we let $\mM_1=(\mL(E),\rho_1), \mM_2=(\mL(E),\rho_2)$ be two $q$-matroids. Let $\mI_i$ denote the collection of independent spaces of $\mM_i$, for $i=1,2$. Moreover, we set $\mM$ to be the convex combination of $\mM_1$ and $\mM_2$ with rational coefficient $\lambda = \frac{a}{b}$, where $a\geq 1 $ and $b>a$, as in \Cref{def: conv_combi}. Then $\mM$ admits a denominator $\mu$, since $\mM_1$ and $\mM_2$ correspond to integer points in $\RP$.

By definition of $\mM$ we have that $b$ is always a denominator for $\mM$, but in general it might not be principal. 
However, we have the following result that provides sufficient conditions for $\mu$ to be principal.

\begin{theorem}
    Assume that there exists a one-dimensional subspace $x\in\mL(E)$ such that $x$ is a strong independent space in $\mM$. Then $b$ is a principal denominator of $\mM$.
\end{theorem}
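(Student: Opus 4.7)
The plan is to show that $\mu$ equals the principal denominator of $\mM$. Since $\mu$ is already a denominator, only minimality is at issue. Writing $\lambda = a/\mu$ in lowest terms so that $\gcd(a,\mu)=1$, one has for every $X\in\mL(E)$ the key congruence
\[
\mu\rho(X) \;=\; a\rho_1(X) + (\mu-a)\rho_2(X) \;\equiv\; a\bigl(\rho_1(X)-\rho_2(X)\bigr) \pmod{\mu}.
\]
It therefore suffices to exhibit a subspace $Y\in\mL(E)$ with $\gcd(\rho_1(Y)-\rho_2(Y),\,\mu)=1$: for such $Y$, $\rho(Y)$ has denominator exactly $\mu$ in lowest terms, and any denominator $\mu'$ of $\mM$ would have to satisfy $\mu\mid\mu'$, forcing $\mu$ to be principal.

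To locate such a $Y$, I would first unpack the hypothesis: strong independence of the $1$-dimensional subspace $x$ in $\mM$ amounts to $\rho(x)=1$, and since $\rho_i(x)\in\{0,1\}$ for the $q$-matroids $\mM_i$ and $\lambda\in(0,1)$, this forces $\rho_1(x)=\rho_2(x)=1$. Thus $\mM_1$ and $\mM_2$ agree on $x$, yet by Definition~\ref{def: conv_combi} the two $q$-matroids are different, so their rank functions must disagree somewhere. Choose $Y$ of smallest dimension with $\rho_1(Y)\neq\rho_2(Y)$ and fix any hyperplane $Y'\lessdot Y$ (if $\dim(Y)=1$, take $Y'=\langle 0\rangle$). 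By the minimality of $\dim(Y)$ one has $\rho_1(Y')=\rho_2(Y')$. For any $1$-dimensional $y\leq Y$ with $y\not\leq Y'$ we have $Y'+y=Y$ and $Y'\cap y=\langle 0\rangle$, so submodularity of each $\rho_i$ yields
\[
\rho_i(Y) \;\leq\; \rho_i(Y')+\rho_i(y) \;\leq\; \rho_i(Y')+1,
\]
while monotonicity gives $\rho_i(Y)\geq\rho_i(Y')$. Hence $\rho_i(Y)-\rho_i(Y')\in\{0,1\}$ for $i=1,2$, so $\rho_1(Y)-\rho_2(Y)\in\{-1,0,1\}$; being nonzero, it must equal $\pm 1$. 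The congruence then gives $\mu\rho(Y)\equiv\pm a\pmod{\mu}$, which is coprime to $\mu$.

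The main obstacle is cleanly passing from the global inequality $\mM_1\neq\mM_2$ to a subspace on which the two rank functions differ by \emph{exactly one}; this is what the minimum-dimension step combined with the submodularity/monotonicity estimate at a hyperplane delivers. The strong-independence hypothesis anchors the argument by guaranteeing a concrete $1$-dimensional space on which $\mM_1$ and $\mM_2$ coincide nontrivially, ensuring the convex combination is genuine rather than degenerate. An alternative approach could propagate along an ascending chain starting at $x$ and track the evolution of $\rho_1-\rho_2$ step by step, but the essential ingredient would remain the same codimension-one difference analysis via submodularity.
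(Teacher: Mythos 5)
There is a genuine gap, and it sits exactly where the theorem's hypothesis is supposed to do its work. In this paper a denominator is any $\mu'\in\mathbb{Q}_{\geq 0}$ with $\mu'\rho(X)\in\mathbb{N}_0$ for all $X$, and the principal denominator is the smallest such \emph{rational} number. Your decisive step ``$\rho(Y)$ has denominator exactly $\mu$ in lowest terms, hence any denominator $\mu'$ satisfies $\mu\mid\mu'$, forcing $\mu$ to be principal'' is only valid once you know that every denominator is an integer; for fractional $\mu'$ it fails. Concretely, take $\mM_1=\mU_{0,n}(q)$ (all loops), $\mM_2=\mU_{k,n}(q)$ with $k\geq 1$, and $\lambda=1/3$, so $\rho=\tfrac{2}{3}\rho_2$ and $\mu=3$: there are spaces with $\rho_1(Y)-\rho_2(Y)=-1$, coprime to $\mu$, yet $\mu'=3/2$ is a denominator and the principal denominator is $3/2\neq 3$. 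So exhibiting a $Y$ with $\gcd(\rho_1(Y)-\rho_2(Y),\mu)=1$ does \emph{not} by itself force $\mu$ to be principal. Of course this configuration violates the hypothesis of the theorem (no one-dimensional space is strong independent in that $\mM$), but that is precisely the point: the role of the strong independent $x$ is to kill fractional denominators, since $\rho(x)=1$ gives $\mu'=\mu'\rho(x)\in\mathbb{N}_0$ for every denominator $\mu'$ (this is the paper's appeal to \cite[Rem.~2.7]{gluesing2022q}). You have $\rho(x)=1$ in hand but use it only to deduce $\rho_1(x)=\rho_2(x)=1$ and ``genuineness'' of the combination, which contributes nothing: the existence of a space where $\rho_1\neq\rho_2$ already follows from $\mM_1\neq\mM_2$. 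The fix is one line with your own ingredients, but as written the hypothesis is never used for its essential purpose and the final inference does not follow.

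Apart from this, your construction of the space on which the two rank functions differ by exactly one is correct and is a nice alternative to the paper's route: the paper takes a circuit of one $q$-matroid that is independent in the other (so the rank gap is $1$ by definition), while you take a minimal-dimensional space where the ranks differ and squeeze the gap to $\pm 1$ via monotonicity and submodularity at a hyperplane, then conclude through the congruence $\mu\rho(Y)\equiv a(\rho_1(Y)-\rho_2(Y))\pmod{\mu}$. Once the integrality of denominators is established from $\rho(x)=1$, either construction finishes the proof.
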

\begin{proof}
By \cite[Rem. 2.7]{gluesing2022q}, if $x\in\mL(E)$ is a strong independent space in $\mM$, then any denominator of $\mM$ is integer. By the definition of $\mM$ we have that $b$ is always a denominator for $\mM$. Since the independent spaces determine a $q$-matroid (see e.g. \cite{byrne2022constructions}), we see that there exists a space $X\in\mL(E)$, such that $X$ is, without loss of generality, independent in $\mM_1$, but not in $\mM_2$. Then, there is a space $C\leq X$, which is a circuit in $\mM_2$, but $C$ is independent in $\mM_1$. Then $\rho_2(C)=\dim(C)-1$ and $\rho_1(C) = \dim(C)$. 
Let $\rho$ be the rank function of $\frac{a}{b}\mM_1 + (1-\frac{a}{b})\mM_2$.
Then,
$$\rho(C) = \frac{a}{b}\rho_1(C) + \left(1-\frac{a}{b}\right)\rho_2(C) = \rho_2(C) + \frac{a}{b}\left(\rho_1(C) - \rho_2(C)\right) = \rho_2(C) + \frac{a}{b}.$$
Then, $b$ is the smallest integer such that $\mu\rho(C)$ is an integer by the generalized Euclidean lemma. Hence, we conclude that $\mu$ is the principal denominator of $\mM$.
\end{proof}

The following proposition describes some of the $b$-independent spaces of $\mM$, and it will be useful later on.

\begin{proposition}\label{prop: mu_indeps_conv_comb}
$X\in\mI_1\cup\mI_2$ is a $b$-independent space of $\mM$, i.e., $\mI_1\cup\mI_2\subseteq\mI_b(\mM)$.
\end{proposition}
\begin{proof}
Let $\rho$ be the $q$-rank function of $\mM$. We distinguish different cases. First assume that $X\in\mI_1\cap\mI_2$, then we know that $\rho_1(Y)=\rho_2(Y)=\dim(Y)$ for every $Y\leq X$. Thus
$$\rho(Y)=\frac{a}{b}\rho_1(Y)+\left(1-\frac{a}{b}\right)\rho_2(Y)=\dim(Y),
$$
and we get $\rho(Y)\geq\frac{\dim(Y)}{b}$. Therefore $X$ is $b$-independent. Next assume that $X\in\mI_1$ but $X\not\in\mI_2$. This means that we have $\rho_1(Y)=\dim(Y)$ for all $Y\leq X$. Now since $\rho_2(Y)\geq 0$ we compute
$$\rho(Y)=\frac{a}{b}\rho_1(Y)+\left(1-\frac{a}{b}\right)\rho_2(Y)>\frac{a}{b}\rho_1(Y)=\frac{a}{b}\dim(Y)\geq\frac{\dim(Y)}{b},
$$
where the last inequality follows since $a\geq 1$.
Thus, also in this case, $X$ is $b$-independent. Finally, for the case $X\in\mI_2$ but $X\not\in\mI_1$ we can argue in the same way as above.
\end{proof}

\begin{remark}\label{rem: mu_indep_conv_comb}
    Note that there exist $\mu$-independent spaces in $\mM$ which are neither independent in $\mM_1$ nor in $\mM_2$. However, these spaces are not easy to classify, since the $\mu$-independence in these cases highly depends on the denominator $\mu$, the ambient dimension $n$ and the ranks of~$\mM_1$ and $\mM_2$. We refer to \Cref{ex:unif_sum}, for an illustration of this property.
\end{remark}

The next corollary is a consequence of \Cref{prop: mu_indeps_conv_comb} and characterizes some properties of $\mu$-dependent spaces and $\mu$-circuits, for a general denominator $\mu$. We fix the notation $\mD_i$ for the collection of dependent spaces and $\C_i$ for the collection of circuits of $\mM_i$, for $i=1,2$.

\begin{corollary}
    Let $\mD_\mu(\mM)$ and $\C_\mu(\mM)$ be the collections of $\mu$-dependent spaces and $\mu$-circuits of $\mM$, respectively. Then it holds that
    \begin{enumerate}
        \item $\mD_\mu(\mM)\subseteq\mD_1\cap\mD_2$ and
        \item $(\C_1\cup\C_2)\cap\mD_\mu(\mM)\subseteq\C_\mu(\mM)$.
    \end{enumerate}
\end{corollary}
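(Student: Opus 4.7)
The plan is to derive both inclusions as essentially immediate consequences of Proposition \ref{prop: mu_indeps_conv_comb}, since they are contrapositive reformulations of it.

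For part (1), I would argue by taking complements. Proposition \ref{prop: mu_indeps_conv_comb} states $\mI_1\cup\mI_2\subseteq\mI_\mu(\mM)$. A space $X\in\mL(E)$ is in $\mD_\mu(\mM)$ precisely when $X\notin\mI_\mu(\mM)$, and similarly $X\in\mD_i$ precisely when $X\notin\mI_i$ for $i=1,2$. Hence, if $X\in\mD_\mu(\mM)$, then the proposition forces $X\notin\mI_1$ and $X\notin\mI_2$, giving $X\in\mD_1\cap\mD_2$. This is the claim of (1).

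For part (2), I would let $C\in(\C_1\cup\C_2)\cap\mD_\mu(\mM)$. By symmetry, I may assume $C\in\C_1$ (the argument for $C\in\C_2$ being identical). Since $C$ is a circuit of $\mM_1$, every proper subspace $Y<C$ satisfies $Y\in\mI_1$. By Proposition \ref{prop: mu_indeps_conv_comb} applied to each such $Y$, we have $Y\in\mI_\mu(\mM)$. Combined with the standing assumption that $C\in\mD_\mu(\mM)$, this shows $C$ is a $\mu$-dependent space all of whose proper subspaces are $\mu$-independent. By \Cref{def:indep_spaces}, $C\in\C_\mu(\mM)$, which proves (2).

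There is no real obstacle here: once Proposition \ref{prop: mu_indeps_conv_comb} is in hand, both statements are formal consequences. The only point to be slightly careful about is the symmetry step in (2) — one should note that the role of $\mM_1$ and $\mM_2$ in Proposition \ref{prop: mu_indeps_conv_comb} is symmetric, so indeed the same argument applies when $C\in\C_2$ without needing to redo the computation.
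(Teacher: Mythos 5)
Your proposal is correct and follows essentially the same route as the paper: part (1) by taking complements in Proposition \ref{prop: mu_indeps_conv_comb}, and part (2) by noting that all proper subspaces of a circuit of $\mM_i$ lie in $\mI_i$ and hence in $\mI_\mu(\mM)$. The only difference is cosmetic: the paper splits part (2) into three cases ($C$ in both $\C_1$ and $\C_2$, or in exactly one), whereas you correctly observe that membership in a single $\C_i$ already suffices, collapsing the cases by symmetry.
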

\begin{proof}
    The first statement follows by taking the complement of both sides of the expression $\mI_1\cup\mI_2\subseteq\mI_\mu(\mM)$ from \Cref{prop: mu_indeps_conv_comb}. For the second statement we have to consider three cases. Assume first that $X\in(\C_1\cap\C_2)\cap\mD_\mu$. So we know that $X$ is $\mu$-dependent and all its proper subspaces $Y<X$ are contained in $\mI_1\cap\mI_2$, thus they are all $\mu$-independent by \Cref{prop: mu_indeps_conv_comb}. This implies that $X$ is a $\mu$-circuit. Now we assume that $X\in\C_1\cap\mD_\mu(\mM)$ but $X\not\in\C_2$. Once more we get that $X$ is $\mu$-dependent, moreover we know that all its proper subspaces $Y<X$ are contained in $\mI_1$. Then by \Cref{prop: mu_indeps_conv_comb} they are $\mu$-independent and so $X$ is a $\mu$-circuit. Lastly, for the case $X\in\C_2\cap\mD_\mu(\mM)$ but $X\not\in\C_1$, we can argue as above.
\end{proof}

A more precise characterization of the collections $\mD_\mu(\mM)$ and $\C_\mu(\mM)$ would be more involved due to the issues addressed in \Cref{rem: mu_indep_conv_comb}.

\section{The convex combination of paving \emph{q}-matroids}\label{sec: paving_conx_comb}

In this section we focus on the convex combination of special $q$-matroids, namely those that are \emph{paving}. We recall that a $q$-matroid $\mM$ is said to be paving when every circuit $C$ satisfies $\dim(C)\geq\text{rank}(\mM)$; see \cite{luerssen2023representabilitydirectsum}. We generalize the \emph{pavingness} property for rational $q$-polymatroids. Afterwards, we show that any convex combination of paving $q$-matroids with the same ranks gives rise to paving $q$-polymatroids. Furthermore, we discuss the flats and cyclic spaces of such a convex combination.

We start with an explicit construction of paving $q$-matroids, which was proven in \cite{gluesing2022q}.

\begin{proposition}\cite[Prop. 4.6.]{gluesing2022q}\label{prop: paving_construction}
    Let $1\leq k\leq n-1$. Let $\mathcal{S}\subseteq\mL(E)_{k}$, such that for every two distinct $V,W\in\mathcal{S}$ we have $\dim(V\cap W)\leq k-2$. Define the map
    \[        \rho_\mS:\mL(E)\rightarrow\mathbb{Z}_{\geq 0},\quad V\mapsto
        \begin{cases}
            \;\hfil k-1\;&\text{if }V\in\mathcal{S},\\
            \;\hfil \min\{\dim V,k\}\;&\text{otherwise}. 
        \end{cases}
    \]
    Then $\mM_\mS=(\mL(E),\rho_\mS)$ is a paving $q$-matroid of rank $k$, whose circuits of rank $k-1$ are the subspaces in $\mathcal{S}$. We call $\mM_\mS$ the \textbf{paving $q$-matroid induced by $\mS$}.
\end{proposition}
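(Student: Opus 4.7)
The plan is to verify in turn: (R1), (R2), (R3), integrality, the paving property, and the fact that the rank-$(k-1)$ circuits coincide with $\mS$. A convenient decomposition makes the bookkeeping transparent: write $f(V):=\min\{\dim V, k\}$, the rank function of the uniform $q$-matroid $\mU_{k,n}(q)$, which is known to be a $q$-rank function, and note that
\[
  \rho_\mS(V) \;=\; f(V) \;-\; \mathbf{1}_{V\in\mS},
\]
because every $V\in\mS$ has $\dim V = k$ and hence $f(V)=k$, while $\rho_\mS(V)=k-1$. Boundedness (R1) and integrality are then immediate, and monotonicity (R2) follows from the monotonicity of $f$ together with a short case analysis on whether $V,W\in\mS$, using that $V\leq W$ with $V\in\mS$ forces $\dim W\geq k$ and hence $\rho_\mS(W)\geq k-1$.

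The main step is submodularity (R3), which I expect to be the only delicate part. Using the decomposition above and submodularity of $f$, the inequality reduces to
\[
  \mathbf{1}_{A\in\mS}+\mathbf{1}_{B\in\mS}-\mathbf{1}_{A\cap B\in\mS}-\mathbf{1}_{A+B\in\mS}\;\leq\; f(A)+f(B)-f(A\cap B)-f(A+B),
\]
where the right-hand side is $\geq 0$. I would split into cases by the value of the left-hand side. If neither $A$ nor $B$ lies in $\mS$ it is $\leq 0$, and there is nothing to prove. If both lie in $\mS$ and are distinct, the hypothesis $\dim(V\cap W)\leq k-2$ for $V\neq W$ in $\mS$ forces $A\cap B\notin\mS$ and $\dim(A+B)\geq k+2$, so $A+B\notin\mS$ too; the left-hand side equals $2$, while the right-hand side equals $k-\dim(A\cap B)\geq 2$. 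In the remaining case exactly one of $A,B$ lies in $\mS$: if the two are comparable the left-hand side is $\leq 0$, and if they are incomparable one checks that $A+B$ has dimension $>k$ (so $A+B\notin\mS$) and $A\cap B$ is a proper subspace of the element of $\mS$ (so $A\cap B\notin\mS$), giving left-hand side $=1$ and right-hand side $\geq 1$.

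Once (R3) is in hand, the rank is $\rho_\mS(E)=f(E)=k$ since $\dim E = n > k-1$ means $E\notin\mS$. For the paving property, any $V$ with $\dim V < k$ satisfies $V\notin\mS$, hence $\rho_\mS(V)=\dim V$, so $V$ is independent; thus every dependent space and in particular every circuit has dimension $\geq k$. Finally, to characterize the rank-$(k-1)$ circuits, if $C$ is such a circuit with $\dim C > k$, then for any hyperplane $H<C$ one has $\dim H \geq k$, so $\rho_\mS(H)\geq k > k-1 =\rho_\mS(C)$, which contradicts monotonicity; hence $\dim C = k$ and $\rho_\mS(C)=k-1$ forces $C\in\mS$. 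Conversely, every $C\in\mS$ has $\dim C = k$, $\rho_\mS(C)=k-1$, and every proper subspace $D<C$ satisfies $\dim D\leq k-1$ so $D\notin\mS$ and $\rho_\mS(D)=\dim D$, making $D$ independent; thus $C$ is a circuit of rank $k-1$.
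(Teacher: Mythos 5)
The paper does not actually prove this proposition---it is imported verbatim from \cite[Prop.~4.6]{gluesing2022q}---so there is no in-text argument to compare yours against; your proof stands as a self-contained verification. It is correct, and the decomposition $\rho_\mS = f - \mathbf{1}_{\mS}$ with $f$ the uniform rank function is a clean way to organize the work: reducing (R3) to comparing the indicator defect against the uniform defect, and invoking the hypothesis $\dim(V\cap W)\leq k-2$ precisely in the one case where the left-hand side can reach $2$, is exactly the right idea, and all the numerical bounds you state check out. Two spots are terser than they should be. First, in the incomparable sub-case with exactly one of $A,B$ in $\mS$, the claim that the right-hand side is $\geq 1$ deserves its one line: it equals $\min\{\dim B,k\}-\dim(A\cap B)$, which is $\geq 1$ because $A\cap B$ is a proper subspace both of $A$ (which has dimension $k$) and of $B$. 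Second, and more importantly, in the circuit characterization the implication ``$\dim H\geq k$, so $\rho_\mS(H)\geq k$'' is not valid from the definition of $\rho_\mS$ alone: a $k$-dimensional $H$ could lie in $\mS$ and then has rank $k-1$. You need the additional observation that $H$, being a proper subspace of the circuit $C$, is independent (equivalently, that no element of $\mS$ can be a proper subspace of a circuit, since every element of $\mS$ is dependent), whence $\rho_\mS(H)=\dim H\geq k$. With that sentence added the argument is complete; alternatively, one can bypass the dimension count entirely by noting that $\rho_\mS(C)=k-1$ with $C\notin\mS$ forces $\dim C=k-1$, which makes $C$ independent, a contradiction.
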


\begin{remark}
    Not all paving $q$-matroids arise  from \Cref{prop: paving_construction}. We will restrict our study only to this class, since we know exactly the ``shape" of their corresponding points in the $q$-rank polytope $\RP$.
\end{remark}

A special example of paving $q$-matroids constructed as in \Cref{prop: paving_construction} is given below.

\begin{example}
    Let $\mS=\emptyset$ and $1\leq k\leq n-1$. Then the $q$-rank function from \Cref{prop: paving_construction} simplifies to $\rho(V)=\min\{\dim V,k\}$, for all $V\in\mL(E)$. This is precisely the uniform $q$-matroid $\mU_{k,n}(q)$, having all the $(k+1)$-dimensional subspaces of $\F_q^n$ as circuits. So, we can regard the uniform $q$-matroid $\mU_{k,n}(q)$ as $\mM_\emptyset$, i.e. the paving $q$-matroid induced by $\emptyset$. 
\end{example}

The next definition introduces the notion of pavingness for the more general case of rational $q$-polymatroids.

\begin{definition}\label{def: mu_paving}
    Let $\mM$ be a rational $q$-polymatroid with ground space $E$ and denominator $\mu$. Then we say that $\mM$ is \textbf{$\mu$-paving} if for all $\mu$-circuits $C\in\mL(E)$ it holds that $\dim(C)\geq \max\{\dim(V)\;\mid\; V\in\mI_\mu(\mM)\}$.
\end{definition}

\begin{remark}
    We want to emphasize that this notion of pavingness is not a simple generalization of the same definition for $q$-matroids. The reason is that the rank of a $q$-polymatroid is given by the rank of its $\mu$-bases, which are the maximum-dimensional $\mu$-independent spaces. However, this rank is not necessarily equal to their dimension, which implies
    \[
        \max\{\dim(V)\;\mid\; V\in\mI_\mu(\mM)\}\geq\text{rank}(\mM).
    \]
    Therefore, if a $q$-polymatroid is paving according to \Cref{def: mu_paving}, then the dimension of every $\mu$-circuit is greater than or equal to the rank of the $q$-polymatroid. A detailed discussion about $\mu$-independent spaces can be found in \cite[Secs. 3--4]{gluesing2022independent}.       
\end{remark}

For the remainder of the section, we fix the following setting. Let $2\leq k\leq n-1$ and $\mS_1,\mS_2\subseteq \mL(E)_{k}$, with the property described in \Cref{prop: paving_construction}. Assume for simplicity that $\mS_1\cap\mS_2=\emptyset$ (we will address the other case later on). For $i=1,2$ let $\mM_i=\mM_{\mS_i}$ be the paving $q$-matroid induced by $\mS_i$ and $p_{\mM_i}$ be its corresponding point in $\RP$. Moreover, let $\mM = \frac{a}{b}\mM_1 +\left(1-\frac{a}{b}\right)\mM_2$ be the convex combination of $\mM_1$ and $\mM_2$ with rational coefficient $\frac{a}{b}$, where $a\geq 1$ and $b>a$, as in \Cref{def: conv_combi} and consider the denominator $b$ for $\mM$. Finally, denote by $p_\mM\in\RP$ the corresponding point of $\mM$. 

The following theorem shows that the paving property is preserved under taking rational convex combinations.

\begin{theorem}\label{thm: mu_paving_conv_comb}
     $\mM = \frac{a}{b}\mM_1 +\left(1-\frac{a}{b}\right)\mM_2$ is $b$-paving.
\end{theorem}

Before proving \Cref{thm: mu_paving_conv_comb} we first collect some useful properties concerning the shape of the points $p_{\mM_1}, \; p_{\mM_2}$ and $p_\mM$.

\begin{remark}\label{rem: shape_paving_points}
    By \Cref{prop: paving_construction}, we immediately get that for $i=1,2$,
    \[
       p_{\mM_i}=(p_{\mM_i,X})_{X\in\mL(E)}=
       \begin{cases}
        \;\hfil k-1\;&\text{if }X\in\mS_i,\\
        \;\hfil \dim(X)\;&\text{if }\dim(X)\leq k\; \textnormal{and }X\not\in\mS_i,\\
        \;\hfil k\;&\text{otherwise}. 
        \end{cases} 
    \]
    Therefore the point $p_\mM$ corresponding to the convex combination of $\mM_1$ and $\mM_2$ is given by
    \[
        p_{\mM}=(p_{\mM,X})_{X\in\mL(E)}=
       \begin{cases}
        \;\hfil \dim(X)\;&\text{if }\dim(X)\leq k\; \textnormal{and } X\not\in\mS_1\cup\mS_2,\\
        \;\hfil \frac{a}{b}(k-1)+\left(1-\frac{a}{b}\right)k\;&\text{if } X\in\mS_1,\\
        \;\hfil \frac{a}{b} k+\left(1-\frac{a}{b}\right)(k-1)\;&\text{if } X\in\mS_2,\\
        \;\hfil k\;&\text{if } \dim(X)\geq k+1. 
        \end{cases}
    \]
\end{remark}

\begin{proof}[Proof of \Cref{thm: mu_paving_conv_comb}]
    Let $\mI_1,\mI_2$ be the collections of independent spaces of $\mM_1$ and $\mM_2$ respectively and set $\mI_b(\mM)$ to be the collection of $b$-independent spaces of $\mM$. By \Cref{rem: shape_paving_points} we know that every space $X\not\in\mS_i$ of dimension $\dim(X)\leq k$ is independent in $\mM_i$, for $i=1,2$. Thus we get that $\mI_1\cup\mI_2=\mL(E)_{\leq k}$, since $\mS_1\cap\mS_2=\emptyset$. Furthermore, all spaces $X$ having dimension $\dim(X)\leq k$, are $b$-independent in $\mM$, by \Cref{prop: mu_indeps_conv_comb}. Now, the spaces $X\in\mL(E)_s$ with $s\in\{k+1,\ldots,n\}$ have the same rank value $k$,  from \Cref{rem: shape_paving_points}. In particular, for such $s\in\{k+1,\ldots,n\}$,  either every $X\in\mL(E)_s$ satisfies
    \begin{equation}\label{eq:mu-circuit-paving}
        p_{\mM,X}\geq\frac{\dim(X)}{b},
    \end{equation}
    or none of them does. In the first case, every subspace $Y$ such that $\dim(Y)\in\{k+1,\ldots,s-1\}$ would also satisfy inequality \eqref{eq:mu-circuit-paving} and, therefore, we would get
    \[ \bigcup_{t=k+1}^s\mL(E)_t\subseteq\mI_b(\mM).
    \]
    In the second case, we would declare all of those spaces $X$ to be $b$-dependent. All together this implies that the $b$-circuits are given by the collection of all $s_0$-dimensional spaces such that $s_0\in\{k+1,\ldots,n\}$ is the smallest dimension for which the inequality \eqref{eq:mu-circuit-paving} is violated the first time. Moreover, 
    $$\mI_b(\mM)=\bigcup_{t=0}^{s_0-1}\mL(E)_t.         
    $$
    Thus, the maximal dimension of any $b$-independent space is $s_0-1<s_0$ and so $\mM$ is $b$-paving according to \Cref{def: mu_paving}.
\end{proof}

As a byproduct, we obtain the collections of the $b$-independent spaces and $b$-circuits of $\mM$.

\begin{corollary}
    Let $s_0\in\{k+1,\ldots,n\}$ be the smallest integer such that there exists a space $X\in\mL(E)_{s_0}$ with $p_{\mM,X}<\dim(X)/b$. Then the following holds:
    \[
        \mI_b(\mM)=\mL(E)_{\leq s_0-1} \quad \text{ and } \quad \C_b(\mM)=\mL(E)_{s_0}. 
    \]
\end{corollary}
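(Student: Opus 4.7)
The corollary is essentially a repackaging of the mechanism established inside the proof of \Cref{thm: mu_paving_conv_comb}, so my plan is to extract and sharpen what is already there. Two facts underlie the whole argument, both coming from \Cref{rem: shape_paving_points}: first, for every $X$ with $\dim(X)\leq k$ we have $p_{\mM,X}=\dim(X)$ (since $\mS_1\cap\mS_2=\emptyset$ forces each such $X$ to be independent in at least one of $\mM_1,\mM_2$, and then \Cref{prop: mu_indeps_conv_comb} finishes it); second, for each $s\in\{k+1,\ldots,n\}$ the value $p_{\mM,X}=k$ is constant on $\mL(E)_s$. Consequently the inequality $p_{\mM,X}\geq \dim(X)/\mu$ is uniform across $\mL(E)_s$, i.e.\ either every space of dimension $s$ satisfies it or none does. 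This justifies the minimality statement defining $s_0$ and is the only non-cosmetic step.

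Next, I would check the two identities separately. For $\mI_\mu(\mM)=\mL(E)_{\leq s_0-1}$: take any $X$ with $\dim(X)\leq s_0-1$ and any $J\leq X$. If $\dim(J)\leq k$ then $p_{\mM,J}=\dim(J)\geq \dim(J)/\mu$. If $k+1\leq \dim(J)\leq s_0-1$, the minimality of $s_0$ forces $p_{\mM,J}\geq \dim(J)/\mu$. So $X\in\mI_\mu(\mM)$ by the definition of $\mu$-independence. Conversely, if $\dim(X)\geq s_0$, then $X$ itself contains an $s_0$-dimensional subspace $Y$ with $p_{\mM,Y}<\dim(Y)/\mu$, which witnesses the $\mu$-dependence of $X$.

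For $\C_\mu(\mM)=\mL(E)_{s_0}$: every $X\in\mL(E)_{s_0}$ violates $p_{\mM,X}\geq \dim(X)/\mu$ at itself (so it is $\mu$-dependent), while all of its proper subspaces have dimension $\leq s_0-1$ and hence lie in $\mI_\mu(\mM)$ by the previous paragraph; therefore $X$ is a $\mu$-circuit. In the other direction, a $\mu$-circuit $C$ cannot have $\dim(C)\leq s_0-1$ (it would be $\mu$-independent) nor $\dim(C)>s_0$ (it would contain an $s_0$-dimensional proper subspace that is $\mu$-dependent, contradicting the circuit property), so $\dim(C)=s_0$.

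I do not anticipate a serious obstacle: the only step that is not bookkeeping is the observation that $p_{\mM,\cdot}$ is constant on each $\mL(E)_s$ for $s\geq k+1$, but this is immediate from \Cref{rem: shape_paving_points} since those spaces lie neither in $\mS_1$ nor $\mS_2$ and carry the flat value $k$.
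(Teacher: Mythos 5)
Your argument is correct and is essentially the paper's own: the corollary is obtained as a byproduct of the proof of \Cref{thm: mu_paving_conv_comb}, whose two pillars are exactly the ones you isolate --- $\mu$-independence of everything in $\mL(E)_{\leq k}$ via \Cref{prop: mu_indeps_conv_comb}, and constancy of $p_{\mM,\cdot}$ (equal to $k$) on each layer $\mL(E)_s$ for $s\geq k+1$, which makes the test $p_{\mM,X}\geq\dim(X)/\mu$ uniform on each such layer and justifies both identities by the bookkeeping you describe. One small slip: it is not true that $p_{\mM,X}=\dim(X)$ for every $X$ with $\dim(X)\leq k$; for $X\in\mS_1\cup\mS_2$ one has $p_{\mM,X}=k-\lambda$ or $k-(1-\lambda)$, strictly less than $k=\dim(X)$. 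This does not damage the proof, since all you actually need there is $p_{\mM,J}\geq\dim(J)/\mu$ for every $J$ of dimension at most $k$, and that is exactly what your parenthetical appeal to \Cref{prop: mu_indeps_conv_comb} (every such $J$ is independent in $\mM_1$ or in $\mM_2$ because $\mS_1\cap\mS_2=\emptyset$) delivers.
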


\begin{remark}
    \Cref{thm: mu_paving_conv_comb} still holds if we replace the assumption $\mS_1\cap\mS_2=\emptyset$ with the weaker one $\mS_1\not=\mS_2$. The reason is that the additional possibility $p_{\mM,X}=k-1$ for $X\in\mS_1\cap\mS_2$ also yields that $X$ is $b$-independent. This is due to the fact that $b\geq 2$, while, at the same time, $\frac{k}{k-1}<2$ and consequently we have $k-1\geq k/b$. Therefore, all spaces of dimension less than or equal to~$k$ are $b$-independent in $\mM$ and we arrive again at the starting point of the proof of \Cref{thm: mu_paving_conv_comb}.
\end{remark}

For the remainder of this subsection we focus on the characterization of the flats, cyclic spaces and cyclic flats of a convex combination $\mM$ arising from two paving $q$-matroids $\mM_1$ and $\mM_2$. We start by describing these collections for general $q$-matroids coming from the construction in \Cref{prop: paving_construction}.

\begin{lemma}\label{lem: flats_cyclic_paving_q_mats}
    Let $1\leq k\leq n-1$ and $\mS\subseteq \mL(E)_{k}$ with the property described in \Cref{prop: paving_construction}, and let $\mM_\mS$ be the paving $q$-matroid induced by $\mS$. Then the following holds:
    \begin{enumerate}
        \item The flats of $\mM_S$ are given by
        \[
            \mF(\mM_{\mS})=(\mL(E)_{\leq k-1}\setminus I)\cup \mS\cup\{\F_q^n\},
        \]
        where $I$ is given by
        \[
            I:=\{H\in\mL(E)_{k-1}\;\mid\; H \text{ is contained in some } S\in\mS\}.
        \]
        
        \item The cyclic spaces of $\mM_S$ are given by 
        \[
            \mO(\mM_{\mS})=(\mL(E)_{\geq k+1}\setminus J)\cup \mS\cup\{\langle 0\rangle\},
        \]
        where $J$ is given by
        \[
            J:=\{Y\in\mL(E)_{k+1}\;\mid\; Y \text{ contains some } S\in\mS\}.
        \]
        
        \item The cyclic flats are given by
        \[
            \mZ(\mM_{\mS})= \mS\cup\{E\}\cup\{\langle 0\rangle\}.
        \]
    \end{enumerate}
\end{lemma}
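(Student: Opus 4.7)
The plan is a direct, dimension-by-dimension case analysis, made feasible by the explicit form of $\rho_\mS$ in \Cref{prop: paving_construction}: it takes only three kinds of values, namely $\dim(V)$ for $\dim(V)\leq k$ with $V\notin\mS$, the value $k-1$ on elements of $\mS$, and the constant $k$ on every subspace of dimension at least $k+1$. So, once the case distinctions are set up, each verification is a short computation.

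For part (1), I would fix $X\in\mL(E)$ and test the defining condition $\rho_\mS(X)<\rho_\mS(X+v)$ for every one-dimensional $v\not\leq X$. If $\dim(X)\leq k-2$, then both $X$ and every one-step extension $X+v$ have rank equal to their dimension (since $\mS\subseteq\mL(E)_k$), so $X$ is a flat. If $\dim(X)=k-1$, then $X+v$ has dimension $k$ and rank $k-1$ or $k$ depending on whether $X+v\in\mS$; hence $X$ is a flat precisely when no $S\in\mS$ contains $X$, i.e.\ when $X\notin I$. If $\dim(X)=k$, every extension has dimension $k+1$ and rank $k$, so $X$ is a flat iff $\rho_\mS(X)=k-1$, forcing $X\in\mS$. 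Finally, for $\dim(X)\geq k+1$ both $\rho_\mS(X)$ and $\rho_\mS(X+v)$ equal $k$, ruling out every space except $X=E$, which is automatically a flat. Collecting these cases gives $(\mL(E)_{\leq k-1}\setminus I)\cup\mS\cup\{E\}$.

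For part (2), I rely on the observation recorded just after \Cref{def:cyc_spaces} that for a $q$-matroid clause (2) is superfluous, so $X$ is cyclic iff $\rho_\mS(H)=\rho_\mS(X)$ for every $H\in\mH(X)$. The zero subspace is cyclic vacuously. A space with $1\leq\dim(X)\leq k-1$ always admits a hyperplane of rank $\dim(X)-1<\rho_\mS(X)$, hence is not cyclic. For $\dim(X)=k$, every hyperplane has rank $k-1$, which matches $\rho_\mS(X)$ precisely when $X\in\mS$. For $\dim(X)=k+1$, the hyperplanes have dimension $k$ and rank either $k-1$ (if the hyperplane lies in $\mS$) or $k$; hence $X$ is cyclic iff no $S\in\mS$ is contained in $X$, i.e.\ $X\notin J$. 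For $\dim(X)\geq k+2$ every hyperplane still has dimension $\geq k+1$ and therefore rank $k$, so $X$ is automatically cyclic. Assembling these cases yields $(\mL(E)_{\geq k+1}\setminus J)\cup\mS\cup\{\langle 0\rangle\}$.

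Part (3) then follows by intersecting the descriptions in (1) and (2). The two ``bulk'' pieces $\mL(E)_{\leq k-1}\setminus I$ and $\mL(E)_{\geq k+1}\setminus J$ live in disjoint dimension strata, so only $\mS$ contributes from the middle, and the only genuine bookkeeping is tracking $\langle 0\rangle$ and $E$. I expect the main annoyance to be handling the boundary cases $k=1$ (where loops could in principle appear, though the pairwise-intersection hypothesis forces $|\mS|\leq 1$) and $n=k+1$ (where $E$ itself has dimension $k+1$ and can lie in $J$); both need to be reconciled against the stated formula. Once these are dispatched, the intersection collapses to $\mS\cup\{E\}\cup\{\langle 0\rangle\}$, as claimed.
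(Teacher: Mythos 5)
Your proposal is correct and follows essentially the same route as the paper: a stratified case analysis by $\dim(X)$ using the explicit three-valued form of $\rho_\mS$, checking the flat condition on one-step extensions for (1), reducing cyclicity to $\rho_\mS(H)=\rho_\mS(X)$ on hyperplanes for (2), and intersecting for (3). Your remark about the boundary cases $k=1$ with $\mS\neq\emptyset$ (where $\langle 0\rangle$ fails to be a flat) and $n=k+1$ (where $E$ can fail to be cyclic) is a fair observation about the stated formula in (3) that the paper's own proof does not address.
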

\begin{proof}
    (1) By \Cref{rem: shape_paving_points}, we have that each space $X\in\mL(E)_{\geq k+1}$  has the same rank $k$, thus it cannot be a flat of $\mM_{\mS}$, except for the whole space $E$. Next, we have that all $k$-dimensional spaces are either bases of $\mM_{\mS}$ or members of $\mS$. While bases can never be flats, all the members of $\mS$ are flats, since their rank increases from $k-1$ to $k$ whenever we add any  $1$-dimensional space not already contained in them. Let $X\in\mL(E)_{k-1}$. Then, $X$ is independent. 
    If $X$ is not contained in any element of $\mS$, then it is clearly a flat of $\mM_{\mS}$. If $X$ is an hyperplane of some element of $\mS$, we observe that there exists a $1$-dimensional space that, added to $X$, gives a space of the same rank as $X$. Hence, $X$ cannot be a flat. Finally, all the spaces $X\in\mL(E)_{\leq k-2}$ are independent and so are all the $(\dim(X)+1)$-dimensional spaces containing them, thus they are flats of $\mM_{\mS}$. 

    (2) Cyclic spaces cannot be independent, except from the zero-space. So, we only need to consider dependent spaces. Clearly, circuits are cyclic, thus all elements of $\mS$ are cyclic in $\mM_{\mS}$. Let $X\in\mL(E)_{k+1}$. We distinguish two cases. If $X$ does not contain any element from $\mS$, then it is cyclic, by \Cref{rem: shape_paving_points}. Assume that $X$ contains an element from $S\in\mS$. Then $S$ has rank strictly smaller than the rank of $X$, by \Cref{rem: shape_paving_points}, and therefore $X$ cannot be cyclic in $\mM_{\mS}$. Finally, all spaces $X\in\mL(E)_{\geq k+2}$ have rank $k$ and so do all their hyperplanes, which implies that they are cyclic in $\mM_{\mS}$. All together this proves the second statement.

    (3) This statement follows immediately by intersecting the sets $\mF(\mM_{\mS})$ and $\mO(\mM_{\mS})$.
\end{proof}

With the aid of \Cref{lem: flats_cyclic_paving_q_mats} we can characterize the flats, cyclic spaces and cyclic flats of the convex combination of two paving $q$-matroids. Therefore let $\mM_1,\mM_2$ and $\mM$ be as before.

\begin{theorem}\label{thm: flats_paving_conv_comb}
   The collection of flats of $\mM$ is given by
    \[
        \mF(\mM)=\mS_1\cup\mS_2\cup\{E\}\cup\mL(E)_{\leq k-1}.
    \]   
\end{theorem}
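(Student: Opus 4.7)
The plan is to combine the general characterization of flats given by Proposition \ref{prop:flats_conv_comb} with the explicit description of the rank functions $\rho_1, \rho_2$ from Remark \ref{rem: shape_paving_points}. The inclusion ``$\supseteq$'' will follow almost immediately from Corollary \ref{coro:flats_conv_comb} and Lemma \ref{lem: flats_cyclic_paving_q_mats}, with one subtle case requiring Proposition \ref{prop:flats_conv_comb} directly; the inclusion ``$\subseteq$'' will be a short direct check using the formula in Remark \ref{rem: shape_paving_points}.

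First I would observe that $E \in \mF_{\mM}$ is trivial, and that $\mS_1 \cup \mS_2 \subseteq \mF_1 \cup \mF_2 \subseteq \mF_{\mM}$ by Lemma \ref{lem: flats_cyclic_paving_q_mats}(1) and Corollary \ref{coro:flats_conv_comb}. For spaces $X \in \mL(E)_{\leq k-1}$, I would distinguish two cases. If $X$ is not an element of $I_1 \cap I_2$ (where $I_i$ denotes the $(k-1)$-spaces contained in some element of $\mS_i$), then $X \in \mF_1 \cup \mF_2$ by Lemma \ref{lem: flats_cyclic_paving_q_mats}(1), so Corollary \ref{coro:flats_conv_comb} gives $X \in \mF_{\mM}$. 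The case $\dim X < k-1$ is also handled here since $\dim(X+v) \leq k-1 < k$, so it lies in neither $\mS_i$.

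The main obstacle is the remaining case: $\dim X = k-1$ and $X$ lies inside elements of both $\mS_1$ and $\mS_2$, so $X$ is a flat of neither $\mM_1$ nor $\mM_2$ individually. Here I would apply Proposition \ref{prop:flats_conv_comb} directly. A one-dimensional $v \not\leq X$ belongs to $\Cl_{\rho_1}(X) \cap \Cl_{\rho_2}(X)$ iff $\rho_i(X+v) = \rho_i(X) = k-1$ for $i=1,2$; since $X+v$ has dimension $k$, Remark \ref{rem: shape_paving_points} forces $X+v \in \mS_1 \cap \mS_2 = \emptyset$. No such $v$ exists, so $\Cl_{\rho_1}(X) \cap \Cl_{\rho_2}(X) = \mL(X)_1$ and $X \in \mF_{\mM}$.

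For the reverse inclusion, let $X \notin \mS_1 \cup \mS_2 \cup \{E\} \cup \mL(E)_{\leq k-1}$. Then either $\dim X = k$ with $X \notin \mS_1 \cup \mS_2$, in which case Remark \ref{rem: shape_paving_points} gives $\rho_i(X) = k$ and $\rho_i(X+v) = k$ for every one-dimensional $v$ (since $\dim(X+v) = k+1 > k$), or $\dim X \geq k+1$ with $X \neq E$, in which case the same reasoning applies with $\rho_i(X) = \rho_i(X+v) = k$. Either way $\Cl_{\rho_1}(X) \cap \Cl_{\rho_2}(X) \supsetneq \mL(X)_1$, so Proposition \ref{prop:flats_conv_comb} shows $X \notin \mF_{\mM}$. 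Combining both directions yields the claimed equality.
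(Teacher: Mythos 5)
Your proposal is correct and follows essentially the same route as the paper's proof: the inclusion $\mF_1\cup\mF_2\subseteq\mF_{\mM}$ via Corollary \ref{coro:flats_conv_comb} combined with Lemma \ref{lem: flats_cyclic_paving_q_mats}(1), followed by the same three-case check (bases of dimension $k$, proper spaces of dimension $\geq k+1$, and the spaces in $I_1\cap I_2$) using Proposition \ref{prop:flats_conv_comb}. Your handling of the $I_1\cap I_2$ case, forcing $X+v\in\mS_1\cap\mS_2=\emptyset$, is just a rephrasing of the paper's contradiction argument that $X_1=X_2$.
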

\begin{proof}
 Let $\mF_i$ denote the collection of flats of $\mM_i$, for $i=1,2$. For any $X\in\mL(E)$ let $\text{Cl}_1(X)$ and $\text{Cl}_2(X)$ the sets defined in \Cref{def-closure} corresponding to $\mM_1$ and $\mM_2$, respectively. By \Cref{coro:flats_conv_comb}, we already have $\mF_1\cup\mF_2\subseteq\mF_{\mM}$. Using \Cref{lem: flats_cyclic_paving_q_mats} (1) for $\mF_1$ and $\mF_2$ individually, one computes the following:
    \begin{align*}
        \mF_1\cup\mF_2 &= \mS_1\cup\mS_2\cup\{E\}\cup(\mL(E)_{\leq k-1}\setminus I_1)\cup(\mL(E)_{\leq k-1}\setminus I_2)\\
        &= \mS_1\cup\mS_2\cup\{E\}\cup(\mL(E)_{\leq k-1}\setminus (I_1\cap I_2)),
    \end{align*}
    where $I_1$ and $I_2$ denote the corresponding sets from \Cref{lem: flats_cyclic_paving_q_mats}(1). For all the other spaces $F\in\mL(E)\setminus(\mF_1\cup\mF_2)$ we still need to check if they are flats of $\mM$. Due to \Cref{prop:flats_conv_comb}, this just means to check if $\text{Cl}_1(F)\cap\text{Cl}_2(F)=\mL(F)_1$ holds. There exist three cases to consider.
    \begin{itemize}
        \item $F\in\mL(E)_k\setminus(\mS_1\cup\mS_2)$: This implies that $F$ is basis of $\mM_1$ and $\mM_2$ and, consequently, we get $\text{Cl}_1(F)=\text{Cl}_2(F)=\mL(E)_1$. Thus, $F$ is not a flat of $\mM$.

        \item $F\in\mL(E)_{\geq k+1}\setminus\{\F_q^n\}$: In this case we know from \Cref{rem: shape_paving_points} that $p_{\mM_1,F}=p_{\mM_2,F}=k$ and  this is also the rank of all the spaces containing $F$. Hence, $\text{Cl}_1(F)=\text{Cl}_2(F)=\mL(E)_1$ and $F$ is not a flat of $\mM$.

        \item $F\in I_1\cap I_2$: This means that there exist spaces $X_1\in\mS_1$ and $X_2\in\mS_2$ such that $F\leq X_1\cap X_2$. Moreover $X_1\not=X_2$ since $\mS_1\cap\mS_2=\emptyset$ by assumption. Now, if there exists a $1$-dimensional space $x\in\mL(E)_1\setminus\mL(F)_1$ such that $F+x=X_1$ and $F+x=X_2$, we get that $X_1=X_2$, which contradicts the assumption. Therefore, none of the $1$-dimensional spaces $x\in\mL(E)_1\setminus\mL(F)_1$ can be in $\text{Cl}_1(F)\cap\text{Cl}_2(F)$, implying that $\text{Cl}_1(F)\cap\text{Cl}_2(F)=\mL(F)_1$ and hence $F$ is a flat of $\mM$.
    \end{itemize}
    By combining these three cases with the above computation of $\mF_1\cup\mF_2$, we obtain the desired result. 
\end{proof}

\begin{theorem}\label{thm: cyclic_paving_conv_comb}
    The collection of cyclic spaces of $\mM$ is given by
    \[
        \mO(\mM)=\mS_1\cup\mS_2\cup\{\langle 0\rangle\}\cup\mL(E)_{\geq k+1}.
    \] 
\end{theorem}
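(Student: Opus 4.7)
The proof follows the same template as that of \Cref{thm: flats_paving_conv_comb}, with \Cref{prop:cyc_spaces_noloops} in place of \Cref{prop:flats_conv_comb}. The argument splits into first determining $\mO_1\cup\mO_2$, applying \Cref{coro:cyc_spaces_noloops} to conclude $\mO_1\cup\mO_2\subseteq\mO_\mM$, and then testing the remaining spaces with the criterion $\mN_1(X)\cap\mN_2(X)=\emptyset$.

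Applying \Cref{lem: flats_cyclic_paving_q_mats}(2) to each $\mM_i$ separately and writing $J_i$ for the set of $(k+1)$-dimensional subspaces of $E$ containing some element of $\mS_i$, one gets
$$\mO_1\cup\mO_2=\mS_1\cup\mS_2\cup\{\langle 0\rangle\}\cup\bigl(\mL(E)_{\geq k+1}\setminus(J_1\cap J_2)\bigr).$$
Assuming $k\geq 2$ so that both $\mM_i$ are loopless (the edge case $k=1$ can be discussed separately), \Cref{coro:cyc_spaces_noloops} yields $\mO_1\cup\mO_2\subseteq\mO_\mM$.

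It then remains to classify the cyclicity in $\mM$ of the spaces in $\mL(E)\setminus(\mO_1\cup\mO_2)$, using the shape of the $\mM_i$-ranks from \Cref{rem: shape_paving_points} combined with \Cref{prop:cyc_spaces_noloops}. For $X\in\mL(E)_{\leq k-1}\setminus\{\langle 0\rangle\}$, every hyperplane $H$ of $X$ satisfies $\rho_i(H)=\dim(H)<\rho_i(X)$ for $i=1,2$, so $\mN_1(X)\cap\mN_2(X)=\mH(X)\neq\emptyset$ and $X\notin\mO_\mM$; the same reasoning applies to $X\in\mL(E)_k\setminus(\mS_1\cup\mS_2)$, since its hyperplanes are $(k-1)$-dimensional and independent in both $\mM_1$ and $\mM_2$.

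The main subtlety is the case $X\in J_1\cap J_2$, i.e. $(k+1)$-dimensional subspaces containing an element of each of $\mS_1,\mS_2$: although these are excluded from $\mO_1\cup\mO_2$, they must still be cyclic in $\mM$ for the claimed equality to hold. For such $X$ one has $\rho_i(X)=k$ and, for any $H\in\mH(X)$, the inequality $\rho_i(H)<k$ holds precisely when $H\in\mS_i$. Hence $\mN_i(X)=\mH(X)\cap\mS_i$ and the disjointness assumption $\mS_1\cap\mS_2=\emptyset$ forces $\mN_1(X)\cap\mN_2(X)=\emptyset$, so $X\in\mO_\mM$. Collecting the three cases with the description of $\mO_1\cup\mO_2$ produces exactly the asserted expression for $\mO_\mM$.
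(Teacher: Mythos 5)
Your proof is correct and follows essentially the same route as the paper's: compute $\mO_1\cup\mO_2$ from \Cref{lem: flats_cyclic_paving_q_mats}(2), invoke \Cref{coro:cyc_spaces_noloops} for one inclusion, and settle the remaining three cases via the criterion $\mN_1(X)\cap\mN_2(X)=\emptyset$ of \Cref{prop:cyc_spaces_noloops}, with the $J_1\cap J_2$ case handled by $\mN_i(X)=\mH(X)\cap\mS_i$ and disjointness of $\mS_1,\mS_2$. Your explicit caveat that looplessness requires $k\geq 2$ is in fact slightly more careful than the paper, which simply asserts that pavingness rules out loops.
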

\begin{proof}
Let $\mO_i$ denote the collection of cyclic spaces of $\mM_i$, for $i=1,2$.  Notice that $\mM_1$ and $\mM_2$ do not have loops, since they are paving. Therefore, we can apply \Cref{coro:cyc_spaces_noloops}, which yields $\mO_1\cup\mO_2\subseteq\mO(\mM)$. Then, using \Cref{lem: flats_cyclic_paving_q_mats} (2) for $\mF_1$ and $\mF_2$ individually, one computes the following:
    \begin{align*}
        \mO_1\cup\mO_2 &= \mS_1\cup\mS_2\cup\{\langle 0\rangle\}\cup(\mL(E)_{\geq k+1}\setminus J_1)\cup(\mL(E)_{\geq k+1}\setminus J_2)\\
        &= \mS_1\cup\mS_2\cup\{\langle 0\rangle\}\cup(\mL(E)_{\geq k+1}\setminus (J_1\cap J_2)),
    \end{align*}
    where $J_1$ and $J_2$ denote the corresponding sets from \Cref{lem: flats_cyclic_paving_q_mats} (2). For all the other spaces $V\in\mL(E)\setminus(\mO_1\cup\mO_2)$ we still need to check if they are cyclic spaces of $\mM$. Due to \Cref{prop:cyc_spaces_noloops}, this just means to check if $\mN_1(V)\cap\mN_2(V)=\emptyset$ holds, where $\mN_1$ and $\mN_2$ are the sets defined in \Cref{subsec: cyclic}. We need to distinguish between three cases.
    \begin{itemize}
        \item $V\in\mL(E)_k\setminus(\mS_1\cup\mS_2)$: This implies that $V$ is basis of both $\mM_1$ and $\mM_2$ and consequently we get $\mN_1(V)=\mN_2(V)=\mH(V)$. Thus $V$ is not a cyclic space of $\mM$.

        \item $V\in\mL(E)_{\leq k-1}\setminus\{\langle 0\rangle\}$: In this case we know that $V$ is independent in $\mM_1$ and $\mM_2$, hence we have again $\mN_1(V)=\mN_2(V)=\mH(V)$ and $V$ is not a cyclic space of $\mM$.

        \item $V\in J_1\cap J_2$: This means that there are some hyperplanes of $V$ contained in $\mS_1$ and some contained in $\mS_2$. Moreover, by  definition of $\mN_i$ we can observe that $\mN_i(V)\subseteq\mS_i$, for $i=1,2$. Thus, there is no hyperplane of $V$ contained in $\mN_1(V)\cap\mN_2(V)$, since $\mS_1\cap\mS_2=\emptyset$. In other words, we have $\mN_1(V)\cap\mN_2(V)=\emptyset$ and $V$ is a cyclic space of $\mM$. 
    \end{itemize}
    By combining these three cases with the above computation of $\mO_1\cup\mO_2$, we obtain the desired result.
\end{proof}

As a direct consequence of the above theorems we get the following description of the cyclic flats of $\mM$.

\begin{corollary}\label{coro: cyclic_flats_paving_conv_comb}
   The collection of cyclic flats of $\mM$ is given by
    \[
        \mZ(\mM)=\mS_1\cup\mS_2\cup\{\langle 0\rangle\}\cup\{E\}.
    \]
\end{corollary}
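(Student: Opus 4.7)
The plan is to obtain the cyclic flats by direct intersection of the two collections already computed in \Cref{thm: flats_paving_conv_comb} and \Cref{thm: cyclic_paving_conv_comb}, since by definition $\mZ_{\mM}=\mF_{\mM}\cap\mO_{\mM}$. There is essentially no further structural argument needed: the only input beyond these two theorems is the trivial dimension comparison that separates the various pieces.

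More concretely, I would first recall that
\[
    \mF_{\mM}=\mS_1\cup\mS_2\cup\{E\}\cup\mL(E)_{\leq k-1},\qquad
    \mO_{\mM}=\mS_1\cup\mS_2\cup\{\langle 0\rangle\}\cup\mL(E)_{\geq k+1},
\]
and then compute the intersection piece by piece using that $\mS_1,\mS_2\subseteq\mL(E)_k$. Since $\mL(E)_{\leq k-1}\cap\mL(E)_{\geq k+1}=\emptyset$ and the sets $\mS_i$ contain only $k$-dimensional spaces, the only surviving terms are: $\mS_1\cup\mS_2$ (which appears literally in both collections), the space $\langle 0\rangle$ (lying in $\mL(E)_{\leq k-1}$ on the flats side and explicitly in $\mO_{\mM}$), and the space $E$ (lying explicitly in $\mF_{\mM}$ and in $\mL(E)_{\geq k+1}$ on the cyclic side, using $n\geq k+1$). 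All cross-terms between $\mS_i$ and the dimension-restricted parts vanish since the dimensions do not match.

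There is no genuine obstacle here: the work has already been done in the two preceding theorems, and the corollary is a bookkeeping exercise. The only point to state carefully is the disjointness of $\mL(E)_{\leq k-1}$ and $\mL(E)_{\geq k+1}$ together with the observation that elements of $\mS_1\cup\mS_2$ have dimension exactly $k$, so they cannot overlap accidentally with the dimension-restricted parts of either collection.
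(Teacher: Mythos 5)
Your proposal is correct and matches the paper's own proof, which likewise obtains the cyclic flats simply by intersecting the collections $\mF_{\mM}$ and $\mO_{\mM}$ from \Cref{thm: flats_paving_conv_comb} and \Cref{thm: cyclic_paving_conv_comb}. Your extra bookkeeping about dimensions is a harmless elaboration of the same one-line argument.
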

\begin{proof}
The result follows by intersecting the sets $\mF(\mM)$ and $\mO(\mM)$ obtained from Theorems~\ref{thm: flats_paving_conv_comb} and~\ref{thm: cyclic_paving_conv_comb}.
\end{proof}

Note that by weakening the assumption $\mS_1\cap\mS_2=\emptyset$ in Theorems~\ref{thm: flats_paving_conv_comb} and~\ref{thm: cyclic_paving_conv_comb} and \Cref{coro: cyclic_flats_paving_conv_comb} to the condition $\mS_1\ne\mS_2$, the computation of these collections becomes more complicated and the results will not have such a compact form anymore.

We end this subsection with one final observation.

\begin{remark}
    The reader may have noticed that our setting only considers two paving $q$-matroids coming from the construction in \Cref{prop: paving_construction}, which in addition have the same rank. One can also do it for two paving $q$-matroids of arbitrary rank, still coming from construction in \Cref{prop: paving_construction}. However, all the computations become significantly more difficult, as the shape of the points $p_{\mM_1}, p_{\mM_2}$ and $p_{\mM}$ include several new cases to consider. For this reason, we decided to not discuss this situation in general. However, for uniform $q$-matroids we obtain interesting results, see \Cref{sec: uniform_conv_comb}.
\end{remark}


\section{The convex combination of uniform \emph{q}-matroids}\label{sec: uniform_conv_comb}


In this section, we first describe the convex combination with rational coefficients of two uniform $q$-matroids of any rank, by investigating its $\mu$-independent spaces. Then, we turn to the convex combination of $n-2$ uniform $q$-matroids with rank strictly increasing. Finally, we characterize the cyclic flats of such convex combinations.

Recall that if $\mM$ is the uniform $q$-matroid $\mU_{k,n}(q)$, then its corresponding point $p_\mM\in\RP$ is 
\begin{equation}\label{eq:uniform_point}
    p_\mM=(0,1,\ldots,1,2,\ldots,2,\ldots,k,\ldots,k,\dots,k),
\end{equation}
where the first coordinate is $0$, the successive $\qqbin{n}{1}_q$ coordinates are equal to $1$, and so on, till the last $\sum\limits_{i=k}^n\qqbin{n}{i}_q$ positions are equal to $k$.

\subsection{Convex combination of two uniform \emph{q}-matroids}


In this subsection, we consider two uniform $q$-matroids. Let $1< k_1<k_2<n$ and
$\mM_1=\mU_{k_1,n}(q)$, $\mM_2=\mU_{k_2,n}(q)$. Then, $p_{\mM_1}$ and $p_{\mM_2}$ are of the form described in Eq. \eqref{eq:uniform_point}.
Let $\lambda=\frac{a}{\mu}\in\mathbb{Q}$, with $a\geq 1$ and $\mu\geq 2$. Consider the convex combination $\mM=(1-\lambda)\mM_1+\lambda\mM_2$ and its corresponding point $p_\mM\in\RP$. We have already seen that $\mu$ is a denominator for $\mM$. Hence, we can study the $\mu$-independence.

\begin{theorem}\label{thm:mu-indep_uniform}
    Let $a,\mu$ be integers, with $a\geq 1, \; \mu\geq 2$ and $\lambda=\frac{a}{\mu}\in\mathbb{Q}$ and $\mM=(1-\lambda)\mM_1+\lambda\mM_2$. If $\mu\geq \left\lceil \frac{n}{k_1}\right\rceil$, then $\mI_\mu(\mM) = \mL(E)$.
\end{theorem}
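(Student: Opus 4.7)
The plan is to verify directly that for every $J \in \mL(E)$ the $\mu$-independence inequality $\rho(J) \geq \dim(J)/\mu$ holds, where $\rho$ is the rank function of $\mM$. Since $I \in \mL(E)$ is $\mu$-independent precisely when this inequality is satisfied by every subspace of $I$, verifying it on all of $\mL(E)$ is equivalent to the claim $\mI_\mu(\mM) = \mL(E)$.

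Writing $\lambda = a/\mu$ and using the explicit rank formula for uniform $q$-matroids, the rank function of $\mM$ is
\[
\rho(J) \;=\; (1-\lambda)\min\{k_1,\dim J\} + \lambda\min\{k_2,\dim J\},
\]
and I would split the verification into three cases according to the value of $d := \dim(J)$ relative to $k_1$ and $k_2$. In Case~1, when $d \leq k_1$, both minima equal $d$, so $\rho(J) = d \geq d/\mu$ trivially (this uses only $\mu \geq 1$, not the hypothesis on $\mu$). In Case~2, when $k_1 < d \leq k_2$, we have $\rho(J) = (1-\lambda)k_1 + \lambda d$, and the required inequality rearranges to $(1-\lambda)k_1 \geq d(1-a)/\mu$; since $a \geq 1$ the right-hand side is non-positive while the left-hand side is non-negative, so the inequality holds automatically. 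Again the hypothesis on $\mu$ is not needed here.

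The binding case is Case~3, when $d > k_2$. Here $\rho(J) = (1-\lambda)k_1 + \lambda k_2$, which is at least $k_1$ because $\lambda(k_2 - k_1) \geq 0$. By hypothesis $\mu \geq \lceil n/k_1 \rceil \geq n/k_1$, so $k_1 \geq n/\mu \geq d/\mu$, and the inequality follows. This is where the specific bound $\mu \geq \lceil n/k_1 \rceil$ enters, and it is sharp in the sense that the worst case is $d = n$, which dictates the lower bound on $\mu$.

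I do not expect any genuine obstacle; the argument is a short case analysis on $\dim J$, and the only subtlety is recognizing that Cases~1 and~2 hold for free (so the hypothesis is only needed for large-dimensional subspaces), which explains why the threshold involves the smaller rank $k_1$ rather than $k_2$.
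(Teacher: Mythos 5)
Your proposal is correct and follows essentially the same route as the paper: a direct case analysis on $\dim(J)$ using the explicit rank formula, with the hypothesis $\mu\geq\lceil n/k_1\rceil$ entering only for $\dim(J)>k_2$. Your Case~3 bound $\rho(J)\geq k_1\geq n/\mu$ is a slightly coarser but cleaner estimate than the paper's $\rho(J)\geq\frac{(\mu-1)k_1+k_2}{\mu}$, and both suffice; note only that the hypothesis is sufficient but not necessary (the paper's Example~\ref{ex:unif_sum} shows this), so your closing "sharpness" remark should be read as explaining the threshold of this particular estimate rather than of the theorem.
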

\begin{proof}
     Let $p_\mM=(p_X)_{X\in\mL(E)}$ be the point in $\RP$ corresponding to $\mM$. Then 
    $$p_X = \begin{cases}
        \dim(X) & \textnormal{ if }\dim(X)\leq k_1,\\
        k_1+\lambda(\dim(X)-k_1)   & \textnormal{ if } k_1<\dim(X)\leq k_2,\\
        (1-\lambda)k_1 + \lambda k_2 & \textnormal{ if } \dim(X)>k_2.
    \end{cases}$$
    From \Cref{prop: mu_indeps_conv_comb}, we have that the independent spaces in $\mM_1$ and $\mM_2$ are $\mu$-independent spaces in $\mM$. In particular, all the spaces $X\in\mL(E)_{\leq k_2}$ are $\mu$-independent. Let $X\in\mL(E)_{>k_2}$. Then $p_X = k_1+\lambda(k_2-k_1)$. We need to show that for every subspace $J\leq X$, we have that $p_J\geq \frac{\dim(J)}{\mu}$. This is clearly true for subspaces of dimension at most $k_2$. So, we consider $J\leq X$ with $\dim(J)> k_2$. Then 
    \begin{align*}
        p_J &= (1-\lambda)k_1 + \lambda k_2 = k_1 + \lambda(k_2-k_1) = k_1 + \frac{a}{\mu}(k_2-k_1) \\
        &\geq k_1 +\frac{k_2-k_1}{\mu} = \frac{(\mu-1)k_1+k_2}{\mu},
    \end{align*}
    where the inequality follows from the fact that $a\geq 1$. 
    Now, we have that 
    \begin{align}\label{eq:p1}
        \frac{(\mu-1)k_1+k_2}{\mu}\geq \frac{\dim(J)}{\mu} &\Leftrightarrow (\mu-1)k_1+k_2\geq \dim(J)\Leftrightarrow \frac{\dim(J)-k_2}{k_1} +1\leq \mu.
    \end{align}
    Observe that, since $k_1<k_2$, we get 
    \begin{align}\label{eq:p2}
        \frac{\dim(J)-k_2}{k_1} +1\leq \frac{\dim(J)-k_1}{k_1} +1 = \frac{\dim(J)}{k_1}\leq \left\lceil \frac{n}{k_1}\right\rceil \leq \mu,
    \end{align}
    where the last inequality follows by assumption. 
    Hence, by putting together Eq.~\eqref{eq:p1} and Eq.~\eqref{eq:p2}, we conclude that $X$ is $\mu$-independent. 
\end{proof}

The assumption $\mu\geq \left\lceil \frac{n}{k_1}\right\rceil$ in \Cref{thm:mu-indep_uniform} is quite strong, but there are some cases where we can show the same result for every choice of $\lambda$, as the following theorem illustrates. 

\begin{theorem}\label{thm: mu_indep_uniform2}
 Let $1< k_1<k_2<n$, with $k_1+k_2\geq n$ and
let $\mM_1=\mU_{k_1,n}(q)$, $\mM_2=\mU_{k_2,n}(q)$. Let $\lambda=\frac{a}{\mu}\in\mathbb{Q}$ and $\mM=(1-\lambda)\mM_1+\lambda\mM_2$. Then $\mI_\mu(\mM) = \mL(E)$.
\end{theorem}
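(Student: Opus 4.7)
The plan is to show directly that every $X \in \mL(E)$ satisfies $p_X \geq \dim(X)/\mu$, which by \Cref{def:indep_spaces} implies that every space of $\mL(E)$ is $\mu$-independent in $\mM$. Since the formula for $p_X$ is exactly the one derived in the proof of \Cref{thm:mu-indep_uniform}, namely
\[
p_X = \begin{cases}
\dim(X) & \textnormal{if } \dim(X)\leq k_1,\\
k_1 + \lambda(\dim(X) - k_1) & \textnormal{if } k_1 < \dim(X)\leq k_2,\\
(1-\lambda)k_1 + \lambda k_2 & \textnormal{if } \dim(X) > k_2,
\end{cases}
\]
the argument reduces to checking three inequalities, one for each dimensional regime.

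First, when $\dim(X) \leq k_1$, the inequality $\dim(X) \geq \dim(X)/\mu$ holds trivially because $\mu \geq 2$. For $k_1 < \dim(X) \leq k_2$, writing $\lambda = a/\mu$ with $a \geq 1$ and clearing denominators, the required inequality becomes $\mu k_1 + a(\dim(X) - k_1) \geq \dim(X)$; since $a \geq 1$ this is implied by $(\mu-1)k_1 + \dim(X) \geq \dim(X)$, which is immediate as $\mu \geq 2$ and $k_1 > 1$.

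The decisive case is $\dim(X) > k_2$. Here clearing denominators yields the requirement
\[
\mu k_1 + a(k_2 - k_1) \geq \dim(X).
\]
Using $a \geq 1$ we bound the left-hand side from below by $(\mu - 1)k_1 + k_2$, so it suffices to verify $(\mu-1)k_1 + k_2 \geq \dim(X)$. Since $\dim(X) \leq n$ and $\mu \geq 2$, we have $(\mu - 1)k_1 + k_2 \geq k_1 + k_2 \geq n$ by the standing assumption $k_1 + k_2 \geq n$. This is exactly where the hypothesis enters the argument and will be the one nontrivial step. Combining the three cases, every subspace of $E$ is $\mu$-independent in $\mM$, hence $\mI_\mu(\mM) = \mL(E)$.
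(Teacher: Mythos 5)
Your proposal is correct and follows essentially the same route as the paper: the decisive case $\dim(X)>k_2$ is handled by exactly the same chain $\mu k_1+a(k_2-k_1)\geq(\mu-1)k_1+k_2\geq k_1+k_2\geq n$, and the lower-dimensional cases are routine (the paper dispatches them via Proposition~\ref{prop: mu_indeps_conv_comb} rather than by direct computation, but this is an immaterial difference).
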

\begin{proof}
    Let $p_\mM=(p_X)_{X\in\mL(E)}$ be the point in $\RP$ corresponding to $\mM$. As in the proof of \Cref{thm:mu-indep_uniform}, it is enough to show that spaces of dimension larger than $k_2$ are $\mu$-independent. Let $X\in\mL(E)_{>k_2}$.  Then $p_X = k_1+\lambda(k_2-k_1)$. We need to show that for every subspace $J\leq X$, we have that $p_J\geq \frac{\dim(J)}{\mu}$. This is clearly true for subspaces of dimension at most $k_2$. So, we consider $J\leq X$ with $\dim(J)> k_2$. Then 
    \begin{align*}
        p_J &= \left(1-\frac{a}{\mu}\right)k_1 + \frac{a}{\mu} k_2 = \frac{\mu-a}{\mu}k_1 + \frac{a}{\mu}k_2.
    \end{align*} 
    Observe that since $a\geq 1$ and $\mu\geq 2$,
    \begin{align*}
        (\mu-a)k_1+ak_2 = \mu k_1 +a(k_2-k_1) \geq 2k_1 + k_2-k_1 = k_1+k_2 \geq n \geq \dim(J).
    \end{align*}
    By dividing by $\mu$ both sides of the inequality, we have that $X$ is $\mu$-independent.
\end{proof}

Note that while the assumptions of \Cref{thm:mu-indep_uniform} and \Cref{thm: mu_indep_uniform2} are sufficient to characterize the $\mu$-independent spaces of the convex combination of two uniform $q$-matroids, they are not necessary, as the next example shows.

\begin{example}\label{ex:unif_sum}
    Let $k_1=2, \; k_2=3, \; n=8$ and $\mM_1=\mU_{2,8}(q)$,   $\mM_2=\mU_{3,8}(q)$. Let $\lambda=\frac{2}{3}$. Then $\mu=3< 4=\frac{n}{k_1}$. Moreover, for every subspace $X$ of $\F_q^8$ with $\dim(X)\in\{4,5,6,7,8\}$, we have 
    $p_X = \frac{1}{3}\cdot 2 +\frac{2}{3}\cdot 3 = \frac{8}{3}\geq \frac{\dim(X)}{3}$, where $(p_X)_{X\in\mL(E)}$ is the point in $\RP$ corresponding to $\mM=\frac{1}{3}\mM_1 + \frac{2}{3}\mM_2$. Hence, all subspaces of $\F_q^n$ are $\mu$-independent in $\mM$, but $\mu<\frac{n}{k_1}$ and $n> k_1+k_2$.
\end{example}

\begin{remark}
    If $\mM_1=\mU_{k_1,n}(q)$ and $\mM_2=\mU_{n,n}(q)$, then it is immediate to see that for every $\lambda\in\mathbb{Q}$ and every $k_1<n$, we have $\mL(E)= \mI_\mu(\mM)$, where $\mM=(1-\lambda)\mM_1+\lambda\mM_2$.
\end{remark}

\subsection{Convex combination of \emph{n-2} uniform \emph{q}-matroids}
In this subsection, we extend previous results to the case of the convex combination of $n-2$ uniform $q$-matroids with rank values equal to $k_i=i+1$, for every $i\in\{1,\ldots,n-2\}$, with $n\geq 5$. Note that this cannot be done using associativity, since the convex combination of $q$-matroids is never a $q$-matroid.

For every $i\in\{1,\ldots,n-2\}$ let $\mM_i=\mU_{k_i,n}(q)$, $\lambda_i=\frac{a_i}{b_i}\in\mathbb{Q}$, with $0<\lambda_i<1$ and $\sum\limits_{i=1}^{n-2}\lambda_i=1$. Let $\mM = \sum\limits_{i=1}^{n-2}\lambda_i\mM_i$ and let $p_\mM=(p_X)_{X\in\mL(E)}$.

Observe that $\mI(\mM_i)\subseteq \mI(\mM_j)$ for every $1\leq i <j\leq n-2$. Moreover, it is not difficult to write down the explicit coordinates of $p_\mM$. Indeed, for $X\in\mL(E)$, we have that

$$p_X=\begin{cases}
    \dim(X) & \textnormal{ if } \dim(X)\leq 2, \\
    \sum\limits_{i=1}^{\dim(X)-2}(i+1)\lambda_i + \dim(X)\sum\limits_{i=\dim(X)-1}^{n-2}\lambda_i & \textnormal{ if } 3\leq \dim(X)\leq n-1, \\
    \sum\limits_{i=1}^{n-2}(i+1)\lambda_i & \textnormal{ if } \dim(X)=n.
\end{cases}$$

Let $\mu=\mathrm{lcm}(b_1,\ldots,b_{n-2})$. Then $\mu$ is a denominator for $\mM$ and $\mu\geq 3$.

\begin{theorem}\label{thm:mu_indep_flag_uniform}
$\mL(E)_{\leq n-1} \subseteq \mI_{\mu}(\mM)$.  Moreover, if $\mu\geq \left\lceil \frac{n}{2}\right\rceil$, then $\mL(E)=\mI_\mu(\mM)$.
\end{theorem}
\begin{proof}
  It is immediate that all the subspaces in $\mL(E)_{\leq 2}$ are $\mu$-independent, since they are strong independent spaces. Assume that $3\leq \dim(X)\leq n-1$. We show that $p_I\geq \frac{\dim(I)}{\mu}$ for all $I\leq X$ by using induction. Assume first that $\dim(X)=3$. Then
 $$p_X = 2\lambda_1 +3(1-\lambda_1)=3-\lambda_1>2,$$
 since $\lambda_1<1$. Moreover, since $\mu\geq 3$,  $\frac{\dim(X)}{\mu} = \frac{3}{\mu} \leq 1$. Hence we conclude that $p_X\geq \frac{\dim(X)}{\mu}$. For all other $I\leq X$, this is also clear, since they are $\mu$-independent. 

Assume that for all spaces $X\in\mL(E)_a$, for some $3<a<n-1$ we have that $p_I\geq \frac{\dim(I)}{\mu}$ for all $I\leq X$. 
In particular, the following holds:
\begin{align*}
    p_X &= \sum\limits_{i=1}^{a-2}k_i\lambda_i + a\sum\limits_{i=a-1}^{n-2}\lambda_i \\
    &= \sum\limits_{i=1}^{a-2}k_i\lambda_i + a\left(1-\sum\limits_{i=1}^{a-2}\lambda_i \right) \\
    &=\sum\limits_{i=1}^{a-2}(k_i-a)\lambda_i + a \geq \frac{a}{\mu}.
\end{align*}

We show that also for spaces of dimension $a+1$ the same property holds. Let $Y\in\mL(E)_{a+1}$. Then clearly for all $I\leq Y$, with $\dim(I)\leq a$, we have that $p_I\geq \frac{\dim(I)}{\mu}$. Moreover, we have that
\begin{align*}
    p_Y &= \sum\limits_{i=1}^{a-1}k_i\lambda_i + (a+1)\sum\limits_{i=a}^{n-2}\lambda_i \\
    &= \sum\limits_{i=1}^{a-1}k_i\lambda_i + (a+1)\left(1-\sum\limits_{i=1}^{a-1}\lambda_i \right) \\
    &=\sum\limits_{i=1}^{a-1}(k_i-a-1)\lambda_i + a+1\\
     &=\sum\limits_{i=1}^{a-2}(k_i-a-1)\lambda_i + (k_{a-1}-a-1)\lambda_{a-1}+ a+1\\
     &=\sum\limits_{i=1}^{a-2}(k_i-a)\lambda_i -\sum\limits_{i=1}^{a-2}\lambda_i -\lambda_{a-1}+ a+1\\
      &=\sum\limits_{i=1}^{a-2}(k_i-a)\lambda_i -\sum\limits_{i=1}^{a-1}\lambda_i + a+1\\
      &\geq \frac{a}{\mu} +1-\sum\limits_{i=1}^{a-1}\lambda_i\\
      &= \frac{a}{\mu} + \sum\limits_{i=a}^{n-2}\lambda_i\\
      &\geq \frac{a}{\mu} + \frac{1}{\mu} =\frac{a+1}{\mu}.
\end{align*}
This shows that every space of dimension at most $n-1$ is $\mu$-independent. We are left to show that if $\mu\geq \left\lceil \frac{n}{2}\right\rceil$, then $E$ is $\mu$-independent. Assume that $\bar{\lambda}=\min\{\lambda_i \; \mid \; i\in\{1,\dots, n-2\}\}$. Then $\bar{\lambda}=\frac{\bar{a}}{\bar{b}}$, for some $\bar{a},\bar{b}\in\mathbb{N}$. Then we have
\begin{align*}
    p_{E}&= \sum\limits_{i=1}^{n-2}k_i\lambda_i = \sum\limits_{i=1}^{n-2}(i+1)\lambda_i = \sum\limits_{i=1}^{n-2}i\lambda_i +1\\
    &\geq \sum\limits_{i=1}^{n-2}i\bar{\lambda} +1 = \frac{(n-1)(n-2)}{2}\bar{\lambda} +1 \\
     &\geq \sum\limits_{i=1}^{n-2}i\bar{\lambda} +1 = \frac{(n-1)(n-2)}{2}\frac{\bar{a}}{\mu} +1 \\
     &\geq \frac{n\bar{a}}{2\mu}+1 \geq \frac{n}{2\mu}+1.
\end{align*}
Since $\mu\geq\left\lceil\frac{n}{2}\right\rceil$, then we conclude that $p_{E}\geq \frac{n}{\mu}$.

\end{proof}

\subsection{Cyclic flats}

In this subsection, we illustrate the cyclic flats of the convex combination of two uniform $q$-matroids.

Let $1< k_1<k_2<n$, $\mM_1=\mU_{k_1,n}(q)$ and $\mM_2=\mU_{k_2,n}(q)$. In \cite[Proposition 3.30]{alfarano2024cyclic} it was shown that the cyclic flats of $\mM_i$ are $\langle 0 \rangle$ and $E$ and they have respectively rank values $0$ and~$k_i$, for $i=1,2$. Let $\mF_i$ and $\mO_i$ be respectively the collection of flats and cyclic spaces of $\mM_i$. 
It is clear that 
\begin{align*}
    \mF_i &= \mL(E)_{< k_i} \cup \F_q^n, \\
    \mO_i &= \mL(E)_{>k_i}\cup\langle 0 \rangle.
\end{align*}
Hence $\mF_1\subseteq \mF_2$ and $\mO_2\subseteq \mO_1$.
Let $\mF(\mM)$ and $\mO(\mM)$ be the sets of flats and cyclic spaces of $\mM$. 

\begin{lemma}\label{lem:flats_uniform}
    Let $\mM$ be any convex combination of $\mM_1$ and $\mM_2$. Then $\mF(\mM)=\mF_2$.
\end{lemma}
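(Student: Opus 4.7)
The plan is to apply Proposition~\ref{prop:flats_conv_comb}, which characterizes flats of a convex combination via the intersection of closure operators: $F$ is a flat of $\mM$ if and only if $\Cl_{\rho_1}(F)\cap\Cl_{\rho_2}(F)=\mL(F)_1$. So I would first compute $\Cl_{\rho_i}(F)$ explicitly for a uniform $q$-matroid, then do a dimension-based case analysis.

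First I would observe that for $\mU_{k,n}(q)$ with rank function $\rho(V)=\min\{\dim V, k\}$, the closure of a subspace $F$ is easy: if $\dim F<k$, then for any $x\in\mL(E)_1\setminus\mL(F)_1$ we have $\rho(F+x)=\dim F+1>\rho(F)$, so $\Cl(F)=\mL(F)_1$; if $\dim F\geq k$, then $\rho(F+x)=k=\rho(F)$ for all one-dimensional $x$, so $\Cl(F)=\mL(E)_1$. Applying this to $\rho_1$ and $\rho_2$ gives
\[
\Cl_{\rho_i}(F)=\begin{cases}\mL(F)_1&\text{if }\dim F<k_i,\\ \mL(E)_1&\text{if }\dim F\geq k_i,\end{cases}\quad i=1,2.
\]

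Next I would split into three cases according to where $\dim F$ sits relative to $k_1<k_2$. If $\dim F<k_1$, then $\Cl_{\rho_1}(F)=\mL(F)_1$, so the intersection is $\mL(F)_1$ and $F$ is a flat. If $k_1\leq\dim F<k_2$, then $\Cl_{\rho_1}(F)=\mL(E)_1$ but $\Cl_{\rho_2}(F)=\mL(F)_1$, and again the intersection equals $\mL(F)_1$, so $F$ is a flat. If $\dim F\geq k_2$, then both closures equal $\mL(E)_1$, so the intersection is $\mL(E)_1$, which equals $\mL(F)_1$ only when $F=E$. Combining these three cases yields
\[
\mF(\mM)=\mL(E)_{<k_2}\cup\{E\}=\mF_2,
\]
as desired.

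There is no real obstacle here: the only ingredient beyond Proposition~\ref{prop:flats_conv_comb} is the explicit form of the closure operator of a uniform $q$-matroid, which is essentially immediate from the definition of $\rho_i$. The containment $\mF_2\subseteq\mF(\mM)$ alternatively follows from Corollary~\ref{coro:flats_conv_comb}, but the case analysis above already gives both inclusions in one stroke.
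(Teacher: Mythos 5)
Your proposal is correct and follows essentially the same route as the paper: both hinge on Proposition~\ref{prop:flats_conv_comb}, with the containment $\mF_2\subseteq\mF(\mM)$ coming from the closure criterion (or Corollary~\ref{coro:flats_conv_comb}) and the reverse inclusion from the observation that for $\dim F\geq k_2$ with $F\neq E$ both rank functions are constant under adding one-dimensional spaces. Your explicit three-case computation of $\Cl_{\rho_i}(F)$ is just a slightly more spelled-out version of the paper's argument.
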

\begin{proof}
    Let $\rho_i$ be the rank function of $\mM_i$, for $i=1,2$. From \Cref{prop:flats_conv_comb}, we have that $\mF(\mM)=\{F\in\mL(E) \; \mid \; \Cl_{\rho_1}(F) \cap\Cl_{\rho_2}(F)  = \mL(F)_1\}$. Moreover, we have  $\mF_2 = \mF_1\cup \mF_2 \subseteq \mF(\mM)$. Let $X\notin \mF_2$, with $\dim(X)\geq k_2$. Hence, for every $x\in \mL(E)_1$ we have that $\rho_i(X) = \rho_i(X+x)$ for $i=1,2$. So, we conclude that $X$ cannot lie in $\mF(\mM)$.
\end{proof}

\begin{lemma}\label{lem:cyclic_uniform}
    Let $\mM$ be any convex combination of $\mM_1$ and $\mM_2$. Then $\mO(\mM)=\mO_1$.
\end{lemma}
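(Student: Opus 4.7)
The plan is to mirror the structure used for \Cref{thm: cyclic_paving_conv_comb}, splitting into a forward inclusion obtained from \Cref{coro:cyc_spaces_noloops} and a reverse inclusion obtained from \Cref{prop:cyc_spaces_noloops}. Since the two uniform $q$-matroids have no loops (every one-dimensional space has rank $1$ when $k_1, k_2 \geq 1$), both propositions apply.

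For the inclusion $\mO_1 \subseteq \mO(\mM)$, I would simply invoke \Cref{coro:cyc_spaces_noloops} to get $\mO_1 \cup \mO_2 \subseteq \mO(\mM)$, and then observe that since $k_1 < k_2$ we have $\mL(E)_{>k_2} \subseteq \mL(E)_{>k_1}$, so $\mO_2 \subseteq \mO_1$ and hence $\mO_1 \cup \mO_2 = \mO_1$.

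For the reverse inclusion, let $X \in \mL(E) \setminus \mO_1$. Then $X \neq \langle 0 \rangle$ and $1 \leq \dim(X) \leq k_1$, so $X$ is independent in both $\mM_1$ and $\mM_2$; in particular, $\rho_i(X) = \dim(X)$ for $i=1,2$. For every hyperplane $H \in \mH(X)$ we then have $\rho_i(H) \leq \dim(H) = \dim(X) - 1 < \rho_i(X)$, which means $H \in \mN_i(X)$. Consequently $\mN_1(X) = \mN_2(X) = \mH(X)$, and since $\dim(X) \geq 1$, $\mH(X)$ is nonempty. Thus $\mN_1(X) \cap \mN_2(X) \neq \emptyset$, and by \Cref{prop:cyc_spaces_noloops} the space $X$ cannot be cyclic in $\mM$.

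There is essentially no hard step here: the main obstacle is just verifying that the hypothesis of \Cref{coro:cyc_spaces_noloops} and \Cref{prop:cyc_spaces_noloops} (namely absence of loops) holds, which is immediate from $k_1, k_2 \geq 1$, and then noting the set-theoretic containment $\mO_2 \subseteq \mO_1$ to collapse the union.
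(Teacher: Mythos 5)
Your proof is correct and follows essentially the same route as the paper: both use \Cref{coro:cyc_spaces_noloops} together with the containment $\mO_2\subseteq\mO_1$ for the inclusion $\mO_1\subseteq\mO(\mM)$, and both show that any $X$ with $1\leq\dim(X)\leq k_1$ is independent in both uniform $q$-matroids, so that $\mN_1(X)=\mN_2(X)=\mH(X)\neq\emptyset$ and \Cref{prop:cyc_spaces_noloops} excludes $X$ from $\mO(\mM)$.
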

\begin{proof}
    Let $\rho_i$ be the rank function of $\mM_i$, for $i=1,2$. Since $\mM_i$ has no loops, from \Cref{prop:cyc_spaces_noloops}, we have that $\mO(\mM)=\{A\in\mL(E) \; \mid \; \mN_1(A)\cap \mN_2(A)=\emptyset\}$, where $\mN_i(A)=\{H\in\mH(A) \; \mid \; \rho_i(H)<\rho_i(A)\}$. Moreover, from \Cref{coro:cyc_spaces_noloops}, we have that $\mO_1 = \mO_1\cup \mO_2 \subseteq \mO(\mM)$.  Let $A\in\mL(E)$, be a subspace with $1\leq \dim(A)\leq k_1$. Then $A$ is independent in both $\mM_i$, since $ k_1<k_2$. Then, for every $H\in\mH(A)$, $H$ is also independent and hence $\rho_i(H)<\rho_i(A)$ and $\mN_1(A) \cap \mN_2(A) \ne \emptyset$. So, we conclude that $A$ cannot lie in $\mO(\mM)$. 
\end{proof}

\begin{proposition}\label{prop:cyc_flats_uniform}
Let $\mM$ be any convex combination of $\mM_1$ and $\mM_2$. 
    The collection of cyclic flats of $\mM$ is given by $\mZ(\mM) := \{\langle 0 \rangle , E\} \cup \{A \in\mL(E) \;\mid\; k_1< \dim(A) < k_2\}$. 
\end{proposition}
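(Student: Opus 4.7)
The plan is to obtain $\mZ(\mM)$ directly as the intersection $\mF(\mM) \cap \mO(\mM)$, using the two preceding lemmas which already pin down these collections completely.

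By \Cref{lem:flats_uniform}, the flats of $\mM$ are
\[
\mF(\mM) = \mF_2 = \mL(E)_{<k_2} \cup \{E\},
\]
and by \Cref{lem:cyclic_uniform}, the cyclic spaces of $\mM$ are
\[
\mO(\mM) = \mO_1 = \mL(E)_{>k_1} \cup \{\langle 0\rangle\}.
\]
Intersecting these two descriptions is a routine set-theoretic computation: the element $\langle 0\rangle$ lies in $\mL(E)_{<k_2}$ (since $k_2 \geq 1$) and also in the second set; the element $E$ lies in $\mL(E)_{>k_1}$ (since $k_1 < n$) and also in the first set; and for any other space $A$ we need simultaneously $\dim(A) < k_2$ and $\dim(A) > k_1$. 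This gives exactly the claimed description.

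There is essentially no obstacle here, since the work has been done in \Cref{lem:flats_uniform} and \Cref{lem:cyclic_uniform}. The only minor point to verify is that the two extreme elements $\langle 0\rangle$ and $E$ indeed appear, which follows from the hypothesis $1 < k_1 < k_2 < n$.
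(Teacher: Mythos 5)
Your proposal is correct and follows exactly the same route as the paper: the paper's proof is precisely the one-line computation $\mZ(\mM) = \mF(\mM)\cap\mO(\mM) = \mF_2\cap\mO_1$, relying on Lemmas \ref{lem:flats_uniform} and \ref{lem:cyclic_uniform}. Your version merely spells out the set-theoretic intersection and the endpoint checks for $\langle 0\rangle$ and $E$, which the paper leaves implicit.
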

\begin{proof}
    The collection of cyclic flats of $\mM$ is $\mZ(\mM) = \mF(\mM) \cap \mO(\mM) = \mF_2 \cap\mO_1$.
\end{proof}


\section{The characteristic Puiseux  polynomial}\label{sec: Puiseux_polyn}


In \cite[Sec.~3]{byrne2024weighted}, the characteristic polynomial of an integer $(q,r)$-polymatroid was introduced and studied. Moreover, the special case given by a $(q,1)$-polymatroid, i.e. a $q$-matroid, was investigated in more detail in \cite[Sec.~5]{jany2023proj_matroid}. In this section, we introduce a more general notion for rational $q$-polymatroids, inspired by \cite{byrne2024weighted}. However, this is not a polynomial anymore, but rather a truncated Puiseux series, i.e., a finite linear combination of monomials whose exponents may be rational and possibly negative.
Afterwards, we discuss some properties in the context of $q$-polymatroids arising from convex combinations of $q$-matroids.

Let us denote by $\mathbb{C}\{\!\{t\}\!\}$ the field of \emph{Puiseux series} with coefficients in $\mathbb{C}$, that is elements of the form
\[
    f=\sum_{k=k_0}^{+\infty}c_kt^{k/s},
\]
where $s$ is a positive integer and $k_0$ is an integer. In particular this means that we allow rational exponents of the indeterminate $t$, as long as all of these exponents have a bounded denominator and there exists a minimal one. We say that $f$ is a \emph{truncated Puiseux series} if we have only finitely many terms $c_k\neq 0$. 

The following definition is inspired by the concept of the characteristic polynomial given in \cite[Def. 23]{byrne2024weighted}.

\begin{definition}\label{def: char_Puiseux_series}
    Let $\mM=(\mL(E),\rho)$ be a rational $q$-polymatroid. We define the \textbf{characteristic Puiseux  polynomial of $\mM$} by
    \[
        \chi_\mM(t)=\sum_{X\in\mL(E)}\mu(\langle 0\rangle,X)t^{\ell(X)}\in\mathbb{C}\{\!\{t\}\!\}, 
    \]
    where, $\ell(X):=\rho(E)-\rho(X)$ for every $X\in\mL(E)$ and $\mu$ is the Möbius function, defined as 
    $$ \mu(\langle 0 \rangle, X)=(-1)^{\dim(X)}q^{\dim(X)\choose 2}.$$
\end{definition}

\begin{remark}\label{rem: char_Puiseux_series}
    From \Cref{def: char_Puiseux_series} we can immediately make the following observations:
    \begin{enumerate}
        \item[(a)] The characteristic Puiseux  polynomial $\chi_\mM$ of any rational $q$-polymaroid $\mM$ is indeed a truncated Puiseux series, due to the finiteness of $\mL(E)$. Its minimal exponent is always given by zero and its maximum exponent is $\rho(E)$. Furthermore, it has only integer coefficients.
        
        \item[(b)] When $\mM$ is a $(q,r)$-polymatroid for some $r\in\mathbb{N}$, the characteristic Puiseux polynomial reduces to the characteristic polynomial as defined in \cite[Def. 23]{byrne2024weighted}. Therefore our definition is a generalization of characteristic polynomial.
        
        \item[(c)] One can verify that given two isomorphic rational $q$-polymatroids $\mM_1$ and $\mM_2$, their characteristic Puiseux polynomials agree, i.e., $\chi_{\mM_1}(t)=\chi_{\mM_2}(t)$. Note that this constitutes a generalization of the result \cite[Lem. 25]{byrne2024weighted}, since the notion of isomorphism implies the notion of lattice-equivalence; see \cite[Def. 3]{byrne2024weighted}. Moreover the proof is identical to the one for \cite[Lem. 25]{byrne2024weighted}.

        \item[(d)] In general, every result8 in \cite[Sec. 3]{byrne2024weighted} about the characteristic polynomial of a $(q,r)$-polymatroid that only uses lattice-theoretic concepts, can be reproduced in a similar way for the characteristic Puiseux  polynomial of a rational $q$-polymatroid, for instance \cite[Lem. 26]{byrne2024weighted}. For all the results using concepts such as circuits, dependent spaces or independent spaces, we currently do not know if its possible to generalize them, using their rational $q$-polymatroid counterparts.      
    \end{enumerate}
\end{remark}

Now we want to examine rational $q$-polymatroids arising from rational convex combinations of two $q$-matroids.
We start with an explicit example.

\begin{example}\label{ex: char_Puiseux_series}
    Let $\mM_1=\mU_{2,3}(2)$ with $q$-rank function $\rho_1$ and $\mM_2=(\mL(\F_2^3),\rho_\mS)$ be the rank-$2$ paving $q$-matroid induced by the collection 
    \[
        \mS=\bigg\{\bigg\langle
        \begin{matrix}
            0&1&0\\
            0&0&1
        \end{matrix}\bigg\rangle\bigg\}\subset\mL(\F_2^3).
    \]
    Let $S$ be the only subspace in $\mS$.
    Consider the rational $q$-polymatroid $\mM=(\mL(\F_2^3),\rho)$ given by $\mM=\lambda\mM_1+(1-\lambda)\mM_2$ with rational coefficient $0<\lambda<1$. Then its characteristic Puiseux polynomial $\chi_\mM$ is given by the following computation:
    \begin{align*}
        \chi_\mM(t)&=\sum_{X\in\mL(\F_2^3)}\mu(\langle 0\rangle,X)t^{\ell(X)}\\
                   &= t^{\ell(\langle 0\rangle)} + \sum_{X\in\mL(\F_2^3)_1}(-1)t^{\ell(X)}+ \sum_{X\in\mL(\F_2^3)_2\setminus\mS}2t^{\ell(X)}+2t^{\ell(S)}-8t^{\ell(\F_2^3)}\\
                   &=t^2-7t^{1}+12t^{0}+2t^{1-\lambda}-8t^{0}\\
                   &=t^2-7t+4+2t^{1-\lambda}.
    \end{align*}
    
    Moreover, by \Cref{rem: char_Puiseux_series}, the characteristic polynomials $\chi_{\mM_1},\chi_{\mM_2}$ of the $q$-matroids $\mM_1$ and $\mM_2$, can also be easily computed using \Cref{def: char_Puiseux_series}. They are given as follows:
    \[
        \chi_{\mM_1}(t)=t^2-7t+6\quad\text{and}\quad\chi_{\mM_2}(t)=t^2-5t+4.
    \]

    Finally by comparing $\chi_\mM$ with these characteristic polynomials we observe that
     \[
        \chi_\mM(t)=\chi_{\mM_1}(t)-2+2t^{1-\lambda}=\chi_{\mM_1}(t)-\mu(\{0\},S)+\mu(\{0\},S)t^{1-\lambda}
    \]
    and similarly also
    \[
        \chi_\mM(t)=\chi_{\mM_2}(t)-2t+2t^{1-\lambda}=\chi_{\mM_2}(t)-\mu(\{0\},S)t+\mu(\{0\},S)t^{1-\lambda}.
    \]
    So, we see that the characteristic Puiseux polynomial of $\mM$ can be expressed in terms of either the characteristic polynomial of $\mM_1$ or $\mM_2$. 
\end{example}

The next results generalizes the observations of \Cref{ex: char_Puiseux_series} to arbitrary rational convex combinations of two paving $q$-matroids arising from \Cref{prop: paving_construction}, and having the same rank. We fix the following notation:

Let $1\leq k\leq n$ be an integer and $\mS_1,\mS_2\subseteq\mL(E)_k$ be two disjoint collections of $k$-dimensional spaces satisfying the condition from \Cref{prop: paving_construction}. Let $\mM_i$ be the paving $q$-matroid induced by $\mS_i$ and denote by $\chi_{\mM_i}$ its characteristic polynomial, for $i=1,2$. Set $\mM$ to be the convex combination of $\mM_1$ and $\mM_2$ with rational coefficient $0<\lambda<1$.

\begin{theorem}\label{prop: char_Puiseux_series_paving_conv_comb}
    The characteristic Puiseux polynomial of $\mM$ can be expressed in terms of the characteristic polynomials of either $\mM_1$ or $\mM_2$. In particular we get the following formulas:
    \begin{enumerate}
        \item In terms of $\chi_{\mM_1}$ we have:
        \[
            \chi_{\mM}(t)=\chi_{\mM_1}(t)+(-1)^kq^{k\choose 2}\Big(|\mS_1|(t^\lambda-t)+|\mS_2|(t^{1-\lambda}-1)\Big).
        \]

        \item In terms of $\chi_{\mM_2}$ we have:
        \[
            \chi_{\mM}(t)=\chi_{\mM_2}(t)+(-1)^kq^{k\choose 2}\Big(|\mS_1|(t^\lambda-1)+|\mS_2|(t^{1-\lambda}-t)\Big).
        \]
    \end{enumerate}
\end{theorem}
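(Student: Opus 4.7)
The plan is to compute $\chi_\mM(t)$ directly from \Cref{def: char_Puiseux_series} using the explicit description of the rank function of $\mM$ given in \Cref{rem: shape_paving_points}, and then observe that the terms in $\chi_\mM(t)$, $\chi_{\mM_1}(t)$, $\chi_{\mM_2}(t)$ corresponding to subspaces outside $\mS_1 \cup \mS_2$ agree. So the three Puiseux polynomials differ only on the $2|\mS_1|+|\mS_2|$-many (respectively $|\mS_1|+2|\mS_2|$-many) terms supported on $\mS_1 \cup \mS_2$, and the identities reduce to a short, explicit bookkeeping.

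First I would tabulate, for each $X \in \mL(E)$, the exponent $\ell(X) = \rho_\ast(E) - \rho_\ast(X)$ in the three cases $\rho_\ast \in \{\rho, \rho_1, \rho_2\}$. Since $\rho(E) = \rho_1(E) = \rho_2(E) = k$, \Cref{rem: shape_paving_points} (together with \Cref{prop: paving_construction} applied to each $\mM_i$) gives that for $X$ with $\dim(X) \leq k$ and $X \notin \mS_1 \cup \mS_2$ we have $\rho(X) = \rho_1(X) = \rho_2(X) = \dim(X)$, while for $\dim(X) \geq k+1$ all three rank values equal $k$. Hence the contributions to $\chi_\mM$, $\chi_{\mM_1}$, $\chi_{\mM_2}$ of subspaces outside $\mS_1 \cup \mS_2$ are identical.

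Next I would handle the spaces in $\mS_1 \cup \mS_2$. All such spaces are $k$-dimensional, so their Möbius value is uniformly $\mu(\langle 0 \rangle, X) = (-1)^k q^{k \choose 2}$. A direct reading off from the piecewise formulas gives the following table of exponents $\ell(X)$:
\begin{center}
\begin{tabular}{c|c|c|c}
 & in $\chi_\mM$ & in $\chi_{\mM_1}$ & in $\chi_{\mM_2}$ \\ \hline
$X \in \mS_1$ & $\lambda$ & $1$ & $0$ \\
$X \in \mS_2$ & $1 - \lambda$ & $0$ & $1$
\end{tabular}
\end{center}
Therefore
\[
\chi_\mM(t) - \chi_{\mM_1}(t) = (-1)^k q^{k \choose 2}\Big(|\mS_1|(t^\lambda - t) + |\mS_2|(t^{1-\lambda} - 1)\Big),
\]
which yields identity (1); identity (2) follows by the analogous subtraction against $\chi_{\mM_2}(t)$.

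The proof is essentially pure bookkeeping and I do not expect any real obstacle. The only point that requires mild care is ensuring that the rank assignments on $\mS_1$ and on $\mS_2$ are unambiguous when one takes the convex combination, which is precisely where the standing assumption $\mS_1 \cap \mS_2 = \emptyset$ (from the setup preceding \Cref{thm: mu_paving_conv_comb}) is used, and the fact that the condition on pairwise intersections in \Cref{prop: paving_construction} must be invoked separately for $\mS_1$ and for $\mS_2$ to guarantee that $\mM_1$ and $\mM_2$ really are paving $q$-matroids with the prescribed rank values on $\mS_1$ and $\mS_2$ respectively.
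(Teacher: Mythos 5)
Your proposal is correct and follows essentially the same route as the paper: both split the sum defining $\chi_\mM$ over $\mL(E)\setminus(\mS_1\cup\mS_2)$, $\mS_1$, and $\mS_2$, observe that the first block of terms coincides with the corresponding block in $\chi_{\mM_1}$ (resp.\ $\chi_{\mM_2}$) since the rank values agree there, and then compute the exponents $\ell(X)=\lambda$ on $\mS_1$ and $\ell(X)=1-\lambda$ on $\mS_2$ (versus $1$ and $0$ for $\mM_1$, and $0$ and $1$ for $\mM_2$) with the uniform M\"obius value $(-1)^kq^{\binom{k}{2}}$. The only blemish is the offhand count ``$2|\mS_1|+|\mS_2|$-many terms,'' which does not affect the argument.
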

\begin{proof}
    Let $p_{\mM_1},p_{\mM_2}$ and $p_\mM$ be the points in $\RP$ corresponding to $\mM_1,\mM_2$ and $\mM$ respectively. Moreover, for all $X\in\mL(E)$ we denote by $\mu(X)$ the Möbius value $\mu(\langle 0\rangle,X)$. 
    For $i=1,2$ we have
    \[
       p_{\mM_i}=(p_{\mM_i,X})_{X\in\mL(E)}=
       \begin{cases}
        \;\hfil k-1\;&\text{if }X\in\mS_i,\\
        \;\hfil \dim(X)\;&\text{if }\dim(X)\leq k\; \textnormal{and } X\not\in\mS_i,\\
        \;\hfil k\;&\text{otherwise}. 
        \end{cases} 
    \]
    and hence
    \[
        p_{\mM}=(p_{\mM,X})_{X\in\mL(E)}=
       \begin{cases}
        \;\hfil \dim(X)\;&\text{if }\dim(X)\leq k\; \textnormal{and } X\not\in\mS_1\cup\mS_2,\\
        \;\hfil \lambda(k-1)+(1-\lambda)k\;&\text{if } X\in\mS_1,\\
        \;\hfil \lambda k+(1-\lambda)(k-1)\;&\text{if } X\in\mS_2,\\
        \;\hfil k\;&\text{if } \dim(X)\geq k+1. 
        \end{cases}
    \]
    Note that $p_{\mM_1,E}=p_{\mM_2,E}=p_{\mM,E}=k$ and furthermore $p_{\mM_1,X}=p_{\mM_2,X}=p_{\mM,X}$ for all $X\in\mL(E)\setminus(\mS_1\cup\mS_2)$. Therefore we can split the characteristic Puiseux polynomial of $\mM$ as follows:
    \begin{align}
        \chi_\mM(t)&=\sum_{X\in\mL(E)}\mu(X)t^{\ell(X)}\notag \\
                   &= \sum_{X\in\mL(E)\setminus(\mS_1\cup\mS_2)}\mu(X)t^{\ell(X)}+\sum_{X\in\mS_1}\mu(X)t^{\ell(X)}+\sum_{X\in\mS_2}\mu(X)t^{\ell(X)}.\label{eq: spilt_Puiseux}
    \end{align}
     We prove the first statement of the theorem. In Eq.\eqref{eq: spilt_Puiseux} we observe that for all $X\in\mL(E)\setminus(\mS_1\cup\mS_2)$ we have
    \[
        \ell(X)=k-p_{\mM_1,X}=\ell_1(X),
    \]
    where $\ell_1$ denotes the function from \Cref{def: char_Puiseux_series} corresponding to $\mM_1$. For all $X\in\mS_1$ we obtain
    \[
        \ell(X)=k-\lambda(k-1)-(1-\lambda)k=\lambda
    \]
    and finally for all $X\in\mS_2$ we see 
    \[
        \ell(X)=k-\lambda k-(1-\lambda)(k-1)=1-\lambda.
    \]
    Using these observations, we can rewrite Eq. \eqref{eq: spilt_Puiseux} in terms of the characteristic polynomial $\chi_{\mM_1}$ and further simplify this expression:
    \small{\begin{align*}
        \chi_\mM(t)&=\chi_{\mM_1}(t)-\Big(\sum_{X\in\mS_1}\mu(X)t^{\ell_1(X)}+\sum_{X\in\mS_2}\mu(X)t^{\ell_1(X)}\Big)+\sum_{X\in\mS_1}\mu(X)t^{\lambda}+\sum_{X\in\mS_2}\mu(X)t^{1-\lambda}\\
                   &=\chi_{\mM_1}(t)-(-1)^kq^{k\choose 2}|\mS_1|t-(-1)^kq^{k\choose 2}|\mS_2|+(-1)^kq^{k\choose 2}|\mS_1|t^\lambda+(-1)^kq^{k\choose 2}|\mS_2|t^{1-\lambda}\\
                   &=\chi_{\mM_1}(t)+(-1)^kq^{k\choose 2}\Big(|\mS_1|(t^\lambda-t)+|\mS_2|(t^{1-\lambda}-1)\Big),
    \end{align*}}
    where we used for the second equality that $\ell_1(X)=1$ for all $X\in\mS_1$, $\ell_1(X)=0$ for all $X\in\mS_2$ and finally that $\mu(X)=(-1)^kq^{k\choose 2}$ for all $X\in\mL(E)_k$. The second statement can be deduced with a similar argument by replacing $\chi_{\mM_1},\ell_1$ with $\chi_{\mM_2},\ell_2$. 
\end{proof}     

\begin{remark}\label{rem: char_Puiseux_series_paving_conv_comb}
    We want to emphasize that \Cref{prop: char_Puiseux_series_paving_conv_comb} does not say only that $\chi_{\mM}$ is determined by either $\chi_{\mM_1}$ or $\chi_{\mM_2}$. By using \cite[Thm. 5.8]{jany2023proj_matroid}, we also observe that $\chi_{\mM}$ can be computed by just using the flats of either $\mM_1$ or $\mM_2$. In other words, in order to determine $\chi_{\mM}$ we do not need any concepts associated to the $q$-polymatroid $\mM$, but rather only the well understood notion of flats of either of the $q$-matroids $\mM_1$ or $\mM_2$.          
\end{remark}

Note that the assumption $\mS_1\cap\mS_2=\emptyset$ is just for simplicity and \Cref{rem: char_Puiseux_series_paving_conv_comb} still holds if we omit this condition. However, in that case the resulting characteristic Puiseux polynomial of $\mM$ has a more intricate expression.

Finally, we note that situations like \Cref{prop: char_Puiseux_series_paving_conv_comb}, in which the characteristic Puiseux polynomial admits a particularly compact expression, appear to be rather rare. Nonetheless, there are still many open questions concerning the characteristic Puiseux polynomial of general rational $q$-polymatroids that we have not studied yet. We will mention some ideas for further research in \Cref{sec: final_remarks}. 


\section{Representable \emph{q}-polymatroids}\label{sec: rep_q_polymats}

In this section, we comment on the role of representable $q$-polymatroids in the polytope of all $q$-rank functions. One motivation for studying these objects is their link to rank-metric codes; see \cite{jurrius2018defining, gorla2019rank, shiromoto2019codes, gluesing2022q}. 

We start by briefly recalling some basic notions on rank-metric codes; for a more detailed treatment, we refer the reader to \cite{de78,gabidulin1985theory,gorla2021rank}.
For this purpose, we endow the space of matrices $\F_{q}^{n \times m}$ with the \textbf{rank distance}, defined by $\mathrm{d}(A,B):=\rk(A-B)$, for all $A,B\in\F_{q}^{n \times m}$.

\begin{definition}
 We say that $C \leq \F_q^{n \times m}$ is an {\bf $\F_q$-linear rank-metric code} or a {\bf matrix code} if $C$ is an $\F_q$-subspace of $\F_{q}^{n \times m}$. Its \textbf{minimum distance} is
 $$\mathrm{d}(C):=\min\{\rk(M) \;\mid\; M \in C, \; M \neq 0\}.$$ We say that $C$ is an $\F_q$-$[n \times m, k,d]$ rank-metric code if it has $\F_q$-dimension $k$ and minimum distance $d$. 
The \textbf{dual code} of $C$ is defined to be $C^{\perp}=\{M\in\F_q^{n\times m}: \mathrm{Tr}(MN^\top)=0 \textnormal{ for all } N\in~C\}$.
\end{definition} 

The parameters $n,m,k,d$ of a rank-metric code are related by the following inequality known as Singleton bound (see e.g. \cite{de78}):
$$k \leq \max\{n,m\}(\min\{n,m\}-d+1).$$
Codes whose parameters meet the Singleton bound with equality are called \textbf{maximum rank distance} codes or \textbf{MRD} for short.

It is known that an $\F_q$-linear subspace of $\F_q^{n\times m}$ induces a $q$-polymatroid; see \cite{gorla2019rank, shiromoto2019codes}. One way to describe this correspondence is as follows. Also for this section, we consider $E=\F_q^n$ and $\mL(E)$ the collection of all subspaces of $E$. For $U\in\mL(E)$, we denote by $U^\perp$ the orthogonal complement of $U$ with respect to the standard inner product. By a slight abuse of notation, we use the same symbol to denote both the orthogonal complement of a space and the dual of a code. This distinction should be clear from the context and should not cause confusion.
\begin{definition}\label{def:codepoly}
Let $C$ be an $\F_q$-$[n\times m,k]$ rank-metric code.
For each subspace $U\in\mL(E)$, we define
$$C(U):=\{M \in C \;\mid\; \mathrm{colsp}(M) \leq U^\perp\}.$$

It is immediate to see that for every $U\in\mL(E)$, $C(U)$ is an $\F_q$-subspace of $\F_q^{n\times m}$.
Let 
$$\rho: \mL (E) \longrightarrow \mathbb{Q}_{\geq 0}, \quad U\longmapsto\frac{k-\dim(C(U))}{m}.$$
Then $(\mL(E),\rho)$ is a $q$-polymatroid \cite[Theorem 5.3]{gorla2019rank} and we denote it by $\mM[C]$.	
\end{definition}

A $q$-polymatroid arising from a rank-metric code is said to be \textbf{representable}. In particular, this means that the rank function of representable $q$-polymatroids gives a rational point in the polytope $\RP$. 

Note that $\F_q^{n\times m}\cong \F_{q^m}^n$. If an $\F_q$-subspace of $\F_q^{n\times m}$ is in addition $\F_{q^m}$-linear, it gives rise to a $q$-matroid as follows.
	
\begin{definition}\label{def:vector_codes}
Let $C$ be a $k$-dimensional $\mathbb{F}_{q^m}$-linear subspace of $\F_{q^m}^n$. For every $W \in\mL(E)$, we define
$$C(W):=\{x \in C \;\mid\; \langle x_1, \ldots, x_n\rangle_{\F_q} \leq W^\perp\}.$$ 
Let $\rho: \mL (E) \longrightarrow \mathbb{Z}_{\geq 0}$ be defined by
$\displaystyle \rho(W):=k-\dim_{\F_{q^m}}(C(W)).$
Then $(\mL(E),\rho)$ is a $q$-matroid \cite[Thm. 24]{jurrius2018defining} and we also denote it by $\mM[C]$. Note that in this case, $k$ is the dimension of $C$ over $\F_{q^m}$. 	
\end{definition}

Given a rank-metric code $C\leq \F_q^{n\times m}$, in general it is not immediate to determine the dimension of $C(U)$, for $U\leq\F_q^n$. Indeed, we recall the following fact.

\begin{proposition}\cite[Prop. 6.2]{gorla2019rank}
    Let $C\leq \F_q^{n\times m}$ be a nonzero rank-metric code with minimum rank distance $d$ and let $d^\perp$ be the rank distance of $C^\perp$. Then for every $U\in\mL(E)$, 
    $$\rho_{\mM[C]}(U) = 
    \begin{cases}
        \frac{\dim(U)}{m} & \textnormal{ if } \dim(U)> n-d, \\
        \dim(U) & \textnormal{ if } \dim(U)<d^\perp.
    \end{cases}$$
\end{proposition}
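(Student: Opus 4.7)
The plan is to apply the defining formula $\rho_{\mM[C]}(U) = (k - \dim_{\F_q}(C(U)))/m$ in the two regimes for $\dim(U)$ and compute $\dim_{\F_q}(C(U))$ directly. The two cases are mirror images of each other: the first invokes the minimum distance $d$ of $C$, while the second invokes the minimum distance $d^\perp$ of $C^\perp$, so the argument has a symmetry that I would exploit to keep the write-up short.

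For the first case $\dim(U) > n-d$, I would note that $\dim(U^\perp) = n-\dim(U) < d$. If $M \in C(U)$, then $\mathrm{colsp}(M) \leq U^\perp$, so $\rk(M) \leq \dim(U^\perp) < d$, which forces $M=0$ by the defining property of $d$. Hence $C(U)=\{0\}$, so $\dim_{\F_q}(C(U))=0$, giving the claimed value of $\rho_{\mM[C]}(U)$ directly from the definition.

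For the second case $\dim(U) < d^\perp$, set $s := \dim(U)$ and fix a basis $u_1, \dots, u_s$ of $U$. I would introduce the $\F_q$-linear evaluation map $\psi_U \colon C \to \F_q^{s \times m}$ defined by $M \mapsto (u_i^\top M)_{i=1}^s$, viewing the output as an $s \times m$ matrix whose $i$-th row is $u_i^\top M$. By construction $\ker(\psi_U) = C(U)$, so surjectivity of $\psi_U$ will give $\dim_{\F_q}(C(U)) = k - ms$ by rank-nullity, and hence $\rho_{\mM[C]}(U) = s = \dim(U)$ by the defining formula.

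The main step is thus surjectivity of $\psi_U$, which I would prove by duality under the trace pairing $\langle M, N \rangle = \mathrm{Tr}(MN^\top)$ on $\F_q^{n\times m}$. A short computation shows that the transpose of $\psi_U$ (with respect to this pairing and the analogous one on $\F_q^{s\times m}$) sends $Y$ to the $n\times m$ matrix $U_0 Y$, where $U_0$ is the $n \times s$ matrix with columns $u_1,\dots,u_s$; its image is therefore exactly $\{N \in \F_q^{n\times m} : \mathrm{colsp}(N) \leq U\}$. Consequently $\psi_U$ fails to be surjective iff $C^\perp$ contains a nonzero $N$ with $\mathrm{colsp}(N) \leq U$; but any such $N$ would satisfy $\rk(N) \leq \dim(U) < d^\perp$, contradicting the minimum rank distance of $C^\perp$. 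The principal obstacle is the bookkeeping around this trace-form duality on the matrix space; once the identifications are carefully fixed, the contradiction with $d^\perp$ is immediate and the proof concludes.
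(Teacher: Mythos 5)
The paper itself gives no proof of this proposition --- it is imported verbatim with the citation \cite[Prop.~6.2]{gorla2019rank} --- so there is no internal argument to compare against; your duality proof is essentially the standard one for that result. Your second case is sound: $\ker(\psi_U)=C(U)$, the adjoint of $\psi_U$ under the trace pairings is $Y\mapsto U_0Y$, whose image is $\{N\in\F_q^{n\times m}:\mathrm{colsp}(N)\leq U\}$, so failure of surjectivity would produce a nonzero $N\in C^{\perp}$ with $\rk(N)\leq\dim(U)<d^{\perp}$, a contradiction; rank--nullity then yields $\dim_{\F_q}(C(U))=k-m\dim(U)$ and hence $\rho_{\mM[C]}(U)=\dim(U)$.

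In the first case, however, there is a mismatch you should not gloss over. From $C(U)=\{0\}$ the definition gives $\rho_{\mM[C]}(U)=k/m=\dim_{\F_q}(C)/m$, a constant independent of $U$, whereas the statement as printed asserts $\dim(U)/m$. These are genuinely different: for $C=\langle I_2\rangle\leq\F_q^{2\times 2}$ one has $d=2$, $k=1$, and $\rho_{\mM[C]}(E)=1/2$, not $\dim(E)/2=1$. The displayed formula is a misprint in the paper --- the cited result reads $\dim_{\F_q}(C)/m$, which is also what the MRD statements quoted later in the same section require, since there $k/m$ equals $n-d+1$ (for $m\geq n$) resp.\ $n(m-d+1)/m$ (for $m<n$). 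So your computation is the correct one, but your closing claim that it ``gives the claimed value directly from the definition'' is false for the statement as displayed; you should either prove the corrected statement with value $\dim_{\F_q}(C)/m$ or explicitly flag and fix the typo before asserting the match.
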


There are some cases where we are able to say something more about the $q$-rank function of the $q$-polymatroid associated to a rank-metric code.

MRD codes constitute the most studied class of rank-metric codes. First of all, it is well known that MRD codes exist for every choice of parameters $n,m,d,q$. Furthermore, in \cite{gorla2019rank} it is shown that if $m\geq n$, then the $q$-polymatroid arising from an MRD
code is the uniform $q$-matroid of rank $n - d + 1$, where $d$ is the minimum rank distance of the code. More specifically, we have the following result.

\begin{proposition}\cite[Cor. 6.6]{gorla2019rank} Let $m\geq n$ and $C \leq \F_q^{n\times m}$ be an MRD code with minimum rank distance $d$. Let $\rho_\mM$ be the rank function of the $q$-polymatroid $\mM[C]$. Then for every $A\in\mL(E)$,
$$\rho_\mM(A) = \min\{n-d+1, \dim(A)\}.$$
Hence $\mM[C]$ is the uniform $q$-matroid $\mU_{n-d+1,n}(q)$.
    
\end{proposition}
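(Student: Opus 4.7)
The plan is to combine the two regimes already identified in Proposition~6.2 of \cite{gorla2019rank} with the explicit value of $k=\dim_{\F_q}(C)$ for an MRD code. Since $m\geq n$, being MRD gives $k = m(n-d+1)$, and the target equation $\rho_\mM(A)=\min\{n-d+1,\dim(A)\}$ splits naturally into two cases according to whether $\dim(A)$ is at most or at least $n-d+1$, with equality at the overlap.

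For the small-dimensional case I would invoke the fact that the dual of an MRD code in $\F_q^{n\times m}$ (with $m\geq n$) is again MRD. From $\dim_{\F_q}(C^\perp)=nm-m(n-d+1)=m(d-1)$ and the MRD formula $\dim_{\F_q}(C^\perp)=m(n-d^\perp+1)$ one reads off $d^\perp = n-d+2$. Hence for any $A$ with $\dim(A)\leq n-d+1$ we have $\dim(A)<d^\perp$, so the second branch of Proposition~6.2 gives $\rho_\mM(A)=\dim(A)$, which agrees with $\min\{n-d+1,\dim(A)\}$ in this regime.

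For the large-dimensional case I would argue directly (rather than relying on the first branch of Prop.~6.2), because the cleanest route is to show $C(A)=\{0\}$. Suppose $M\in C(A)$ is nonzero. Then $\operatorname{colsp}(M)\leq A^\perp$, so
\[
\rk(M)\leq \dim(A^\perp)=n-\dim(A)\leq d-1,
\]
contradicting that every nonzero element of $C$ has rank at least $d$. Therefore $C(A)=0$, and Definition~\ref{def:codepoly} yields
\[
\rho_\mM(A)=\frac{k-\dim_{\F_q}(C(A))}{m}=\frac{k}{m}=n-d+1=\min\{n-d+1,\dim(A)\}.
\]

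The two cases cover all $A\in\mL(E)$ (and agree at $\dim(A)=n-d+1$), proving the formula. Comparing with Definition~\ref{def:uniform}, this rank function is precisely that of $\mU_{n-d+1,n}(q)$, establishing the second assertion. There is no serious obstacle here: the only step requiring any care is the computation $d^\perp=n-d+2$, which rests on the duality of MRD codes (and the hypothesis $m\geq n$, guaranteeing $C^\perp$ is MRD in the usual sense). If one wished to avoid invoking Proposition~6.2 altogether, the small-dimensional case could be handled by showing that the restriction map $C\to \F_q^{s\times m}$, $M\mapsto BM$ (for $B$ a basis matrix of $A$), is surjective whenever $s=\dim(A)\leq n-d+1$, whose kernel is $C(A)$; this is equivalent to the dual-MRD computation and gives the same conclusion.
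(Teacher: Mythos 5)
Your argument is correct. Note that the paper does not prove this statement at all: it is quoted verbatim as Corollary 6.6 of the cited reference \cite{gorla2019rank}, so there is no internal proof to compare against. Your two-case split is the natural one and both halves are sound. The large-dimensional case is handled cleanly and directly: if $\dim(A)\geq n-d+1$ then any nonzero $M\in C(A)$ would have $\rk(M)\leq\dim(A^\perp)=n-\dim(A)\leq d-1$, contradicting the minimum distance, so $C(A)=0$ and $\rho_\mM(A)=k/m=n-d+1$ using $k=m(n-d+1)$ from the Singleton bound with $m\geq n$. The small-dimensional case rests on the standard Delsarte fact that the dual of an MRD code is MRD, from which $d^\perp=n-d+2$ follows and the second branch of the quoted Proposition 6.2 applies. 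The only point worth flagging is the degenerate case $d=1$, where $C=\F_q^{n\times m}$ and $C^\perp=0$, so $d^\perp$ is not literally defined; there the conclusion $\rho_\mM(A)=\dim(A)$ is immediate from $\dim C(A)=m(n-\dim(A))$, so this is a cosmetic rather than substantive issue. Your closing remark that the small-dimensional case is equivalent to surjectivity of $M\mapsto BM$ onto $\F_q^{s\times m}$ for $s\leq n-d+1$ (with kernel $C(A)$) is also accurate and would give a self-contained alternative to invoking MRD duality.
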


We can see that this is not the case for $m<n$. Indeed, we recall the following result obtained in \cite{gluesing2022q}.

\begin{theorem}\cite[Theorem 3.10]{gluesing2022q}
    Let $m<n$ and $C \leq \F_q^{n\times m}$ be an MRD code with minimum rank distance $d$.  Then for every $U\in\mL(E)$,  we have
    $$\rho_{\mM[C]}(U) = 
    \begin{cases}
        \frac{n(m-d+1)}{m} & \textnormal{ if } \dim(U)\geq n-d+1, \\
        \dim(U) & \textnormal{ if } \dim(U)\leq m-d+1.
    \end{cases}$$
    Moreover, $\rho_{\mM[C]}(U)\geq \max\{1,\frac{\dim(U)}{m}\}(m-d+1)$ if $\dim(U)\in\{m-d+2, n-d\}$.
\end{theorem}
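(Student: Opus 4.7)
The plan is to prove the two exact values and then the lower bound, with a duality identity linking $\dim C(U)$ to $\dim C^\perp(U^\perp)$ doing the heavy lifting for the second exact value.

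\textbf{First exact value.} When $\dim U \geq n-d+1$ we have $\dim U^\perp \leq d-1$, so every $M \in C(U)$ has $\mathrm{colsp}(M) \leq U^\perp$ and hence rank at most $d-1$. Since the minimum rank distance of $C$ is $d$, this forces $C(U) = \{0\}$, so $\rho_{\mM[C]}(U) = k/m$. The MRD property for $m<n$ gives $k = n(m-d+1)$, which yields the claimed value.

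\textbf{Second exact value.} Fix a basis of $U$ and assemble it as the rows of a matrix $P \in \F_q^{\dim U \times n}$ of rank $\dim U$. The linear map $\phi_U : C \to \F_q^{\dim U \times m}$, $M \mapsto PM$, has kernel $C(U)$. Using the trace pairing on $\F_q^{\dim U \times m}$, the annihilator of $\mathrm{im}\,\phi_U$ corresponds under the injection $N \mapsto P^\top N$ to the subspace of $C^\perp$ consisting of matrices with column space in $U$, i.e.\ $C^\perp(U^\perp)$. Dimension counting yields the duality identity
\[
\dim C(U) = k - m\dim U + \dim C^\perp(U^\perp).
\]
Invoking the classical fact that the dual of an MRD code is again MRD, its minimum distance works out to $d^\perp = m-d+2$. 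The hypothesis $\dim U \leq m-d+1$ then gives $\dim U^\perp \geq n - d^\perp + 1$, so the first exact value applied to $C^\perp$ forces $C^\perp(U^\perp) = \{0\}$. Substituting back, $\rho_{\mM[C]}(U) = \dim U$.

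\textbf{Lower bound.} View $C(U)$ in coordinates with respect to a basis of $U^\perp$ to identify it with a rank-metric code sitting inside $\F_q^{(n-\dim U)\times m}$ with minimum distance at least $d$. Applying the classical Singleton bound in the appropriate orientation yields $\dim C(U) \leq (n-\dim U)(m-d+1)$ when $\dim U \leq n-m$, and $\dim C(U) \leq m(n-\dim U-d+1)$ when $\dim U \geq n-m$; after substitution both simplify to $\rho_{\mM[C]}(U) \geq \dim U\,(m-d+1)/m$. Separately, any $V \leq U$ with $\dim V = m-d+1$ satisfies $\rho_{\mM[C]}(V) = m-d+1$ by the second exact value, so monotonicity (R2) gives $\rho_{\mM[C]}(U) \geq m-d+1$. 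Taking the maximum of the two bounds recovers $\rho_{\mM[C]}(U) \geq \max\{1, \dim U/m\}(m-d+1)$.

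\textbf{Main obstacle.} The only nontrivial step is the duality identity: identifying the annihilator of $\mathrm{im}\,\phi_U$ with $C^\perp(U^\perp)$ requires a careful trace-pairing computation and tracking of transposes. Once this is established, together with MRD-duality for $C^\perp$, the remainder of the proof is routine.
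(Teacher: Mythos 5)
This theorem is not proved in the paper at all: it is quoted verbatim from \cite[Thm. 3.10]{gluesing2022q}, so there is no in-paper argument to compare against; I can only assess your proposal on its own merits, and it checks out. The first value is immediate as you say. Your duality identity is correct: with $P\in\F_q^{\dim U\times n}$ of full row rank spanning $U$, the kernel of $M\mapsto PM$ on $C$ is exactly $C(U)$, and since $\mathrm{Tr}\bigl((PM)N^\top\bigr)=\mathrm{Tr}\bigl(M(P^\top N)^\top\bigr)$, the annihilator of the image maps isomorphically (via $N\mapsto P^\top N$) onto the codewords of $C^\perp$ with column space in $U$, giving $\dim C(U)=k-m\dim U+\dim C^\perp(U^\perp)$; combined with Delsarte's MRD-duality ($d^\perp=m-d+2$ since $m<n$) this yields the second value. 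For the lower bound, identifying $C(U)$ with a code in $\F_q^{(n-\dim U)\times m}$ of minimum distance at least $d$ and applying the Singleton bound is sound; the only imprecision is the phrase ``both simplify to'': in the regime $\dim U\ge n-m$ the bound you actually get is $\rho_{\mM[C]}(U)\ge \dim U-\tfrac{(d-1)(n-m)}{m}$, which is at least $\tfrac{\dim U}{m}(m-d+1)$ precisely because $\dim U\ge n-m$, so the stated conclusion still follows (and the degenerate case $C(U)=0$ is trivially fine). The $\ge m-d+1$ part via a subspace $V\le U$ of dimension $m-d+1$ and monotonicity is correct since $\dim U\ge m-d+2$ in the stated range. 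Altogether your route—reducing the second value to the first via the trace-pairing duality identity plus MRD-duality, and the bound via Singleton on the restricted code—is a legitimate self-contained proof of the cited result.
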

	
For $m=n-1$, the interval $\{m-d+2, n-d\}$ is empty for every value of the minimum distance~$d$. Hence, we have the following immediate corollary.

\begin{corollary}\label{cor:rank_function_special_MRD}
    Let $C \leq \F_q^{n\times (n-1)}$ be an MRD code with minimum rank distance $d$.  Then for every $U\in\mL(E)$, 
    $$\rho_{\mM[C]}(U) = 
    \begin{cases}
        \frac{n(n-d)}{n-1} & \textnormal{ if } \dim(U)\geq n-d+1, \\
        \dim(U) & \textnormal{ if } \dim(U)\leq n-d.
    \end{cases}$$
\end{corollary}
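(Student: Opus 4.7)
My plan is to derive this corollary directly from the preceding \cite[Theorem 3.10]{gluesing2022q} by specializing to $m = n-1$. That theorem gives an \emph{exact} value of $\rho_{\mM[C]}(U)$ in two regimes — $\dim(U) \leq m-d+1$ and $\dim(U) \geq n-d+1$ — and only a lower bound in the intermediate regime $\dim(U) \in \{m-d+2,\ldots,n-d\}$. So the entire difficulty of turning the statement into an exact formula would come from that middle interval, and the approach is to observe that this interval is empty in our setting.

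First I would verify the emptiness. Setting $m = n-1$, the lower endpoint of the intermediate interval is $m-d+2 = n-d+1$, which is strictly greater than the upper endpoint $n-d$. Hence $\{m-d+2,\ldots,n-d\} = \emptyset$ for every admissible $d$, and every subspace $U \leq \F_q^n$ belongs to exactly one of the two exact regimes: either $\dim(U) \leq n-d$ or $\dim(U) \geq n-d+1$.

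Next I would substitute $m = n-1$ into the two formulas provided by the theorem. In the low-dimensional regime $\dim(U) \leq m-d+1 = n-d$, the theorem yields $\rho_{\mM[C]}(U) = \dim(U)$, matching the lower branch of the claim. In the high-dimensional regime $\dim(U) \geq n-d+1$, the theorem yields
\[
\rho_{\mM[C]}(U) \;=\; \frac{n(m-d+1)}{m} \;=\; \frac{n(n-d)}{n-1},
\]
which is precisely the upper branch. Combining the two cases exhausts $\mL(E)$ and gives the corollary.

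I do not anticipate a genuine obstacle: the argument is a purely arithmetic specialization. The only point requiring attention is the emptiness of the middle interval, which hinges on the single inequality $m-d+2 > n-d$ collapsing to $m > n-2$, and this is exactly the hypothesis $m = n-1$. If anything, the only care one must take is noting that even the edge values $d = 1$ (which would make the high-dimensional regime trivial, since $\rho_{\mM[C]}(U) = \dim(U)$ for all $U$) and $d = n$ are consistent with the stated formula, so no separate boundary case needs to be analyzed.
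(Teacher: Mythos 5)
Your proposal is correct and matches the paper's argument exactly: the paper also derives the corollary by noting that for $m=n-1$ the intermediate interval $\{m-d+2,\ldots,n-d\}$ in \cite[Theorem 3.10]{gluesing2022q} is empty, so the two exact regimes cover all of $\mL(E)$ and the substitution $m=n-1$ gives both branches of the formula.
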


In particular, if $d>1$, $\mM[C]$ is not a $q$-matroid; see \cite[Cor. 3.11]{gluesing2022q}.

In the following result, we consider two representable $q$-polymatroids arising from rank-metric codes in $\F_q^{n\times (n-1)}$. Let $C_i$ be an $\F_q$-$[n\times m,k_i,d_i]$ MRD code, for $i=1,2$. Let $\mM_i=\mM[C_i]$ and let $\rho_i$ be its $q$-rank function, for $i=1,2$. Then, since $C_i$ is MRD, we have that $k_i=n(n-d)$ and, in particular, $\frac{k_i}{n}$ is an integer. The $q$-rank function $\rho_i$ reads as
$$ \rho_i(U)=\begin{cases}
    \dim(U) & \textnormal{ if } \dim(U)\leq \frac{k_i}{n},\\
    \frac{k_i}{n-1} & \textnormal{ if } \dim(U)> \frac{k_i}{n}.
\end{cases}$$

The next result generalizes in some way \Cref{thm: mu_indep_uniform2}. The proof follows the same idea as the one in \Cref{thm: mu_indep_uniform2}, but we include it for the sake of completeness.

\begin{theorem}\label{thm: mu_indep_poly}
    Assume that $1< k_1<k_2$ and $k_1+k_2\geq n$.  Let $\lambda=\frac{a}{b}\in\mathbb{Q}$, with $a,b\in\mathbb{N}$, $\gcd(a,b)=1$ and $\mM=(1-\lambda)\mM_1+\lambda\mM_2$. Then $\mu=b(n-1)$ is a denominator for $\mM$ and $\mI_\mu(\mM) = \mL(E)$.
\end{theorem}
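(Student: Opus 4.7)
The plan is to adapt the proof of Theorem~\ref{thm: mu_indep_uniform2} to the representable setting, replacing the piecewise-constant rank functions of the uniform $q$-matroids by those of MRD codes coming from \Cref{cor:rank_function_special_MRD}. Concretely, for each $U \in \mL(E)$ and $i \in \{1,2\}$, we have $\rho_i(U) = \dim(U)$ when $\dim(U) \leq k_i/n$ and $\rho_i(U) = k_i/(n-1)$ otherwise; note that $k_i/n = n - d_i$ is a non-negative integer. Since $k_1 < k_2$, the behaviour of $\rho(U) = (1-\lambda)\rho_1(U) + \lambda\rho_2(U)$ breaks into three regimes according to whether $\dim(U) \leq k_1/n$, $k_1/n < \dim(U) \leq k_2/n$, or $\dim(U) > k_2/n$.

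To establish that $\mu = b(n-1)$ is a denominator, I would write $\lambda = a/b$, so $1 - \lambda = (b-a)/b$, and compute $\mu\rho(U)$ in each regime: respectively, one obtains the values $b(n-1)\dim(U)$, $(b-a)k_1 + a(n-1)\dim(U)$, and $(b-a)k_1 + ak_2$. All three are non-negative integers, which proves the denominator claim.

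For the $\mu$-independence statement $\mI_\mu(\mM) = \mL(E)$, I would show $\mu\rho(J) \geq \dim(J)$ for every $J \in \mL(E)$, again splitting by the above three regimes. The first case is immediate since $\mu\rho(J) = b(n-1)\dim(J) \geq \dim(J)$. In the second case, $\mu\rho(J) = (b-a)k_1 + a(n-1)\dim(J) \geq a(n-1)\dim(J) \geq \dim(J)$, using $a \geq 1$ and $n \geq 2$ together with $(b-a)k_1 \geq 0$. The third case is the only delicate one: here $\mu\rho(J) = (b-a)k_1 + ak_2$ no longer depends on $\dim(J)$, and the naive bound $\dim(J) \leq n$ must be matched from the other side. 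Since $0 < \lambda < 1$ with $\gcd(a,b)=1$ forces $1 \leq a \leq b-1$, we get $(b-a)k_1 \geq k_1$ and $ak_2 \geq k_2$, hence $\mu\rho(J) \geq k_1 + k_2 \geq n \geq \dim(J)$.

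This last step is the principal obstacle and is precisely where the hypothesis $k_1 + k_2 \geq n$ enters, in direct parallel with its role in Theorem~\ref{thm: mu_indep_uniform2}. The only new technical ingredient compared to the uniform case is the factor $(n-1)$ in the denominator, which arises because the MRD rank function saturates at the rational value $k_i/(n-1)$ rather than at an integer, but this is transparently absorbed into $\mu$ and does not affect the structure of the argument.
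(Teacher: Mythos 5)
Your proposal is correct and follows essentially the same route as the paper's proof: the same three regimes for $\rho$ determined by $\dim(U)$ relative to $k_1/n$ and $k_2/n$, the same computation showing $\mu p_X=(b-a)k_1+a(n-1)\dim(X)$ resp.\ $(b-a)k_1+ak_2$ is an integer, and the same final bound $(b-a)k_1+ak_2\geq k_1+k_2\geq n$ using $1\leq a<b$ and $k_1+k_2\geq n$. Your chain of inequalities in the last case is marginally more direct than the paper's, but the argument is the same in substance.
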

\begin{proof}
     Let $p_\mM=(p_X)_{X\in\mL(E)}$ be the point in $\RP$ corresponding to $\mM$. Then 
    $$p_X = \begin{cases}
        \dim(X) & \textnormal{ if }\dim(X)\leq \frac{k_1}{n},\\
        \frac{k_1}{n-1}(1-\lambda)+\lambda\dim(X)   & \textnormal{ if } \frac{k_1}{n}<\dim(X)\leq \frac{k_2}{n},\\
        (1-\lambda)\frac{k_1}{n-1} + \lambda \frac{k_2}{n-1} & \textnormal{ if } \dim(X)>\frac{k_2}{n}.
    \end{cases}$$
    Let $\lambda=\frac{a}{b}\in\mathbb{Q}$, with $a,b\in\mathbb{N}$, $\gcd(a,b)=1$. Then, clearly, for $\mu=b(n-1)$ we have that $\mu p_X\in\mathbb{Z}$ for every $X\in\mL(E)$. Hence, it makes sense to consider $\mu$-independent spaces. First, observe that all the subspaces $X\in\mL(E)_{\leq \frac{k_1}{n}}$ are $\mu$-independent, since they are strong independent spaces. Let $X\in\mL(E)$ be such that $\frac{k_1}{n}<\dim(X)\leq \frac{k_2}{n}$. Then all its subspaces of dimension at most $\frac{k_1}{n}$ are $\mu$-independent. Let $J\leq X$ be such that $\dim(J)>\frac{k_1}{n}$. Then 
    \begin{align*}
        p_J &= \frac{k_1}{n-1} -\frac{ak_1}{b(n-1)} + \frac{a\dim(J)}{b} \\
        &=\frac{k_1 b + a\dim(J)(n-1) - ak_1}{\mu} \geq \frac{\dim(J)}{\mu},
    \end{align*}
    where the last inequality follows from the fact that $b>a\geq 1$. Finally, we consider a subspace $J\in\mL(E)$ with $\dim(J)>\frac{k_2}{n}$. Then,
    \begin{align*}
        p_J&= \frac{k_1(b-a)}{b(n-1)} + \frac{ak_2}{b(n-1)}\\
        &=\frac{k_1b + a(k_2-k_1)}{\mu}\\
        &\geq \frac{k_2 + k_1(b-1)}{\mu}\\
        &\geq \frac{k_1+k_2}{\mu} \\
        &\geq \frac{n}{\mu}\geq \frac{\dim(J)}{\mu},
    \end{align*}
    where we use the fact that $b>a\geq 1$ and $n\leq k_1+k_2$.
\end{proof}

\begin{remark}
Due to the existence of MRD codes, there are codes satisfying the assumption of \Cref{thm: mu_indep_poly}, i.e. $1<k_1<k_2$ and $k_1+k_2\geq n$. For example, let $n=5$, $k_1=10$ and $k_2=15$. Then clearly $C_1$ is an $\F_q$-$[5\times 4, 10,3]$ MRD code and $C_2$ is an $\F_q$-$[5\times 4, 15,2]$ MRD code.
\end{remark}

We want to conclude this section by observing that it is not possible to identify the points in $\RP$ that correspond to representable $q$-polymatroids. This is because there is no axiomatic system for those $q$-polymatroids. However, we might be inclined to speculate that they are vertices of $\RP$. Unfortunately, the following example shows that this is not the case. 

\begin{example}
    Let $C$ be an $\F_2$-$[3\times 2,3,2]_2$ MRD code and let $\mM[C]$ be the $q$-polymatroid arising from it. Then upon fixing an ordering on $\mL(\F_{2}^3)_i$, for every $0\leq i\leq n$, we have that the corresponding point in $\mathcal{P}_2^3$ is $(p_X)_{X\in\mL(\F_2^3)}=(0,1,1,1,1,1,1,1, 3/2,3/2,3/2, 3/2,3/2,3/2,3/2,3/2)$, since the $q$-rank function of $\mM[C]$ is of the type described in  \Cref{cor:rank_function_special_MRD}. We can verify with the aid of the computer algebra system \textsc{OSCAR} that this is not a vertex of $\mathcal{P}_2^3$. However, there exist vertices which are representable. For example, we observe that the point 
    $$(0,1/2, 1, 1, 1, 1/2, 1, 1, 3/2, 1, 1, 3/2, 3/2, 3/2, 3/2, 3/2)$$
    is a vertex and it can be represented as the $q$-rank function of the $q$-polymatroid arising from the $\F_2$-$[3\times 2,3,1]$ rank-metric code generated by the matrices
    $$\begin{pmatrix}
        1 & 0\\
        0 & 0\\
        0 & 0
    \end{pmatrix}, \quad \begin{pmatrix}
        0 & 1\\
        0 & 1\\
        0 & 0
    \end{pmatrix}, \quad \begin{pmatrix}
        0 & 0\\
        1 & 1\\
        1 & 0
    \end{pmatrix}.$$
\end{example}


\section{Remarks and open questions}\label{sec: final_remarks}

In this paper, we introduced and initiated the study of the polytope of all $q$-rank functions.
Here, we gather some remarks and future research directions. 

\begin{enumerate}
    \item[(a)] We highlight that the same study can be done for polymatroids. Moreover, most of our treatment could also be extended to $\mL$-polymatroids and more generally to latroids. In particular, all the arguments that depend only on the $q$-rank function and hold for a general lattice will follow in the same way.
    \item[(b)] The dimension of $\RP$ is $\sum\limits_{i=1}^n\qqbin{n}{i}_q$, making it even difficult to perform computations with the computer. Moreover, we observe that $\RP$
    contains all the $q$-rank functions of $q$-polymatroids. However, we are not able to distinguish isomorphic classes. This leads to the following question: Is it possible to find a way to identify isomorphic classes in order to reduce the dimension of $\RP$? 
    \item[(c)] Currently, we are not aware of methods for describing all the vertices of $\RP$. In particular, while the integer vertices correspond to $q$-matroids, the rational vertices are still to be identified. We leave this investigation for future work. 
    \item[(d)] By inspiration from \cite{grabisch2019cone} one may also study the polytope of all the nullity functions of a ($q$-)matroid. Indeed, this is non-negative, bounded, non-decreasing and supermodular.
    \item[(e)] In \Cref{sec: Puiseux_polyn}, we have already pointed out that the characteristic Puiseux polynomial of a rational $q$-polymatroid behaves similarly to the characteristic polynomial of a $(q,r)$-polymatroid, when it comes to the lattice-theoretic properties of this polynomial (\Cref{rem: char_Puiseux_series}(d)). Nevertheless, the characteristic polynomial of a $(q,r)$-polymatroid has many properties in terms of independent spaces, circuits, flats, etc. So future work could be to generalize these properties to the rational $q$-polymatroid counterparts.
    \item[(f)] In \Cref{sec: Puiseux_polyn} we initiated the study of the characteristic polynomial of a rational convex combination of $q$-matroids. Here, the result in \Cref{prop: char_Puiseux_series_paving_conv_comb} covers only a special situation. This leads to the following questions: Can we describe the characteristic Puiseux polynomial in other situations? Can we say something about its properties, in general? Is it possible to express it only in terms of the characteristic polynomials of the summands?   
\end{enumerate}

\section*{Acknowledgments}
The authors are very grateful to the editor and the reviewers for their close reading and many constructive comments, especially for the suggestion of adding \Cref{prop:duality}.
The authors are thankful to Lukas K\"uhne and Jan Stricker for fruitful discussions. Moreover, they are grateful to Bernd Sturmfels for inspiring this work. The second author would like to especially thank Lukas K\"uhne for his supervision and guidance throughout the research process. The first author is supported by the ANR through grant n. ANR-24-CPJ1-0075-01. The second author is supported by the Deutsche Forschungsgemeinschaft (DFG, German Research Foundation) -- SFB-TRR 358/1 2023 -- 491392403 and SPP 2458 -- 539866293.

\bigskip
\bigskip
\bigskip

\bibliographystyle{abbrv}
\bibliography{references.bib}
\end{document}